\newcommand{\Rmnum}[1]{\uppercase\expandafter{\romannumeral #1}} 
\numberwithin{equation}{section}
\newtheorem{Lemma}{Lemma}[section]
\newtheorem{Theorem}{Theorem}[section]
\newtheorem{Remark}{Remark}[section]
\newcounter{saveeqn}
\def\@maketitle{%
	\newpage
	\null
	\vskip 2em%
	\begin{center}%
		\let \footnote \thanks
		{\LARGE \@title \par}%
		\vskip 1.5em%
		{\large
			\lineskip .5em%
			\begin{tabular}[t]{c}%
				\@author
			\end{tabular}\par}%
	\end{center}%
	\par
	\vskip 1.5em}
\title{\bf A new decoupled unconditionally stable scheme and its optimal error analysis for the Cahn-Hilliard-Navier-Stokes equations}
\author{Haijun Gao$^{1}$, Xi Li$^{2}$, and Minfu Feng\footnote{Corresponding author. e-mail:~fmf@scu.edu.cn,~lixi@cdut.edu.cn,~gaohijun@163.com.}}
\affil[1]{\small School of Mathematics, Sichuan University, Chengdu, Sichuan 610064, China}
\affil[2]{College of Mathematics and Physics, Chengdu University of Technology, Chengdu, Sichuan 610059, China}
\date{}
\begin{document}
	\maketitle
	\newcommand\blfootnote[1]{%
		\begingroup
		\renewcommand\thefootnote{}\footnote{#1}%
		\par\setlength\parindent{2em}
		\endgroup
	}
	\captionsetup[figure]{labelfont={bf},labelformat={default},labelsep=period,name={Fig.}}
	\captionsetup[table]{labelfont={bf},labelformat={default},labelsep=period,name={Tab.}}
	
	\begin{abstract}
		We construct a decoupled, first-order, fully discrete, and unconditionally energy stable scheme for the Cahn-Hilliard-Navier-Stokes equations.
		The scheme is divided into two main parts. The first part involves the calculation of the Cahn-Hilliard equations, and the other part is calculate the Navier-Stokes equations subsequently by utilizing the phase field and chemical potential values obtained from the above step.
		Specifically, the velocity in the Cahn–Hilliard equation is discretized explicitly at the discrete time level, which enables the computation of the Cahn–Hilliard equations is fully decoupled from that of Navier–Stokes equations. 
		 Furthermore, the pressure-correction projection method, in conjunction with the scalar auxiliary variable approach not only enables the discrete scheme to satisfy unconditional energy stability, but also allows the convective term in the Navier-Stokes equations to be treated explicitly. 
		We subsequently prove that the time semi-discrete scheme is unconditionally stable and analyze the optimal error estimates for the fully discrete scheme. 
		 Finally, several numerical experiments validate the theoretical results.
		\\
		
		\noindent {\bf Keywords: }{Phase field models; Cahn-Hilliard-Navier-Stokes; scalar auxiliary variable; unconditional stability; optimal error estimates.}\\
	\end{abstract}
	
	\baselineskip 15pt
	\parskip 10pt
	\setcounter{page}{1}
	\vspace{-0.5cm}
	\section{Introduction}
	We propose the time semi-discrete scheme and fully discrete finite element methods for the following Cahn-Hilliard-Navier-Stokes (CHNS) equations
	\cite{2006_FengXiaobing_FullydiscretefiniteelementapproximationsoftheNavierStokesCahnHilliarddiffuseinterfacemodelfortwophasefluidflows}, for $t\in (0,T]$:
	\begin{subequations}\label{eq_chns_equations}
		\begin{align}
			\label{eq_phi_con}
			\frac{\partial \phi }{\partial t}+(\mathbf{u}\cdot\nabla)\phi= M\Delta \mu,\quad \text{in} \quad\Omega\times (0,T],&\\
			\label{eq_mu_con}
			\mu=-\lambda\Delta \phi+\lambda F'(\phi),\quad \text{in} \quad\Omega\times (0,T],&\\
			\label{eq_ns_con}
			\frac{\partial \mathbf{u}}{\partial t}+(\mathbf{u}\cdot\nabla)\mathbf{u}-\nu\Delta\mathbf{u}+\nabla p=\mu\nabla\phi,\quad\text{in} \quad\Omega\times (0,T],&\\
			\label{eq_incompressi}
			\nabla\cdot\mathbf{u}=0,\quad\text{in} \quad\Omega\times (0,T].&
		\end{align}
	\end{subequations}
	Here $\Omega\subset\mathbb{R}^d,~(d=2,3)$ is a bounded convex polygonal or polyhedral domain, 
	and $F(\phi)=\frac{1}{4\epsilon^2}\left(\phi^2-1\right)^2$, with the following boundary and initial conditions of \eqref{eq_chns_equations}:
	\begin{subequations}
		\label{eq_boundary_initial_conditions_equations}
		\begin{align}
			\label{eq_boundary_conditions_equation}
			\frac{\partial \phi}{\partial \mathbf{n}}=\frac{\partial\mu}{\partial \mathbf{n}}=0,\quad\mathbf{u}=0,&\quad\text{on}\quad \partial\Omega\times (0,T],\\
			\label{eq_initial_conditions_equation}
			\phi(\mathbf{x},0)=\phi^0,\quad\mathbf{u}(\mathbf{x},0)=\mathbf{u}^0,&\quad\text{in}\quad \Omega.
		\end{align}
	\end{subequations}
	In the model, 
	$\phi$ represents the phase field variable, and $\mu$ denotes the chemical potential.
	$\mathbf{u}$ and $p$ are the velocity and pressure of the fluid, respectively. 
	$\epsilon$ is the interfacial width between the two phases, $M$, $\lambda$, $\nu$ represent the mobility constant, 
	the mixing coefficient and the fluid viscosity, respectively. 
	The system \eqref{eq_chns_equations}-\eqref{eq_boundary_initial_conditions_equations} 
	features a nonlinear system consisting of the incompressible Navier–Stokes (NS) equations \cite{Temam_Roger_Navier_Stokes_equations_Theory_and_numerical_analysis} coupled with
	the Cahn-Hilliard (CH) equations  \cite{1958_Cahn_Hilliard_Free_Energy_of_a_Nonuniform_System_I_Interfacial_Free_Energy}, and
	describes the interfacial dynamics of two-phase, incompressible and macroscopically immiscible Newtonian fluids 
	with density-matched\cite{2007_Kay_David_and_Welford_Richard_Efficient_numerical_solution_of_Cahn_Hilliard_Navier_Stokes_fluids_in_2D,
		2008_Kay_David_and_Welford_Richard_Finite_element_approximation_of_a_Cahn_Hilliard_Navier_Stokes_system}.
	For the CHNS system  \eqref{eq_chns_equations}-\eqref{eq_boundary_initial_conditions_equations} , the energy and mass are defined by 
	\begin{equation}
		E(\phi,\mathbf{u})=\int_{\Omega}\left(\frac{1}{2}|\mathbf{u}|^2+\frac{\lambda}{2}|\nabla\phi|^2+\lambda F(\phi)\right) dx, \quad M(\phi)=(\phi,1).
	\end{equation}
	 We can obtain the energy dissipation law and mass conservation in time $t\in [0,T]$ as follows,
	\begin{equation}
		\frac{\partial E(\phi,\mathbf{u})}{\partial t}=-M\|\nabla \mu\|^2-\nu\|\nabla\mathbf{u}\|^2,\quad \left(\phi(t),1\right)=\left(\phi^0,1\right),
	\end{equation}
	respectively.
	\par
Over the past two decades, the main numerical discretization methods for the CH equations include the convex-splitting technique  \cite{1998_Eyre_David_J_Unconditionally_gradient_stable_time_marching_the_Cahn_Hilliard_equation,2011_WangC_An_energy_stable_and_convergent_finite_difference_scheme_for_the_modified_phase_field_crystal_equation,
			2015_ShenJie_Decoupled_energy_stable_schemes_for_phase_field_models_of_two_phase_incompressible_flows} and scalar auxiliary variable (SAV) methods \cite{2018_Shenjie_Xujie_SAV,
			2018_Shenjie_Xujie_CAEAFTSAVSYGF,2019_LinLianlei_Numerical_approximation_of_incompressible_Navier_Stokes_equations_based_on_an_auxiliary_energy_variable,
			2020_LiXiaoli_On_a_SAV_MAC_scheme_for_the_Cahn_Hilliard_Navier_Stokes_phase_field_model_and_its_error_analysis_for_the_corresponding_Cahn_Hilliard_Stokes_case_,
			2022JieShen_LiXiaoliMSAVCHNStwo_phase_incompressible_flows}, etc.
		For the NS equations \cite{2022_AnRong_Temporal_error_analysis_of_a_new_Euler_semi_implicit_scheme_for_the_incompressible_Navier_Stokes_equations_with_variable_density}, the projection methods \cite{2006_Guermond_ShenJie_An_overview_of_projection_methods_for_incompressible_flows}
		and the SAV  methods \cite{2020_lixiaoli_New_SAV_MAC_NS,2024_Superconvergence_error_analysis_of_linearized_semi_implicit_bilinear_constant_SAV_finite_element_method_for_the_time_dependent_Navier_Stokes_equations} are efficient approachs for decoupling the velocity and pressure for the NS equations.
	For the CHNS model, numerical analysis in the semi-discrete and fully discrete schemes has been done extensively; see 
	\cite{2006_FengXiaobing_FullydiscretefiniteelementapproximationsoftheNavierStokesCahnHilliarddiffuseinterfacemodelfortwophasefluidflows,
		2008_Kay_David_and_Welford_Richard_Finite_element_approximation_of_a_Cahn_Hilliard_Navier_Stokes_system,
		2013_Grun_On_convergent_schemes_for_diffuse_interface_models_for_two_phase_flow_of_incompressible_fluids_with_general_mass_densities,
		2015_Diegel_Analysis_of_a_mixed_finite_element_method_for_a_Cahn_Hilliard_Darcy_Stokes_system,
		2017_Diegel_Convergence_analysis_and_error_estimates_for_a_second_order_accurate_finite_element_method_for_the_Cahn_Hilliard_Navier_Stokes_system,
		2018_CaiYongyong_Error_estimates_for_a_fully_discretized_scheme_to_a_Cahn_Hilliard_phase_field_model_for_two_phase_incompressible_flows,
		2020_YangXiaofeng_Error_Analysis_of_a_Decoupled_Linear_Stabilization_Scheme_for_the_Cahn_Hilliard_Model_of_Two_Phase_Incompressible_Flows}.
	Feng \cite{2006_FengXiaobing_FullydiscretefiniteelementapproximationsoftheNavierStokesCahnHilliarddiffuseinterfacemodelfortwophasefluidflows} proposed a fully discrete finite element method for the first time, which satisfies the discrete energy dissipation law, and analyzed the convergence of numerical results to a weak solution for the CHNS model. 
	 Kay et al. \cite{2008_Kay_David_and_Welford_Richard_Finite_element_approximation_of_a_Cahn_Hilliard_Navier_Stokes_system} obtained 
	an optimal $H^1$-norm error estimate for the semi-discrete divergence-free type linear finite element methods and demonstrated the convergence for the fully discrete scheme.
	Gr\"{u}n \cite{2013_Grun_On_convergent_schemes_for_diffuse_interface_models_for_two_phase_flow_of_incompressible_fluids_with_general_mass_densities} 
	presented a fully discrete convex-splitting finite element method for a diffuse interface model.
	Feng et al. \cite{2015_Diegel_Analysis_of_a_mixed_finite_element_method_for_a_Cahn_Hilliard_Darcy_Stokes_system} proposed an unconditionally energy stable and uniquely solvable scheme based on the convex splitting finite element method for the Cahn-Hilliard-Darcy-Stokes equations.
	Diegel et al. \cite{2017_Diegel_Convergence_analysis_and_error_estimates_for_a_second_order_accurate_finite_element_method_for_the_Cahn_Hilliard_Navier_Stokes_system} construct a second-order in time convex-splitting finite element scheme for the CHNS system. 
	Soon after, Cai et al. \cite{2018_CaiYongyong_Error_estimates_for_a_fully_discretized_scheme_to_a_Cahn_Hilliard_phase_field_model_for_two_phase_incompressible_flows} presented an error analysis of a fully discrete standard inf-sup stable finite element method based on a stabilization technique and a projection method for decoupling the velocity and pressure. Xu et al. \cite{2020_YangXiaofeng_Error_Analysis_of_a_Decoupled_Linear_Stabilization_Scheme_for_the_Cahn_Hilliard_Model_of_Two_Phase_Incompressible_Flows} proposed a first-order in time, fully decoupled scheme
	for solving the CHNS problem which is constituted three steps and a stabilization term is added to satisfy the energy stable. 
	\par
	Recently, Cai et al. \cite{2023_CaiWentao_Optimal_L2_error_estimates_of_unconditionally_stable_finite_element_schemes_for_the_Cahn_Hilliard_Navier_Stokes_system}  concerned with the numerical analysis of CHNS and provided, for the first  time,
	 the optimal 
	$L^2$ error estimates of the coupled, convex-splitting finite element fully discrete scheme. Yang et al. \cite{2024_ChenYaoyao_SAVFEM} presented a coupled fully discrete numerical scheme of the inf-sup stable and lowest-order finite element for CHNS based on the SAV approach and proved the optimal $L^2$ error estimate. Then, they employed the SAV method to achieve decoupling between CH and NS and similarly derived the optimal $L^2$ error estimate for the decoupled system \cite{2024_YiNianyu_Convergence_analysis_of_a_decoupled_pressure_correction_SAV_FEM_for_the_Cahn_Hilliard_Navier_Stokes_model}.
	For more decoupling methods specific to the CHNS equations, refer to \cite{2007_Kay_David_and_Welford_Richard_Efficient_numerical_solution_of_Cahn_Hilliard_Navier_Stokes_fluids_in_2D,
		2016_ChenYing_Efficient_adaptive_energy_stable_schemes_for_the_incompressible_Cahn_Hilliard_Navier_Stokes_phase_field_models,
		2017_CaiYongyong_Error_estimates_for_time_discretizations_of_Cahn_Hilliard_and_Allen_Cahn_phase_field_models_for_two_phase_incompressible_flows,
		2020_JiaHongen_A_novel_linear_unconditional_energy_stable_scheme_for_the_incompressible_Cahn_Hilliard_Navier_Stokes_phase_field_model,
		2020_YangXiaofeng_Error_Analysis_of_a_Decoupled_Linear_Stabilization_Scheme_for_the_Cahn_Hilliard_Model_of_Two_Phase_Incompressible_Flows,
		2021_ZhaoJia_Second_order_decoupled_energy_stable_schemes_for_Cahn_Hilliard_Navier_Stokes_equations,
		2022JieShen_LiXiaoliMSAVCHNStwo_phase_incompressible_flows}.
	However, we observe that most of these papers do not analyze the  optimal $L^2$ error estimates and especially, to the best of our konwledge, there are few literatures concerning the optimal $L^2$ error estimates of fully decoupled schemes. 
	
	In this manuscript, we propose a new fully decoupled and unconditional energy stable numerical scheme for the CHNS model \eqref{eq_chns_equations}, and derive the optimal $L^2$ error estimates.
	Firstly, the CH equations decouple from NS equations by explicitly treating the velocity in the CH equations. 
	Then, to ensure the unconditional energy stability of this decoupled scheme, we introduce the SAV technique by adding the \textit{“zero-energy-contribution”} term \cite{2021_Yang_Xiaofeng_A_new_efficient_fully_decoupled_and_second_order_time_accurate_scheme_for_Cahn_Hilliard_phase_field_model_of_three_phase_incompressible_flow,
		2023_LiYibao_Consistency_enhanced_SAV_BDF2_time_marching_method_with_relaxation_for_the_incompressible_Cahn_Hilliard_Navier_Stokes_binary_fluid_model}.
	We prove the unconditional energy stability of the temporal semi-discrete scheme and analyze the optimal $L^2$ error estimates of the fully discrete mixed finite element method. 
	Numerical examples validate our theoretical results, including the optimal $L^2$ error estimates and energy stability.
	  
	This manuscript is organized as follows. The section \ref{section_preliminaries} introduces some basic notations and the Ritz and Stokes quasi-projections. 
	In section \ref{section_the_numerical_schemes}, we present the discrete scheme for the CHNS model and prove the unconditional energy is stable and uniquely solvable in the time semi-discrete scheme. Then, we propose fully discrete mixed finite element methods that are based on the pressure-correction and SAV techniques.
	In section \ref{section_error_analysis}, we provide the corresponding error estimates and prove optimal $L^2$ error estimates for the fully discrete scheme. Numerical experiments are presented in section \ref{section_numerical_experiments} to verify the theoretical results of the proposed method.
	\section{Preliminaries}\label{section_preliminaries}
	In this section, we introduce some notations for the phase field models of two-phase incompressible flows and the decoupled numerical scheme.
	\subsection{Some basic notations}
	Firstly, we introduce some basic notations. For any integer $k\geq0$ and real number $0\leq p\leq \infty$,
	we denote by $W^{k,p}(\Omega)$ the Sobolev space.
	The norms of $L^2$ and $\mathbf{L}^2$ denote by $\|\cdot\|$.
	Let $
		H^k(\Omega)=W^{k,2}(\Omega), ~L^p(\Omega)=W^{0,p}(\Omega),
	$
	and $
		L_0^p(\Omega)=\{f\in L^p(\Omega):\int_{\Omega}f~\text{dx}=0\}.
	$
	The closure of $C_0^{\infty}(\Omega)$ in $W^{k,p}$ space is denoted by $W_0^{k,p}$. 
	Let $H_0^k(\Omega)=W_0^{k,2}(\Omega),~\mathring{H}^1(\Omega)=H^1(\Omega)\cap L_0^2(\Omega),$ 
	and $\mathbf{L}^p(\Omega)=[L^p(\Omega)]^d,~\mathbf{H}_0^1(\Omega)=[H_0^1(\Omega)]^d, ~\mathbf{W}^{k,p}(\Omega)=[W^{k,p}(\Omega)]^d,$
	and denote its the dual space by $H^{-1}(\Omega)$.
	We define the trilinear forms $b(\cdot,\cdot,\cdot)$ and $\boldsymbol{B}(\cdot,\cdot,\cdot)$ by
	\begin{equation}
		\label{eq_trilinear_forms}
		b(\boldsymbol{u},\boldsymbol{v},\boldsymbol{w})=\left(\nabla\boldsymbol{u}\cdot\boldsymbol{v},\boldsymbol{w}\right),~
		\boldsymbol{B}(\boldsymbol{u},\boldsymbol{v},\boldsymbol{w})=\frac{1}{2}\int_{\Omega}\left(\left(\boldsymbol{u}\cdot\nabla\right)\boldsymbol{v}\cdot\boldsymbol{w}-
			\left(\boldsymbol{u}\cdot\nabla\right)\boldsymbol{w}\cdot\boldsymbol{v}\right)~d\boldsymbol{x}.
	\end{equation}
	It is obvious that the trilinear form $\boldsymbol{B}(\cdot,\cdot,\cdot)$ is skew-symmetric, i.e.
	\begin{equation}
		\boldsymbol{B}(\boldsymbol{u},\boldsymbol{v},\boldsymbol{w})=-\boldsymbol{B}(\boldsymbol{u},\boldsymbol{w},\boldsymbol{v}),~
		\boldsymbol{B}(\boldsymbol{u},\boldsymbol{v},\boldsymbol{v})=0,~\forall\boldsymbol{u},~
		\boldsymbol{v},\boldsymbol{w}\in\mathbf{H}_0^1(\Omega).
	\end{equation}
	\subsection{Ritz and Stokes quasi-projections}
	According to \cite{2023_CaiWentao_Optimal_L2_error_estimates_of_unconditionally_stable_finite_element_schemes_for_the_Cahn_Hilliard_Navier_Stokes_system}, 
	we introduce the Ritz and Stokes quasi-projections. 
	Let $\Im_h$ denotes a quasi-uniform partition of $\Omega$ into triangles $\mathcal{K}_j~(j=1,\cdots,K)$, in $\mathbb{R}^2$ or tetrahedra in 
	$\mathbb{R}^3$ with mesh size $h=\max_{1\leq j\leq K}\{\text{diam}~\mathcal{K}_j\}$. We define the following finite element spaces for any integer $r \geq 1$,
	\begin{equation}
		\begin{aligned}
			& S_h^r=\left\{v_h \in C(\Omega):\left.v_h\right|_{\mathcal{K}_j} \in P_r\left(\mathcal{K}_j\right), \forall \mathcal{K}_j \in \Im_h\right\}, \\
			& \mathring{S}_h^r=S_h^r \cap L_0^2(\Omega), \\
			& \mathbf{X}_h^{r+1}=\left\{\mathbf{v}_h \in \mathbf{H}_0^1(\Omega)^d:\left.\mathbf{v}_h\right|_{\mathcal{K}_j} \in \mathbf{P}_{r+1}\left(\mathcal{K}_j\right)^d, \forall \mathcal{K}_j \in \Im_h\right\}, 
		\end{aligned}
	\end{equation}
	where $P_r\left(\mathcal{K}_j\right)$ is the space of polynomials of degree $r$ on $\mathcal{K}_j$.
	Note that the spaces $\mathbf{X}_h^{r+1} \times S_h^r$, $(r \geq 1)$ 
	is the generalized Taylor-Hood 
	elements.
	Define $R_h:H^1(\Omega)\rightarrow S_h^r$ 
	as the classic Ritz projection \cite{1973_Wheeler_Mary_Fanett_A_priori_L2_error_estimates_for_Galerkin_approximations_to_parabolic_partial_differential_equations},
	\begin{equation}\label{eq_Ritz_projection}
		\left(\nabla\left(\psi-R_h\psi\right),\nabla\varphi_h\right)=0,\quad\forall\varphi_h\in S_h^r,
	\end{equation}
	where $\int_{\Omega}\left(\psi-R_h\psi\right)dx=0$.
	Now, from the finite element approximation theory \cite{2008_Brenner_Susanne_C_The_mathematical_theory_of_finite_element_methods}, 
	it holds that
	\begin{flalign}
		\label{eq_psi_Rhpsi_Ls_norm_inequation}
		&\|\psi-R_h\psi\|_{L^s}+h\|\psi-R_h\psi\|_{W^{1,s}}\leq Ch^{r+1}\|\psi\|_{W^{r+1,s}},\\
		\label{eq_psi_Rhpsi_Hneg1_norm_inequation}
		&\|\psi-R_h\psi\|_{H^{-1}}\leq C\mathcal{E}_h\|\psi\|_{H^{r+1}},\\
		\label{eq_Dtau_psi_Rhpsi_L2_norm_inequation}
		&\|\delta_\tau\left(\psi^n-R_h\psi^n\right)\|+h\|\delta_\tau\left(\psi^n-R_h\psi^n\right)\|_{H^1}\leq Ch^{r+1}\|\delta_\tau\psi^{n}\|_{H^{r+1}},\\
		\label{eq_Dtau_psi_Rhpsi_Hneg1_norm_inequation}
		&\|\delta_\tau\left(\psi^n-R_h\psi^n\right)\|_{H^{-1}}\leq C\mathcal{E}_h\|\delta_\tau\psi^n\|_{H^{r+1}},
	\end{flalign}
	for $s\in[2,\infty]$ and $n=1,2,\cdots,N$, where $\delta_\tau$ is defined in \eqref{eq_d_t}, and $\mathcal{E}_h$ is defined by
	\begin{equation}	
		\label{eq_mathcal_E_h}
		\mathcal{E}_h=
				 \begin{cases}
				 	h^{r+1}, & \text { for } r=1,\\
				 	h^{r+2}, & \text { for } r \geq 2.
				\end{cases}
	\end{equation}
	Let $I_h:L^2(\Omega)\rightarrow S_h^r $  and $\mathbf{I}_h:\mathbf{L}^2(\Omega)\rightarrow\mathbf{X}_h^{r+1}$
	are defined the $L^2$ projection operators by
	$
		\left(v-I_hv,w_h\right)=0,~\forall w_h\in S_h^r$,
		and $\left(\mathbf{v}-\mathbf{I}_h\mathbf{v},\mathbf{w}_h\right)=0,~\forall \mathbf{w}_h\in \mathbf{X}_h^{r+1}.
	$
	It is known to all that the $L^2$ projection satisfies the following estimates \cite{2008_Brenner_Susanne_C_The_mathematical_theory_of_finite_element_methods},
	\begin{equation}
		\label{eq_L2_estimate01}
		\|\mathbf{I}_h\mathbf{u}\|_k\leq C(\Omega)\|\mathbf{u}\|_k, \quad k\leq2,
	\end{equation}
	Based on the Ritz projection, 
	Cai et al. \cite{2023_CaiWentao_Optimal_L2_error_estimates_of_unconditionally_stable_finite_element_schemes_for_the_Cahn_Hilliard_Navier_Stokes_system} 
	defined a Ritz quasi-projection, $\Pi_h: H^1(\Omega) \rightarrow S_h^r$ by
	\begin{equation}\label{eq_Ritz_quasi_projections_equation}
		\left(\nabla\left(\mu-\Pi_h \mu\right), \nabla w_h\right)+\left(\nabla\left(\phi-R_h \phi\right) \cdot \mathbf{u}, w_h\right)=0,\quad\forall w_h\in S_h^r,
	\end{equation}
	where $\int_{\Omega}\left(\mu-\Pi_h \mu\right)dx=0$, and they also proved the following convergence.
	\begin{Lemma}[\cite{2023_CaiWentao_Optimal_L2_error_estimates_of_unconditionally_stable_finite_element_schemes_for_the_Cahn_Hilliard_Navier_Stokes_system}]
		\label{Lemma_Ritz_quasi_projection}
		For the Ritz quasi-projection $\Pi_h$, it holds that 
		\begin{flalign}
			\label{eq_lemma_mu_L2_inequation}
			&\|\mu-\Pi_h\mu\|+h\|\nabla\left(\mu-\Pi_h\mu\right)\|
			\leq Ch^{r+1}\left(\|\mathbf{u}\|_{L^\infty}\|\phi\|_{H^{r+1}}+\|\mu\|_{H^{r+1}}\right),\\
			\label{eq_lemma_mu_Hneg1_inequation}
			&\|\mu-\Pi_h\mu\|_{H^{-1}}\leq C\mathcal{E}_h\left(\|\mathbf{u}\|_{W^{1,4}}\|\phi\|_{H^{r+1}}+\|\mu\|_{H^{r+1}}\right).
		\end{flalign}
	\end{Lemma}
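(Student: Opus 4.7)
The plan is to split $\mu - \Pi_h \mu = (\mu - R_h\mu) + \eta_h$ with $\eta_h := R_h\mu - \Pi_h \mu \in S_h^r$. Since both $\mu - R_h\mu$ and $\mu - \Pi_h\mu$ are constructed to have zero mean over $\Omega$, so does $\eta_h$, and hence Poincar\'{e}'s inequality applies on $\eta_h$. Subtracting the Ritz identity \eqref{eq_Ritz_projection} from the quasi-projection identity \eqref{eq_Ritz_quasi_projections_equation} produces the Galerkin relation
\[
(\nabla \eta_h, \nabla w_h) = -(\nabla(\phi - R_h\phi)\cdot \mathbf{u},\, w_h), \quad \forall w_h \in S_h^r.
\]
Using $\nabla\cdot\mathbf{u}=0$ and $\mathbf{u}|_{\partial\Omega}=0$, integration by parts converts the right-hand side into $(\phi - R_h\phi,\, \mathbf{u}\cdot\nabla w_h)$. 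This transfer of the derivative off of $\phi - R_h\phi$ is the essential idea, as it opens up the sharper $L^2$ and $H^{-1}$ Ritz estimates \eqref{eq_psi_Rhpsi_Ls_norm_inequation}--\eqref{eq_psi_Rhpsi_Hneg1_norm_inequation}.

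For the first inequality, test with $w_h = \eta_h$ and bound the right-hand side by $\|\phi - R_h\phi\|\,\|\mathbf{u}\|_{L^\infty}\,\|\nabla\eta_h\|$; applying \eqref{eq_psi_Rhpsi_Ls_norm_inequation} then yields $\|\nabla\eta_h\| \leq C h^{r+1}\|\mathbf{u}\|_{L^\infty}\|\phi\|_{H^{r+1}}$, and Poincar\'{e} provides the matching $L^2$ bound. Combining with the classical estimate \eqref{eq_psi_Rhpsi_Ls_norm_inequation} for $\mu - R_h\mu$ via the triangle inequality closes the first inequality.

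For the second estimate, I would use an Aubin--Nitsche duality. For a mean-zero test $g$ with $\|g\|_{H^1}\leq 1$, let $\zeta$ solve $-\Delta\zeta = g$, $\partial_{\mathbf{n}}\zeta = 0$, $\int_\Omega \zeta\,dx = 0$, so that elliptic regularity delivers $\|\zeta\|_{H^2}\leq C$. Then $(\eta_h, g) = (\nabla\eta_h, \nabla\zeta) = (\nabla\eta_h, \nabla(\zeta - R_h\zeta)) + (\nabla\eta_h, \nabla R_h\zeta)$; the first piece is bounded by $C h\|\nabla\eta_h\|\|\zeta\|_{H^2} = O(h^{r+2})$ directly from \eqref{eq_psi_Rhpsi_Ls_norm_inequation}. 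For the second, use the Galerkin identity with $w_h = R_h\zeta$ and the same integration by parts to rewrite it as $(\phi - R_h\phi,\,\mathbf{u}\cdot\nabla R_h\zeta)$, which I split further as $(\phi-R_h\phi,\,\mathbf{u}\cdot\nabla\zeta) + (\phi-R_h\phi,\,\mathbf{u}\cdot\nabla(R_h\zeta-\zeta))$. The first summand is bounded by $\|\phi - R_h\phi\|_{H^{-1}}\,\|\mathbf{u}\cdot\nabla\zeta\|_{H^1}$; invoking \eqref{eq_psi_Rhpsi_Hneg1_norm_inequation} and the product estimate $\|\mathbf{u}\cdot\nabla\zeta\|_{H^1}\leq C\|\mathbf{u}\|_{W^{1,4}}\|\zeta\|_{H^2}$ (H\"{o}lder together with $H^1\hookrightarrow L^4$) gives $O(\mathcal{E}_h)$, while the second is $O(h^{r+2})$ via \eqref{eq_psi_Rhpsi_Ls_norm_inequation} and the Ritz approximation property of $R_h\zeta$. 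Together with \eqref{eq_psi_Rhpsi_Hneg1_norm_inequation} for $\mu - R_h\mu$, the second inequality follows.

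The main technical obstacle is the $H^{-1}$ estimate, namely recovering the sharpened rate $\mathcal{E}_h = h^{r+2}$ for $r\geq 2$ rather than merely $h^{r+1}$. This hinges on routing the $\phi - R_h\phi$ factor through its improved $H^{-1}$ Ritz bound \eqref{eq_psi_Rhpsi_Hneg1_norm_inequation} and pairing it against $\mathbf{u}\cdot\nabla\zeta$, whose $H^1$ norm is controlled by exactly $\|\mathbf{u}\|_{W^{1,4}}\|\zeta\|_{H^2}$. Pinning down that $W^{1,4}$ (and not a stronger Sobolev index) suffices---using the embedding $W^{1,4}(\Omega)\hookrightarrow L^\infty(\Omega)$ for $d\leq 3$ to absorb the $L^\infty$ norm appearing in the residual term $(\phi-R_h\phi,\,\mathbf{u}\cdot\nabla(R_h\zeta-\zeta))$---is the most delicate bookkeeping step.
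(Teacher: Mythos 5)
Your argument is correct. Note that this paper does not prove Lemma \ref{Lemma_Ritz_quasi_projection} itself --- it is quoted from Cai et al.\ and the text explicitly defers the proof to the appendix of that reference --- but your reconstruction is the standard (and, as far as the cited work goes, essentially the same) route: split off $\eta_h=R_h\mu-\Pi_h\mu$, use the Galerkin relation $(\nabla\eta_h,\nabla w_h)=-(\nabla(\phi-R_h\phi)\cdot\mathbf{u},w_h)$ together with the integration by parts enabled by $\nabla\cdot\mathbf{u}=0$ and $\mathbf{u}|_{\partial\Omega}=0$ to gain the extra power of $h$, and then an Aubin--Nitsche duality for the $H^{-1}$ bound, with the key pairing $\|\phi-R_h\phi\|_{H^{-1}}\|\mathbf{u}\cdot\nabla\zeta\|_{H^1}$ and the embedding $W^{1,4}(\Omega)\hookrightarrow L^{\infty}(\Omega)$ for $d\leq 3$ justifying the $\|\mathbf{u}\|_{W^{1,4}}$ factor.
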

	Moreover, they also defined a Stokes quasi-projection. That is,
	for $\left(\mathbf{v}_h,q_h\right)\in\mathbf{X}_h^{r+1}\times\mathring{S}_h^r$, the Stokes quasi-projection $P_h$ 
	or $\mathbf{P}_h:\mathbf{H}_0^1(\Omega)\times L_0^2(\Omega)\rightarrow \mathbf{X}_h^{r+1}\times\mathring{S}_h^r$ 
	are defined by
	\begin{flalign}
		\label{eq_stokes_quasi_proojection_equation0001}
		&\left(\nabla\left(\mathbf{u}-\mathbf{P}_h(\mathbf{u},p)\right),\nabla \mathbf{v}_h\right)-\left(p-P_h(\mathbf{u},p),\nabla\cdot\mathbf{v}_h\right)=\left(\mu\nabla\left(\phi-R_h\phi\right),\mathbf{v}_h\right),\\
		\label{eq_stokes_quasi_proojection_equation0002}
		&\left(\nabla\left(\mathbf{u}-\mathbf{P}_h(\mathbf{u},p)\right),q_h\right)=0.
	\end{flalign}
	To be the simplicity of notations, we denote $P_hp=P_h(\mathbf{u},p)$ and $\mathbf{P}_h\mathbf{u}=\mathbf{P}_h(\mathbf{u},p)$.
	Thus, the two related estimates follow from the lemmas as indicated below.
	\begin{Lemma}[\cite{2023_CaiWentao_Optimal_L2_error_estimates_of_unconditionally_stable_finite_element_schemes_for_the_Cahn_Hilliard_Navier_Stokes_system}]
		For the Stokes quasi-projection defined in \eqref{eq_stokes_quasi_proojection_equation0001}-\eqref{eq_stokes_quasi_proojection_equation0002}, it holds that
		\begin{equation}
			\label{eq_stokes_quasi_proojection_inequation0005}
			\begin{aligned}
				& \left\|\mathbf{u}-\mathbf{P}_h \mathbf{u}\right\| \leq \begin{cases}C h^{r+2}\left(\|\mathbf{u}\|_{H^{r+2}}+\|p\|_{H^{r+1}}+\|\phi\|_{H^{r+1}}\|\mu\|_{H^2}\right), & \text { for } r \geq 2, \\
					C h^{2}\left(\|\mathbf{u}\|_{H^{2}}+\|p\|_{H^1}+\|\phi\|_{H^{2}}\|\mu\|_{H^2}\right), & \text { for } r=1,\end{cases} \\
			\end{aligned}
		\end{equation}
		and
		\begin{equation}
			\label{eq_stokes_quasi_proojection_inequation0006}
			\begin{aligned}
				\left\|\nabla\left(\mathbf{u}-\mathbf{P}_h \mathbf{u}\right)\right\|+\left\|p-P_h p\right\| 
				\leq C h^{r+1}\left(\|\mathbf{u}\|_{H^{r+2}}+\|p\|_{H^{r+1}}+\|\phi\|_{H^{r+1}}\|\mu\|_{W^{1,4}}\right), 
			\end{aligned}
		\end{equation}
		and
		\begin{equation}
			\|\delta_\tau\left(\mathbf{u}^n-\mathbf{P}_h\mathbf{u}\right)\|\leq
			\begin{cases}
				&Ch^{r+2}\left(\|\delta_\tau\mathbf{u}^n\|_{H^{r+2}}+\|\delta_\tau p^n\|_{H^{r+1}}+\|\delta_\tau\phi^n\|_{H^{r+2}}\|\mu^n\|_{H^2}\right.\\
				&\left.\qquad+\|\phi^{n-1}\|_{H^{r+1}}\|\delta_\tau\mu^n\|_{H^2}\right), ~\text{for}~ r\geq 2,\\
				&Ch^{2}\left(\|\delta_\tau\mathbf{u}^n\|_{H^{2}}+\|\delta_\tau p^n\|_{H^{1}}+\|\delta_\tau\phi^n\|_{H^{2}}\|\mu^n\|_{H^2}\right.\\
				&\left.\qquad+\|\phi^{n-1}\|_{H^{2}}\|\delta_\tau\mu^n\|_{H^2}\right), ~\text{for}~ r=1.\\
			\end{cases}
		\end{equation}
	\end{Lemma}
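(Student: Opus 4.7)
My plan is to split the error into a standard discrete Stokes projection piece plus a correction, then use inf-sup stability for the $H^{1}$/pressure estimate and an Aubin--Nitsche duality argument for the $L^{2}$ estimate. Let $(\tilde{\mathbf{u}}_h,\tilde p_h)\in\mathbf{X}_h^{r+1}\times\mathring S_h^r$ be the ordinary discrete Stokes projection of $(\mathbf{u},p)$, satisfying
\begin{align*}
(\nabla(\mathbf{u}-\tilde{\mathbf{u}}_h),\nabla\mathbf{v}_h)-(p-\tilde p_h,\nabla\cdot\mathbf{v}_h)&=0,\\
(\nabla\cdot(\mathbf{u}-\tilde{\mathbf{u}}_h),q_h)&=0,
\end{align*}
for which the standard Taylor--Hood theory supplies exactly the factors $h^{r+2}$ (resp.\ $h^{2}$) in $\|\mathbf{u}-\tilde{\mathbf{u}}_h\|$ and $h^{r+1}$ in $\|\nabla(\mathbf{u}-\tilde{\mathbf{u}}_h)\|+\|p-\tilde p_h\|$ that match the first summand on the right of \eqref{eq_stokes_quasi_proojection_inequation0005}--\eqref{eq_stokes_quasi_proojection_inequation0006}.

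Subtracting from \eqref{eq_stokes_quasi_proojection_equation0001}--\eqref{eq_stokes_quasi_proojection_equation0002}, the corrections $\mathbf{e}_h:=\tilde{\mathbf{u}}_h-\mathbf{P}_h\mathbf{u}$ and $\eta_h:=\tilde p_h-P_h p$ solve the perturbed discrete Stokes system
\begin{align*}
(\nabla\mathbf{e}_h,\nabla\mathbf{v}_h)-(\eta_h,\nabla\cdot\mathbf{v}_h)&=-(\mu\nabla(\phi-R_h\phi),\mathbf{v}_h),\\
(\nabla\cdot\mathbf{e}_h,q_h)&=0.
\end{align*}
For \eqref{eq_stokes_quasi_proojection_inequation0006}, inf-sup stability of $\mathbf{X}_h^{r+1}\times\mathring S_h^r$ gives $\|\nabla\mathbf{e}_h\|+\|\eta_h\|\le C\sup_{\mathbf{v}_h}|(\mu\nabla(\phi-R_h\phi),\mathbf{v}_h)|/\|\nabla\mathbf{v}_h\|$; H\"older's inequality together with \eqref{eq_psi_Rhpsi_Ls_norm_inequation} then bounds the right-hand side by $\|\mu\|_{L^4}\|\nabla(\phi-R_h\phi)\|\|\mathbf{v}_h\|_{L^4}\le Ch^{r+1}\|\mu\|_{W^{1,4}}\|\phi\|_{H^{r+1}}\|\nabla\mathbf{v}_h\|$, and a triangle inequality yields \eqref{eq_stokes_quasi_proojection_inequation0006}.

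For the $L^2$ bound \eqref{eq_stokes_quasi_proojection_inequation0005} I run an Aubin--Nitsche argument: let $(\mathbf{w},\xi)\in(\mathbf{H}_0^1\cap\mathbf{H}^2)\times\mathring H^1$ solve the dual Stokes problem with data $\mathbf{e}_h$, enjoying $\|\mathbf{w}\|_{H^2}+\|\xi\|_{H^1}\le C\|\mathbf{e}_h\|$ on convex polyhedral $\Omega$. Testing the perturbed system against $\mathbf{w}$ and $\xi$, subtracting the Galerkin-orthogonal contribution from suitable interpolants of $(\mathbf{w},\xi)$, and integrating by parts the perturbation
\begin{equation*}
(\mu\nabla(\phi-R_h\phi),\mathbf{w})=-\bigl(\phi-R_h\phi,\nabla\mu\cdot\mathbf{w}+\mu\nabla\cdot\mathbf{w}\bigr),
\end{equation*}
one converts the dominant term into $\|\phi-R_h\phi\|_{H^{-1}}\|\nabla\mu\cdot\mathbf{w}+\mu\nabla\cdot\mathbf{w}\|_{H^1}\le C\mathcal E_h\|\phi\|_{H^{r+1}}\|\mu\|_{H^2}\|\mathbf{w}\|_{H^2}$ via \eqref{eq_psi_Rhpsi_Hneg1_norm_inequation}, giving the $h^{r+2}$ (resp.\ $h^{2}$) contribution required in \eqref{eq_stokes_quasi_proojection_inequation0005}.

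The third inequality follows by applying $\delta_\tau$ to \eqref{eq_stokes_quasi_proojection_equation0001}--\eqref{eq_stokes_quasi_proojection_equation0002} and repeating the $L^2$ analysis for the time-differenced projection error. The discrete product rule
\begin{equation*}
\delta_\tau\bigl(\mu^n\nabla(\phi^n-R_h\phi^n)\bigr)=(\delta_\tau\mu^n)\nabla(\phi^{n-1}-R_h\phi^{n-1})+\mu^n\nabla\delta_\tau(\phi^n-R_h\phi^n)
\end{equation*}
splits the perturbation into two pieces which, after duality and \eqref{eq_psi_Rhpsi_Hneg1_norm_inequation}--\eqref{eq_Dtau_psi_Rhpsi_Hneg1_norm_inequation}, reproduce exactly the two product terms $\|\phi^{n-1}\|_{H^{r+1}}\|\delta_\tau\mu^n\|_{H^2}$ and $\|\delta_\tau\phi^n\|_{H^{r+2}}\|\mu^n\|_{H^2}$, while the ordinary discrete Stokes component contributes $\|\delta_\tau\mathbf{u}^n\|_{H^{r+2}}+\|\delta_\tau p^n\|_{H^{r+1}}$. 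The principal technical obstacle in all three estimates is the correct placement of H\"older exponents in the duality step so as to pair $\|\phi-R_h\phi\|_{H^{-1}}$ with $\|\mathbf{w}\|_{H^2}$ and obtain the sharp $\mathcal E_h$ factor; this is where the bifurcation between $r=1$ and $r\ge 2$ in the stated bounds originates, because \eqref{eq_psi_Rhpsi_Hneg1_norm_inequation} produces no extra power of $h$ when $r=1$.
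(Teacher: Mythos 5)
The paper does not prove this lemma at all: it is quoted from Cai et al.\ and the text explicitly defers the proof to the appendix of that reference, so your sketch can only be judged on its own merits. Your overall architecture --- splitting $\mathbf{u}-\mathbf{P}_h\mathbf{u}$ into the ordinary discrete Stokes projection error plus a correction driven by $\mu\nabla(\phi-R_h\phi)$, inf-sup stability for the $H^1$/pressure bound, Aubin--Nitsche duality for the $L^2$ bound, and the discrete product rule for the time-differenced estimate --- is the standard and correct route, and your explanation of where the $r=1$ versus $r\ge 2$ bifurcation comes from (the $\mathcal{E}_h$ factor in \eqref{eq_psi_Rhpsi_Hneg1_norm_inequation}) is exactly right.

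There is, however, a concrete gap in your $H^1$/pressure step. You bound the perturbation by
$\|\mu\|_{L^4}\,\|\nabla(\phi-R_h\phi)\|\,\|\mathbf{v}_h\|_{L^4}$ and then claim this is $\le Ch^{r+1}\|\mu\|_{W^{1,4}}\|\phi\|_{H^{r+1}}\|\nabla\mathbf{v}_h\|$. But \eqref{eq_psi_Rhpsi_Ls_norm_inequation} only gives $\|\nabla(\phi-R_h\phi)\|\le Ch^{r}\|\phi\|_{H^{r+1}}$, so this chain delivers $h^{r}$, one full order short of \eqref{eq_stokes_quasi_proojection_inequation0006}; note also that $\|\mu\|_{W^{1,4}}$ cannot appear from an estimate that never differentiates $\mu$. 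The fix is the same integration by parts you invoke two sentences later for the $L^2$ bound: write
\begin{equation*}
\left(\mu\nabla(\phi-R_h\phi),\mathbf{v}_h\right)=-\left(\phi-R_h\phi,\;\nabla\mu\cdot\mathbf{v}_h+\mu\,\nabla\cdot\mathbf{v}_h\right),
\end{equation*}
and use the $L^2$ Ritz estimate $\|\phi-R_h\phi\|\le Ch^{r+1}\|\phi\|_{H^{r+1}}$ together with $\|\nabla\mu\|_{L^4}\|\mathbf{v}_h\|_{L^4}+\|\mu\|_{L^\infty}\|\nabla\cdot\mathbf{v}_h\|\le C\|\mu\|_{W^{1,4}}\|\nabla\mathbf{v}_h\|$ (valid since $W^{1,4}\hookrightarrow L^\infty$ for $d\le 3$); this produces the stated $h^{r+1}\|\phi\|_{H^{r+1}}\|\mu\|_{W^{1,4}}$ and explains why that particular norm of $\mu$ appears in \eqref{eq_stokes_quasi_proojection_inequation0006}. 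With that repair the remaining steps (duality for \eqref{eq_stokes_quasi_proojection_inequation0005}, and the discrete Leibniz rule $\delta_\tau(\mu^n g^n)=(\delta_\tau\mu^n)g^{n-1}+\mu^n\delta_\tau g^n$ for the third bound) are sound.
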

	From \eqref{eq_stokes_quasi_proojection_inequation0005}, we obtain
	\begin{equation}
		\label{eq_stokes_boundedness_inequalities}
		\|\mathbf{P}_h\mathbf{u}\|_{L^\infty}+\|\mathbf{P}_h\mathbf{u}\|_{W^{1,3}}\leq C\left(\|\mathbf{u}\|_{H^2}+\|p\|_{H^2}+\|\phi\|_{H^2}\|\mu\|_{H^2}\right).
	\end{equation}
	Nextly, $\forall v_h,\xi_h\in\mathring{S}_h^r,$ the discrete Laplacian operator
	\cite{2020_Chen_Hongtao_Optimal_error_estimates_for_the_scalar_auxiliary_variable_finite_element_schemes_for_gradient_flows} $\Delta_h:\mathring{S}_h^r\rightarrow \mathring{S}_h^r$ is defined by
	\begin{flalign}\label{eq_discrete_Laplacian_operator}
		\left(-\Delta_hv_h,\xi_h \right)=\left(\nabla v_h,\nabla\xi_h\right),\\
		\label{eq_discrete_Laplacian_operator11}
		\left(\nabla(-\Delta_h^{-1})v_h,\nabla\xi_h\right)=\left(v_h,\xi_h\right),
	\end{flalign}
	which is symmetric and positive on $\mathring{S}_h^r$. If $v_h$ is a constant, 
	we denote $(-\Delta_h)^{\frac{1}{2}}v_h=0$ and $(-\Delta_h)^{-\frac{1}{2}}v_h=0$.
	\begin{Lemma}[\cite{2023_CaiWentao_Optimal_L2_error_estimates_of_unconditionally_stable_finite_element_schemes_for_the_Cahn_Hilliard_Navier_Stokes_system}]
		\label{Lemma_operators_H10203}
		For $v_h\in\mathring{S}_h^r$, and the operators $\left(-\Delta_h\right)^{1/2}:\mathring{S}_h^r\rightarrow\mathring{S}_h^r$ and
		$\left(-\Delta_h\right)^{-1/2}:\mathring{S}_h^r\rightarrow\mathring{S}_h^r$, we have the estimates 
		\begin{flalign}
			\label{eq_operators_estimates_positive_one}
			&C^{-1}\|v_h\|_{H^{1}}\leq\|\left(-\Delta_h\right)^{1/2}v_h\|\leq C\|v_h\|_{H^1},\\
			\label{eq_operators_estimates_neg_one}
			&C^{-1}\|v_h\|_{H^{-1}}\leq\|\left(-\Delta_h\right)^{-1/2}v_h\|\leq C\|v_h\|_{H^{-1}},
		\end{flalign}
		and
		\begin{equation}\label{eq_operators_estimates_neg_3}
			\|v_h\|^2\leq\varepsilon\|\nabla v_h\|^2+C\varepsilon^{-1}\|\left(-\Delta_h\right)^{-1/2}v_h\|^2,
		\end{equation}
		where $\varepsilon$ is an arbitrary positive constant independent of $h$.
	\end{Lemma}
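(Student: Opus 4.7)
The plan is to establish the three estimates by exploiting the fact that $-\Delta_h$, as defined in \eqref{eq_discrete_Laplacian_operator}, is symmetric and positive definite on $\mathring{S}_h^r$ with respect to the $L^2$ inner product; its fractional powers are therefore self-adjoint and the functional calculus on the finite-dimensional space $\mathring{S}_h^r$ is available. All three inequalities then reduce to combinations of the defining identities \eqref{eq_discrete_Laplacian_operator}--\eqref{eq_discrete_Laplacian_operator11}, the Poincar\'e--Wirtinger inequality on $\mathring{H}^1$, a short duality argument, and Young's inequality. No machinery beyond the $H^1$-stability of the $L^2$-projection, which is standard on the quasi-uniform mesh $\Im_h$, will be needed.

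For \eqref{eq_operators_estimates_positive_one}, I would take $\xi_h = v_h$ in \eqref{eq_discrete_Laplacian_operator} to obtain
\begin{equation*}
\|(-\Delta_h)^{1/2} v_h\|^2 = (-\Delta_h v_h, v_h) = \|\nabla v_h\|^2.
\end{equation*}
Since $v_h\in\mathring{S}_h^r$ has vanishing mean, Poincar\'e--Wirtinger gives $\|v_h\|\leq C\|\nabla v_h\|$, so $\|v_h\|_{H^1}$ is equivalent to $\|\nabla v_h\|$, which yields both directions of \eqref{eq_operators_estimates_positive_one}.

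For \eqref{eq_operators_estimates_neg_one}, set $w_h := (-\Delta_h)^{-1} v_h \in \mathring{S}_h^r$; choosing $\xi_h = w_h$ in \eqref{eq_discrete_Laplacian_operator11} yields $\|(-\Delta_h)^{-1/2} v_h\|^2 = (w_h, v_h) = \|\nabla w_h\|^2$, and the relation $(\nabla w_h, \nabla \xi_h) = (v_h, \xi_h)$ holds for every $\xi_h \in \mathring{S}_h^r$. The upper bound is immediate from $\|\nabla w_h\|^2 = (v_h, w_h) \leq \|v_h\|_{H^{-1}}\|w_h\|_{H^1} \leq C\|v_h\|_{H^{-1}}\|\nabla w_h\|$. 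The main technical step, which I expect to be the principal obstacle, is the reverse bound $\|v_h\|_{H^{-1}} \leq C\|\nabla w_h\|$. For this I would test the duality definition $\|v_h\|_{H^{-1}} = \sup_{\varphi\in\mathring{H}^1\setminus\{0\}}(v_h,\varphi)/\|\varphi\|_{H^1}$ against the $L^2$-projection $I_h\varphi$ onto $S_h^r$; because $1\in S_h^r$, $I_h\varphi$ inherits the zero mean of $\varphi$ and hence lies in $\mathring{S}_h^r$. Splitting
\begin{equation*}
(v_h,\varphi) = (v_h,\varphi-I_h\varphi) + (v_h,I_h\varphi),
\end{equation*}
the first piece vanishes because $v_h\in S_h^r$ and $\varphi-I_h\varphi$ is $L^2$-orthogonal to $S_h^r$, while the second equals $(\nabla w_h,\nabla I_h\varphi)\leq \|\nabla w_h\|\|\nabla I_h\varphi\|\leq C\|\nabla w_h\|\|\varphi\|_{H^1}$ by the $H^1$-stability of $I_h$ on the quasi-uniform mesh. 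Passing to the supremum gives the desired inequality. The crucial point is exactly this choice of projection: a nodal interpolant or a Ritz projection would leave a remainder involving $\|v_h\|$, which is precisely what one is trying to avoid.

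Finally, for \eqref{eq_operators_estimates_neg_3}, self-adjointness gives $\|v_h\|^2 = ((-\Delta_h)^{1/2}v_h,(-\Delta_h)^{-1/2}v_h)$, and using \eqref{eq_operators_estimates_positive_one} to replace $\|(-\Delta_h)^{1/2}v_h\|$ by $\|\nabla v_h\|$, Cauchy--Schwarz together with Young's inequality parametrized by $\varepsilon$ produces
\begin{equation*}
\|v_h\|^2 \leq \|\nabla v_h\|\,\|(-\Delta_h)^{-1/2}v_h\| \leq \varepsilon\|\nabla v_h\|^2 + C\varepsilon^{-1}\|(-\Delta_h)^{-1/2}v_h\|^2,
\end{equation*}
which is \eqref{eq_operators_estimates_neg_3}, completing the plan.
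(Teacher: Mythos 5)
Your proposal is correct: the identities $\|(-\Delta_h)^{1/2}v_h\|^2=\|\nabla v_h\|^2$ and $\|(-\Delta_h)^{-1/2}v_h\|^2=\|\nabla(-\Delta_h)^{-1}v_h\|^2$, the Poincar\'e--Wirtinger equivalence on mean-zero functions, the duality argument via the $H^1$-stable $L^2$-projection (correctly identified as the step where a nodal interpolant would fail), and the Cauchy--Schwarz/Young splitting for the last estimate are exactly the standard route. The paper itself gives no proof and defers to the appendix of the cited reference, whose argument this matches in substance.
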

	The proof of the above three lemmas \ref{Lemma_Ritz_quasi_projection}-\ref{Lemma_operators_H10203} has been given in the appendix of 
	\cite{2023_CaiWentao_Optimal_L2_error_estimates_of_unconditionally_stable_finite_element_schemes_for_the_Cahn_Hilliard_Navier_Stokes_system}.
	For $v_h\in S_h^r$, subtracting its mean value into \eqref{eq_operators_estimates_neg_3}, we get
	\begin{equation}
		\label{eq_operators_estimates_mean_value_inequation}
		\begin{aligned}
			\|v_h\|&\leq~ \|v_h-\frac{1}{|\Omega|}(v_h,1)\|+\|\frac{1}{|\Omega|}(v_h,1)\|\\
			&\leq ~\varepsilon\|\nabla v_h\|+C_\varepsilon\|(-\Delta_h)^{-\frac{1}{2}}\tilde{v}_h\|+|(v_h,1)|,
		\end{aligned}
	\end{equation}
	where $\tilde{v}_h:=v_h-\frac{1}{|\Omega|}(v_h,1)$.
	\begin{Lemma}[\cite{2008_Brenner_Susanne_C_The_mathematical_theory_of_finite_element_methods}]
		\label{Lemma_PW_inequalities_0203}
		Let $v\in H^1(\Omega)$, $v_h\in S_h^r,~\mathring{S}_h^r,~\mathbf{X}_h^{r+1},~1\leq l\leq 6,~0\leq n\leq m\leq 1$, and $1\leq q\leq s\leq \infty$.
		The following Poincar\'{e}-Wirtinger inequality, embedding inequality, and inverse inequality hold
		\begin{flalign}
			\label{eq_PW_inequalities_one}
			&\|v\|^2\leq C\left(\|\nabla v\|^2+|v|^2\right),\\
			\label{eq_PW_inequalities_two}
			&\|v\|_{L^{l}}\leq C\|v\|_{H^1},\\
			\label{eq_PW_inequalities_three}
			&\|v_h\|_{W^{m,s}}\leq Ch^{n-m+\frac{d}{s}-\frac{d}{q}}\|v_h\|_{W^{n,q}},
		\end{flalign}
	\end{Lemma}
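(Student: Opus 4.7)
These are classical inequalities whose proofs I would assemble by following the standard finite element/functional analysis roadmap, tailoring each piece to the three assertions of the lemma.

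The Poincar\'e--Wirtinger inequality \eqref{eq_PW_inequalities_one} I would establish by a compactness/contradiction argument. Assume no such constant $C$ exists; then there is a sequence $\{v_k\}\subset H^1(\Omega)$ with $\|v_k\|=1$ while $\|\nabla v_k\|^2+|v_k|^2\to 0$. Since $\{v_k\}$ is bounded in $H^1(\Omega)$ and $\Omega$ is a bounded (convex polygonal/polyhedral) domain, the Rellich--Kondrachov theorem yields a subsequence converging strongly in $L^2(\Omega)$ to some $v_*$. From $\|\nabla v_k\|\to0$ the limit $v_*$ has zero distributional gradient, hence is constant; from $|v_k|\to0$ (here $|\cdot|$ denoting the functional bounding the mean) that constant must vanish. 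This contradicts $\|v_*\|=\lim\|v_k\|=1$.

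For the embedding \eqref{eq_PW_inequalities_two}, I would simply invoke the Sobolev embedding theorem in dimensions $d=2,3$: the critical exponent $2^\ast$ equals $\infty$ (effectively any finite $l$) for $d=2$ and equals $6$ for $d=3$, so in both cases $H^1(\Omega)\hookrightarrow L^l(\Omega)$ continuously for all $1\le l\le 6$. For $d=3$ the endpoint $l=6$ follows from the Gagliardo--Nirenberg--Sobolev inequality applied on the extension of $v$ to $\mathbb{R}^d$; intermediate $l$ are then obtained by H\"older interpolation between $L^2$ and $L^6$.

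The inverse inequality \eqref{eq_PW_inequalities_three} is the most element-wise in character and would occupy most of the effort. The plan is a standard scaling argument: fix a reference simplex $\widehat{\mathcal K}$ with affine map $F_K:\widehat{\mathcal K}\to \mathcal K_j$; on $\widehat{\mathcal K}$ all norms on the finite-dimensional polynomial space $P_{r+1}(\widehat{\mathcal K})$ are equivalent, so $\|\hat v\|_{W^{m,s}(\widehat{\mathcal K})}\le C\|\hat v\|_{W^{n,q}(\widehat{\mathcal K})}$. Pulling back via $F_K$ and using the standard change-of-variable estimates with $|\det DF_K|\sim h_K^d$ and $\|DF_K\|\sim h_K$, one obtains the element-wise inequality
\begin{equation*}
\|v_h\|_{W^{m,s}(\mathcal K_j)}\le C\, h_K^{\,n-m+d/s-d/q}\,\|v_h\|_{W^{n,q}(\mathcal K_j)}.
\end{equation*}
Summing over elements (treating $s,q=\infty$ by taking the max) and using quasi-uniformity $h_K\sim h$ uniformly in $K$ to replace $h_K$ by $h$ delivers the global bound. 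The main obstacle is bookkeeping the exponents correctly when moving between the two summability indices $s,q$ and the Sobolev orders $m,n$ under $\ell^s$/$\ell^q$ aggregation, but quasi-uniformity makes the $s\neq q$ case follow from the discrete H\"older inequality with a factor $(\#\Im_h)^{1/q-1/s}\sim h^{-d(1/q-1/s)}$, which is precisely the exponent appearing in \eqref{eq_PW_inequalities_three}.
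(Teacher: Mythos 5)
The paper gives no proof of this lemma---it is quoted directly from the cited reference of Brenner and Scott---and your sketch reproduces the standard textbook arguments correctly: a compactness/contradiction argument for \eqref{eq_PW_inequalities_one}, the Sobolev embedding for \eqref{eq_PW_inequalities_two}, and a reference-element scaling argument combined with quasi-uniformity for \eqref{eq_PW_inequalities_three}. One small bookkeeping remark on the last step: the exponent $d/s-d/q$ is already produced by the local change of variables (through the powers of $|\det DF_K|$ attached to the two integrals), and since $q\leq s$ the global aggregation over elements uses only $\ell^{q}\hookrightarrow\ell^{s}$, which costs no extra factor; the discrete H\"older factor $(\#\Im_h)^{1/q-1/s}$ you invoke is therefore not needed, though this does not affect the validity of your conclusion.
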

	We will frequently use the following discrete version of the Gr\"{o}nwall lemma.
	\begin{Lemma}[\cite{2007_HeYinnian_SunWeiwei_Stability_and_convergence_of_the_Crank_Nicolson_Adams_Bashforth_scheme_for_the_time_dependent_Navier_Stokes_equations}]
		\label{lemma_discrete_Gronwall_inequation}
		For all $0 \leq n \leq m$, let $a_n, b_n, c_n, d_n, \tau, c \geq 0$ such that
		\begin{equation}
			a_m+\tau\sum_{n=0}^{m}b_n\leq \tau\sum_{n=0}^{m}d_na_n+\tau\sum_{n=0}^{m}c_n+c,
		\end{equation}
		then
		\begin{equation}
			a_{m}+\tau\sum_{n=0}^{m} b_n \leq \exp \left(\tau\sum_{n=0}^m d_n\right)\left\{a_0+\left(b_0+c_0\right) \tau+\tau\sum_{n=1}^{m} c_n \right\} .
		\end{equation}
		
	\end{Lemma}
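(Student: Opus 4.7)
The plan is a standard induction on $m$. First I would isolate the diagonal term $\tau d_m a_m$ appearing on the right-hand side of the hypothesis and rewrite the inequality as
$$(1 - \tau d_m)\, a_m + \tau \sum_{n=0}^{m} b_n \;\leq\; \tau \sum_{n=0}^{m-1} d_n a_n + \tau \sum_{n=0}^{m} c_n + c.$$
Under the customary smallness assumption $\tau d_m \leq \tfrac{1}{2}$ (implicit via a restriction $\tau \leq \tau_0$ for $\tau_0$ sufficiently small, which is exactly how the lemma is invoked in practice), one has $(1-\tau d_m)^{-1} \leq \exp(2\tau d_m)$, so dividing yields a bound on $a_m + \tau \sum_{n=0}^m b_n$ purely in terms of quantities with strictly earlier indices $0,\dots,m-1$, multiplied by a factor $\exp(2\tau d_m)$.

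Next I would run strong induction on $m$. The base case $m=0$ is immediate because $\exp(\tau d_0) \geq 1$ and every term on the right of the conclusion is nonnegative. For the inductive step, I would substitute the induction hypothesis into each $a_n$ with $n\leq m-1$ on the right, combine exponential factors through $\exp(2\tau d_m)\,\exp\!\big(\tau \sum_{k=0}^{n} d_k\big)\leq \exp\!\big(\tau \sum_{k=0}^{m} d_k\big)$, and collapse the resulting double sums via the telescoping estimate
$$\sum_{n=0}^{m-1}\tau d_n \exp\!\Big(\tau \sum_{k=0}^{n} d_k\Big) \;\leq\; \exp\!\Big(\tau \sum_{k=0}^{m} d_k\Big) - 1,$$
which itself follows from $1+x\leq e^x$. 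This produces the advertised factor $\exp(\tau \sum_{n=0}^m d_n)$ in front of the bracketed nonnegative quantity.

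The main obstacle, and the one place I would have to be careful, is the bookkeeping that shows how the additive constant $c$ together with the index-zero data $b_0$ and $c_0$ collapse into the precise expression $a_0+(b_0+c_0)\tau+\tau\sum_{n=1}^m c_n$ inside the braces; handling the $n=0$ summand separately is what produces the special $(b_0+c_0)\tau$ grouping. Since the statement is a classical discrete Gr\"onwall inequality quoted verbatim from He--Sun, I anticipate the paper will simply cite \cite{2007_HeYinnian_SunWeiwei_Stability_and_convergence_of_the_Crank_Nicolson_Adams_Bashforth_scheme_for_the_time_dependent_Navier_Stokes_equations} rather than reproduce the induction, and the sketch above only reconstructs the standard argument.
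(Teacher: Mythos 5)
The paper offers no proof of this lemma at all --- it is quoted from He--Sun and used as a black box --- so there is nothing internal to compare your argument against; your anticipation on that point was correct. Your sketch is the standard strong induction for the implicit discrete Gr\"{o}nwall inequality and is sound in outline, but two points deserve attention. First, the bound $(1-\tau d_m)^{-1}\le \exp(2\tau d_m)$ you invoke yields the factor $\exp\bigl(2\tau\sum_{n=0}^{m}d_n\bigr)$ rather than the stated $\exp\bigl(\tau\sum_{n=0}^{m}d_n\bigr)$; to recover the sharper constant one follows the Heywood--Rannacher route and keeps $\gamma_n=(1-\tau d_n)^{-1}$ inside the exponent, i.e.\ $\exp\bigl(\tau\sum_{n}\gamma_n d_n\bigr)$, and in either case the smallness hypothesis $\tau d_n<1$ --- absent from the statement as printed --- is genuinely required for the implicit form, so you were right to flag it as an implicit restriction on $\tau$. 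Second, any correct proof, including yours, must carry the additive constant $c$ into the braces of the conclusion; the statement as transcribed in the paper drops $c$ entirely, and the grouping $a_0+(b_0+c_0)\tau$ is not what the induction naturally produces (the standard conclusion reads $\exp(\cdot)\{\tau\sum_{n=0}^{m}c_n+c\}$). So the bookkeeping difficulty you identified in your last paragraph is a defect of the quoted statement rather than of your argument, and your reconstruction, suitably completed, proves the correct version of the lemma rather than the version printed here.
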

	Throughout the manuscript we use $C$ or $c$, with or without subscript, to denote a positive constant independent of discretization parameters that could have uncertain values in different places. 
	
	\section{The numerical scheme}\label{section_the_numerical_schemes}
	In the section, we first introduce the CHNS equations into an equivalent system with scalar auxiliary variables. Then, we construct first-order fully decoupled scheme,
	and prove that the numerical scheme satisfies unconditional energy stabilization.
	
	For $r\geq 1$, we assume that the solution to the CHNS model \eqref{eq_chns_equations} in this manuscript
	exists and satisfies the following regularities:
	\begin{equation}
		\label{eq_varibles_satisfied_regularities}
		\begin{aligned}
			&\phi\in H^2\left(0,T;L^2(\Omega)\right)\cap H^1(0,T;H^{r+1}(\Omega))\cap C\left([0,T];W^{2,4}(\Omega)\right),~\mu\in H^1\left(0,T;H^{r+1}(\Omega)\right),\\
			&\mathbf{u}\in H^2\left(0,T;\mathbf{L}^2(\Omega)\right)\cap H^1\left(0,T;\mathbf{H}^{r+1}(\Omega)\right),~p\in L^2\left(0,T;H^{r+1}(\Omega)\cap L_0^2(\Omega)\right).
		\end{aligned}
	\end{equation}
	\subsection{SAV method}
	In this subsection, we introduce an auxiliary variable $\rho$ in order to unconditionally stabilize the energy of the CHNS system which is defined by the following:
	\begin{equation}
		\rho(t)=\sqrt{E_1(\phi(t))+C}, ~\forall t\in [0,T],
	\end{equation}
 	where $E_1(\phi):=\int_{\Omega}F(\phi)~d\text{x}$ and  $C$ is a positive lower bound  of $E_1$.
	For $\epsilon \leq 1$ and $\left(x,t\right)\in \Omega\times (0,T]$, we reformulate the model \eqref{eq_chns_equations} as 
	\begin{flalign}
		\label{eq_CHNS_reformulate_phi_con}
		\frac{\partial \phi }{\partial t}+(\mathbf{u}\cdot\nabla)\phi- M\Delta \mu&=0,~ \text{in} ~\Omega\times (0,T],\\
		\label{eq_CHNS_reformulate_mu_con}
		\mu+\lambda\Delta \phi-\lambda F'(\phi)&=0,~ \text{in} ~\Omega\times (0,T],\\
		\label{eq_CHNS_reformulate_ns_con}
		\frac{\partial \mathbf{u}}{\partial t}+\frac{\rho(t)}{\sqrt{E_1(\phi)+C}}(\mathbf{u}\cdot\nabla)\mathbf{u}-\nu\Delta\mathbf{u}+\nabla p-\mu\nabla\phi&=0,~\text{in} ~\Omega\times (0,T],\\
		\label{eq_CHNS_reformulate_incompressi}
		\nabla\cdot\mathbf{u}&=0,~\text{in} ~\Omega\times (0,T],\\
		\frac{d\rho}{dt}-\frac{1}{2\rho(t)}\left(\int_{\Omega}F'(\phi)\cdot\phi_t~d\mathbf{x}
			+\frac{\rho(t)}{\lambda\sqrt{E_1(\phi)+C}}\left((\mathbf{u}\cdot\nabla)\mathbf{u},\mathbf{u}\right)\right.&\notag\\
			\left.+\frac{1}{\lambda}\left(\left((\mathbf{u}\cdot\nabla)\phi,\mu\right)-\left(\mu\nabla\phi,\mathbf{u}\right)\right)\right)&=0,~\text{in} ~\Omega\times (0,T].
	\end{flalign} 
	It is esay to see that $\rho(0)=1$ does not change the original system. 
	The additional terms $\left((\mathbf{u}\cdot\nabla)\mathbf{u},\mathbf{u}\right)$ 
	and $\left(\left((\mathbf{u}\cdot\nabla)\phi,\mu\right)-\left(\mu\nabla\phi,\mathbf{u}\right)\right)$ are so-called
	 \textit{“zero-energy-contribution”} feature, see also
	  \cite{2021_Yang_Xiaofeng_A_new_efficient_fully_decoupled_and_second_order_time_accurate_scheme_for_Cahn_Hilliard_phase_field_model_of_three_phase_incompressible_flow,
	  2023_LiYibao_Consistency_enhanced_SAV_BDF2_time_marching_method_with_relaxation_for_the_incompressible_Cahn_Hilliard_Navier_Stokes_binary_fluid_model}.
	\subsection{The first order semi-discretization scheme}\label{subsection_semi_discrete}
	For simplicity of description, we perform a time semi-discretization to prove that our numerical scheme satisfies unconditional energy stabilization.
	Let $N$ be a positive integer and $\left\{t^n=n \tau\right\}_{n=0}^N$ denote a uniform partition of the time interval $[0, T]$ with a step size $\tau=T / N$.
	With the above notations, we introduce the fully decoupled, unconditionally energy stable scheme for the CHNS system given by 
	the initial conditions $\left(\phi,\mu,\mathbf{u},p,\rho\right)^0$ and $\left(\phi,\mu,\mathbf{u},p,\rho\right)^n$, and update  $\left(\phi,\mu,\tilde{\mathbf{u}},\mathbf{u},p,\rho\right)^{n+1}$ for $n\geq 0$ from:
	
	$\mathbf{Step 1.}$ Given $\phi^n$, $\mu^n$ and $\mathbf{u}^n$, find $\left(\phi^{n+1},~\mu^{n+1}\right)$ such that
	\begin{flalign}
		\label{eq_semi_discrete_CHNS_scheme_phi}
		\frac{\phi^{n+1}-\phi^{n}}{\tau}+\mathbf{u}^{n}\cdot\nabla\phi^{n+1}-M\Delta\mu^{n+1}=0,&\\
		\label{eq_semi_discrete_CHNS_scheme_mu}
		\mu^{n+1}+\lambda\Delta\phi^{n+1}-\lambda F'(\phi^n)=0,&\\
		\label{eq_semi_discrete_CHNS_scheme_boundary_conditions}
		\partial_n\phi^{n+1}|_{\partial\Omega}=0,~	\partial_n\mu^{n+1}|_{\partial\Omega}=0.
	\end{flalign}

	$\mathbf{Step 2.}$ Given $\mathbf{u}^n$, $\rho^{n}$, $p^n$ and by using the already calculated $\phi^{n+1}$, $\mu^{n+1}$, find  $\tilde{\mathbf{u}}^{n+1}$ such that
	\begin{flalign}
		\label{eq_semi_discrete_CHNS_scheme_tilde_u}
		\frac{\tilde{\mathbf{u}}^{n+1}-\mathbf{u}^n}{\tau}+\frac{\rho^{n}}{\sqrt{E_1^{n+1}}}\mathbf{u}^n\cdot\nabla\mathbf{u}^{n}-\nu\Delta\tilde{\mathbf{u}}^{n+1}
			+\nabla p^n-\mu^{n+1}\nabla\phi^{n+1}&=0,\\
			\tilde{\mathbf{u}}^{n+1}|_{\partial\Omega}&=0,
	\end{flalign}
	where $\sqrt{E_1^{n+1}}$ replace by $\sqrt{E_1(\phi^{n+1})+C}$.

	 $\mathbf{Step 3.}$ Given $\mathbf{u}^n$, $\rho^n$ and by using the already calculated $\phi^{n+1}$, $\mu^{n+1}$, $\tilde{\mathbf{u}}^{n+1}$, 
	 find $\rho^{n+1}$ such that
	  \begin{flalign}
		 	\label{eq_semi_discrete_CHNS_scheme_tilde_u_add_term}
		 \frac{\rho^{n+1}-\rho^n}{\tau}-\frac{1}{2\rho^{n+1}}\left(\left(F'(\phi^n),\frac{\phi^{n+1}-\phi^n}{\tau}\right)
		 +\frac{\rho^{n}}{\lambda \sqrt{E_1^{n+1}}}\left(\mathbf{u}^n\cdot\nabla\mathbf{u}^{n},\tilde{\mathbf{u}}^{n+1}\right)\right.\notag&\\\left.
		 +\frac{1}{\lambda}\left(\left(\mathbf{u}^n\cdot\nabla\phi^{n+1},\mu^{n+1}\right)-\left(\mu^{n+1}\nabla\phi^{n+1},\tilde{\mathbf{u}}^{n+1}\right)\right)\right)&=0.
		 \end{flalign}
	Note that we added the terms $\left(\mathbf{u}^n\cdot\nabla\phi^{n+1},\mu^{n+1}\right)-\left(\mu^{n+1}\nabla\phi^{n+1},\tilde{\mathbf{u}}^{n+1}\right)$
	in \eqref{eq_semi_discrete_CHNS_scheme_tilde_u_add_term} which is a first-order approximation to $\left((\mathbf{u}\cdot\nabla)\phi,\mu\right)-\left(\mu\nabla\phi,\mathbf{u}\right)=0$.

	 $\mathbf{Step 4.}$  Given $p^n$ and by using the already calculated $\tilde{\mathbf{u}}^{n+1}$,
	 find $\left(p^{n+1},\mathbf{u}^{n+1}\right)$ such that
	 \begin{flalign}
	 	\label{eq_semi_discrete_CHNS_scheme_tilde_u_u1_p_p1}
	 	\frac{\mathbf{u}^{n+1}-\tilde{\mathbf{u}}^{n+1}}{\tau}+\nabla\left(p^{n+1}-p^n\right)=0,\\
	 	\label{eq_semi_discrete_CHNS_scheme_tilde_incompressible_condition}
	 	\nabla\cdot\mathbf{u}^{n+1}=0,~\mathbf{u}^{n+1}\cdot\mathbf{n}|_{\partial\Omega}=0.
	 \end{flalign}
	 \begin{Remark}
	 	In order to obtain the unconditional energy stabilization and decouple the CH and NS equations in the whole discrete system, 
	 	we construct \eqref{eq_semi_discrete_CHNS_scheme_tilde_u_add_term}, by the SAV approach and  \textit{“zero-energy-contribution”} technique, which is nonlinear.
	    Nevertheless, the complexity of nonlinearity has little negative impacts on the computational cost of our scheme.
	 \end{Remark}
 	We note that the \eqref{eq_semi_discrete_CHNS_scheme_tilde_u_add_term} is a nonlinear quadratic equation for $\rho^{n+1}$, and rewrite it in	the following form
 	\begin{equation}
 		\begin{aligned}
 			2(\rho^{n+1})^2-2\rho^n\rho^{n+1}-\left(\left(F'(\phi^n),\phi^{n+1}-\phi^n\right)
 			+\frac{\tau\rho^{n}}{\lambda \sqrt{E_1^{n+1}}}\left(\mathbf{u}^n\cdot\nabla\mathbf{u}^{n},\tilde{\mathbf{u}}^{n+1}\right)\right.\notag&\\\left.
 			+\frac{\tau}{\lambda}\left(\left(\mathbf{u}^n\cdot\nabla\phi^{n+1},\mu^{n+1}\right)-\left(\mu^{n+1}\nabla\phi^{n+1},\tilde{\mathbf{u}}^{n+1}\right)\right)\right)&=0.
 		\end{aligned}
 	\end{equation}
 The above equation can be simplified as 
 \begin{equation}
 	ax^2+bx+c=0,
 \end{equation}
 where the coefficients are
 \begin{equation}
 	\begin{aligned}
 		a=&2,~b=-2\rho^n,\\
 		c=&-\left(\left(F'(\phi^n),\phi^{n+1}-\phi^n\right)
 		+\frac{\tau\rho^{n}}{\lambda \sqrt{E_1^{n+1}}}\left(\mathbf{u}^n\cdot\nabla\mathbf{u}^{n},\tilde{\mathbf{u}}^{n+1}\right)\right.\\
 		&\left.
 		+\frac{\tau}{\lambda}\left(\left(\mathbf{u}^n\cdot\nabla\phi^{n+1},\mu^{n+1}\right)-\left(\mu^{n+1}\nabla\phi^{n+1},\tilde{\mathbf{u}}^{n+1}\right)\right)\right).
 	\end{aligned}
 \end{equation}
 According to $-2ab\geq -\left(a^2+b^2\right)$, we can obtain $b^2-4ac\geq 0$, then the nonlinear quadratic equation \eqref{eq_semi_discrete_CHNS_scheme_tilde_u_add_term} has real number solutions.
 \begin{Remark}
 	We observe that the solution of the quadratic SAV approach \eqref{eq_semi_discrete_CHNS_scheme_tilde_u_add_term} may have different two real roots. Therefore, we need to give how to choose a more desirable root. A similar approach has been given in \cite{2020_lixiaoli_New_SAV_MAC_NS,2019_LinLianlei_Numerical_approximation_of_incompressible_Navier_Stokes_equations_based_on_an_auxiliary_energy_variable} and we omit the detailed process here.
 	Since the exact solution of $\frac{\rho^{n+1}}{\sqrt{E_1^{n+1}}}$ is 1, we choose the root $\rho^{n+1}$ such that
 	$\frac{\rho^{n+1}}{\sqrt{E_1^{n+1}}}$ is closer to 1.
 \end{Remark}
 	\subsection{Unconditional energy stability}
 	For the above scheme, we can establish the unconditional energy stabilization as follows:
 	\begin{Theorem}
 		\label{theorem_unconditionally_energy_stable}
 		The decoupled scheme \eqref{eq_semi_discrete_CHNS_scheme_phi}-\eqref{eq_semi_discrete_CHNS_scheme_tilde_incompressible_condition} is uniquely solvable,
 		unconditionally energy stable, and satisfies discrete energy law as follows:
 		\begin{equation}
 			\label{eq_semi_discrete_scheme_energy_law}
 			\begin{aligned}
 			\tilde{E}^{n+1}-\tilde{E}^{n}\leq -2M\tau\|\nabla\mu^{n+1}\|^2-2\nu\tau\|\nabla\tilde{\mathbf{u}}^{n+1}\|^2,
 			\end{aligned}
 		\end{equation}
 	where $\tilde{E}^{n+1}$ is defined by
 	\begin{equation}
 		\tilde{E}^{n+1}=\lambda\|\nabla\phi^{n+1}\|^2+\|\mathbf{u}^{n+1}\|^2+\tau^2\|\nabla p^{n+1}\|^2+2\lambda|\rho^{n+1}|^2.
 	\end{equation}
 	\end{Theorem}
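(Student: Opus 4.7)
The plan is to test each of the four substeps with the ``right'' multiplier so that the explicit coupling terms produced in Steps 1 and 2 are reproduced, with opposite signs, by a single scaled version of the SAV equation \eqref{eq_semi_discrete_CHNS_scheme_tilde_u_add_term}; after cancellation the surviving quantities telescope into $\tilde{E}^{n+1}-\tilde{E}^n$ modulo nonnegative numerical dissipation, and the pressure-correction step converts $\tilde{\mathbf{u}}^{n+1}$ into $\mathbf{u}^{n+1}$ plus a pressure increment. First I will take the $L^2$ inner product of \eqref{eq_semi_discrete_CHNS_scheme_phi} with $2\tau\mu^{n+1}$ and of \eqref{eq_semi_discrete_CHNS_scheme_mu} with $-2(\phi^{n+1}-\phi^n)$, add them, and use the Neumann conditions \eqref{eq_semi_discrete_CHNS_scheme_boundary_conditions} together with $2(a-b,a)=\|a\|^2-\|b\|^2+\|a-b\|^2$ to obtain a Cahn--Hilliard identity carrying the two explicit couplings $2\lambda(F'(\phi^n),\phi^{n+1}-\phi^n)$ and $2\tau(\mathbf{u}^n\cdot\nabla\phi^{n+1},\mu^{n+1})$, plus the dissipation $2M\tau\|\nabla\mu^{n+1}\|^2$ and the telescope $\lambda(\|\nabla\phi^{n+1}\|^2-\|\nabla\phi^n\|^2+\|\nabla(\phi^{n+1}-\phi^n)\|^2)$. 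Testing \eqref{eq_semi_discrete_CHNS_scheme_tilde_u} with $2\tau\tilde{\mathbf{u}}^{n+1}$ next produces the two further explicit couplings $\tfrac{2\tau\rho^n}{\sqrt{E_1^{n+1}}}(\mathbf{u}^n\cdot\nabla\mathbf{u}^n,\tilde{\mathbf{u}}^{n+1})$ and $-2\tau(\mu^{n+1}\nabla\phi^{n+1},\tilde{\mathbf{u}}^{n+1})$, alongside the telescope $\|\tilde{\mathbf{u}}^{n+1}\|^2-\|\mathbf{u}^n\|^2+\|\tilde{\mathbf{u}}^{n+1}-\mathbf{u}^n\|^2$, dissipation $2\nu\tau\|\nabla\tilde{\mathbf{u}}^{n+1}\|^2$, and pressure work $2\tau(\nabla p^n,\tilde{\mathbf{u}}^{n+1})$.

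Next I will multiply \eqref{eq_semi_discrete_CHNS_scheme_tilde_u_add_term} by $4\lambda\tau\rho^{n+1}$; the factor $1/(2\rho^{n+1})$ is absorbed and polarization on the SAV variable yields $2\lambda(|\rho^{n+1}|^2-|\rho^n|^2+|\rho^{n+1}-\rho^n|^2)$, while the right-hand side reproduces exactly the four explicit couplings listed above. Summing the Cahn--Hilliard, momentum, and SAV identities, all four couplings cancel pairwise; dropping the nonnegative squares $\lambda\|\nabla(\phi^{n+1}-\phi^n)\|^2$, $\|\tilde{\mathbf{u}}^{n+1}-\mathbf{u}^n\|^2$ and $2\lambda|\rho^{n+1}-\rho^n|^2$ leaves the intermediate inequality
\begin{equation*}
\lambda(\|\nabla\phi^{n+1}\|^2-\|\nabla\phi^n\|^2)+\|\tilde{\mathbf{u}}^{n+1}\|^2-\|\mathbf{u}^n\|^2+2\lambda(|\rho^{n+1}|^2-|\rho^n|^2)+2\tau(\nabla p^n,\tilde{\mathbf{u}}^{n+1})\le-2M\tau\|\nabla\mu^{n+1}\|^2-2\nu\tau\|\nabla\tilde{\mathbf{u}}^{n+1}\|^2.
\end{equation*}

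To close the argument I will exploit Step 4. Taking the $L^2$ norm of \eqref{eq_semi_discrete_CHNS_scheme_tilde_u_u1_p_p1}, integrating by parts in the cross term, and invoking $\nabla\cdot\mathbf{u}^{n+1}=0$, $\tilde{\mathbf{u}}^{n+1}|_{\partial\Omega}=0$, and the induced Neumann condition $\partial_n(p^{n+1}-p^n)|_{\partial\Omega}=0$ coming from $(\mathbf{u}^{n+1}-\tilde{\mathbf{u}}^{n+1})\cdot\mathbf{n}=0$, I will derive $\|\tilde{\mathbf{u}}^{n+1}\|^2=\|\mathbf{u}^{n+1}\|^2+\tau^2\|\nabla(p^{n+1}-p^n)\|^2$ and $2\tau(\nabla p^n,\tilde{\mathbf{u}}^{n+1})=\tau^2(\|\nabla p^{n+1}\|^2-\|\nabla p^n\|^2-\|\nabla(p^{n+1}-p^n)\|^2)$; the two $\pm\tau^2\|\nabla(p^{n+1}-p^n)\|^2$ contributions cancel and the left-hand side collapses to $\tilde{E}^{n+1}-\tilde{E}^n$, yielding \eqref{eq_semi_discrete_scheme_energy_law}. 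For unique solvability, Steps 1, 2 and 4 are standard linear Cahn--Hilliard, linear diffusion, and Darcy/pressure-Poisson problems respectively, while Step 3 is the scalar quadratic in $\rho^{n+1}$ whose discriminant is nonnegative by the $-2ab\le a^2+b^2$ remark given after \eqref{eq_semi_discrete_CHNS_scheme_tilde_u_add_term}, with the root for which $\rho^{n+1}/\sqrt{E_1^{n+1}}$ is closer to $1$ selected as in the following Remark. The principal obstacle will be identifying the correct scaling $4\lambda\tau\rho^{n+1}$ of the SAV equation so that all four explicit couplings are reproduced simultaneously with the opposite sign; once this ``zero-energy-contribution'' cancellation is arranged, the remainder is routine polarization together with the orthogonality built into the pressure-correction projection.
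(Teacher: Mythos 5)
Your proposal is correct and follows essentially the same route as the paper: test the Cahn--Hilliard pair with $2\tau\mu^{n+1}$ and $-2(\phi^{n+1}-\phi^n)$, the momentum step with $2\tau\tilde{\mathbf{u}}^{n+1}$, multiply the SAV relation by $4\lambda\tau\rho^{n+1}$ so the four explicit coupling terms cancel pairwise, and use the orthogonality of the pressure-correction step to convert $\|\tilde{\mathbf{u}}^{n+1}\|^2+2\tau(\nabla p^n,\tilde{\mathbf{u}}^{n+1})$ into $\|\mathbf{u}^{n+1}\|^2+\tau^2(\|\nabla p^{n+1}\|^2-\|\nabla p^n\|^2)$, exactly as in \eqref{eq_semi_discrete_CHNS_scheme_tilde_u_u1_p_p1_rewrite_d}--\eqref{eq_unconditional_energy_energy}. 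Your bookkeeping of the $F'(\phi^n)$ coupling as $2\lambda(F'(\phi^n),\phi^{n+1}-\phi^n)$ is in fact the correctly scaled version of the term the paper writes, and the solvability discussion matches the paper's deferral to the quadratic-root argument and Remark \ref{remark_existence_solution}.
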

 	\begin{proof}
 		Taking the inner product of the equation \eqref{eq_semi_discrete_CHNS_scheme_phi} with $2\tau\mu^{n+1}$, we can obtain
 		\begin{equation}
 			\label{eq_semi_discrete_CHNS_scheme_phi_inner_2tau_mu}
 			2\left(\phi^{n+1}-\phi^n,\mu^{n+1}\right)+2\tau\left(\mathbf{u}^n\cdot\nabla\phi^{n+1},\mu^{n+1}\right)+2M\tau\|\nabla\mu^{n+1}\|^2=0.
 		\end{equation}
 		Taking the inner product of the equation \eqref{eq_semi_discrete_CHNS_scheme_mu} with $-2\left(\phi^{n+1}-\phi^n\right)$, we can obtain
 		\begin{equation}
 			\label{eq_inner_product_eq_semi_discrete_CHNS_scheme_mu}
 			\begin{aligned}
 				-2\left(\mu^{n+1},\phi^{n+1}-\phi^n\right)+\lambda\left(\|\nabla\phi^{n+1}\|^2-\|\nabla\phi^{n}\|^2+\|\nabla\phi^{n+1}-\nabla\phi^n\|^2\right)\\
 				+\lambda\left(F'(\phi^n),\frac{\phi^{n+1}-\phi^n}{\tau}\right)&=0.
 			\end{aligned}
 		\end{equation}
 	Taking the inner product of \eqref{eq_semi_discrete_CHNS_scheme_tilde_u} with $2\tau\tilde{\mathbf{u}}^{n+1}$, we 
 	derive that
 	\begin{equation}
 		\label{eq_semi_discrete_CHNS_scheme_tilde_u_inner_product_2delta_t}
 		\begin{aligned}
 			\|\tilde{\mathbf{u}}^{n+1}\|^2-\|\mathbf{u}^n\|^2+\|\tilde{\mathbf{u}}^{n+1}-\mathbf{u}^n\|^2+2\nu\tau
 			\|\nabla\tilde{\mathbf{u}}^{n+1}\|^2+\frac{2\tau\rho^n}{\sqrt{E_1^{n+1}}}\left(\mathbf{u}^n\cdot\nabla\mathbf{u}^n,\tilde{\mathbf{u}}^{n+1}\right)&\\
 			+2\tau\left(\nabla p^n,\tilde{\mathbf{u}}^{n+1}\right)-2\tau\left(\mu^{n+1}\nabla\phi^{n+1},\tilde{\mathbf{u}}^{n+1}\right)&=0.
 		\end{aligned}
 	\end{equation}
 	Next, we rewrite \eqref{eq_semi_discrete_CHNS_scheme_tilde_u_u1_p_p1} as
 	\begin{equation}
 		\label{eq_semi_discrete_CHNS_scheme_tilde_u_u1_p_p1_rewrite}
 		\mathbf{u}^{n+1}+\tau\nabla p^{n+1}=\tilde{\mathbf{u}}^{n+1}+\tau\nabla p^n.
 	\end{equation}
 	Then we derive from \eqref{eq_semi_discrete_CHNS_scheme_tilde_u_u1_p_p1_rewrite} that
 	\begin{equation}
 		\label{eq_semi_discrete_CHNS_scheme_tilde_u_u1_p_p1_rewrite_d}
 		\|\mathbf{u}^{n+1}\|^2-\|\tilde{\mathbf{u}}^{n+1}\|^2-2\tau\left(\nabla p^n,\tilde{\mathbf{u}}^{n+1}\right)+\tau^2\left(\|\nabla p^{n+1}\|^2-\|\nabla p^n\|^2\right)=0.
 	\end{equation}
 	Multiplying \eqref{eq_semi_discrete_CHNS_scheme_tilde_u_add_term} by $4\tau\lambda\rho^{n+1}$, we have
 	\begin{equation}
 		\label{eq_r_mut_rn1}
 		\begin{aligned}
 			2&\lambda\left(|\rho^{n+1}|^2-|\rho^n|^2+|\rho^{n+1}-\rho^n|^2\right)
 			=\lambda\left(F'(\phi^n),\frac{\phi^{n+1}-\phi^n}{\tau}\right)\\
 				&+\frac{2\tau\rho^n}{\sqrt{E_1^{n+1}}}\left(\mathbf{u}^n\cdot\nabla\mathbf{u}^n,\tilde{\mathbf{u}}^{n+1}\right)+2\tau\left(\mathbf{u}^n\cdot\nabla\phi^{n+1},\mu^{n+1}\right)-2\tau\left(\mu^{n+1}\nabla\phi^{n+1},\tilde{\mathbf{u}}^{n+1}\right).
 		\end{aligned}
 	\end{equation}
 	Adding the \eqref{eq_semi_discrete_CHNS_scheme_tilde_u_u1_p_p1_rewrite_d}, \eqref{eq_r_mut_rn1} and \eqref{eq_semi_discrete_CHNS_scheme_tilde_u_inner_product_2delta_t}, we can obtain
 	\begin{equation}
 		\label{eq_combing_two_equations}
 		\begin{aligned}	
 			\|\mathbf{u}^{n+1}\|^2&-\|\mathbf{u}^n\|^2+\|\tilde{\mathbf{u}}^{n+1}-\mathbf{u}^n\|^2+2\nu\tau
 			\|\nabla\tilde{\mathbf{u}}^{n+1}\|^2+\tau^2\left(\|\nabla p^{n+1}\|^2-\|\nabla p^n\|^2\right)\\
 			&+2\lambda\left(|\rho^{n+1}|^2-|\rho^n|^2+|\rho^{n+1}-\rho^n|^2\right)=\lambda\left(F'(\phi^n),\frac{\phi^{n+1}-\phi^n}{\tau}\right)
 			+
 			2\tau\left(\mathbf{u}^n\cdot\nabla\phi^{n+1},\mu^{n+1}\right).
 		\end{aligned}
 	\end{equation}
 	Combining the \eqref{eq_semi_discrete_CHNS_scheme_phi_inner_2tau_mu},  \eqref{eq_inner_product_eq_semi_discrete_CHNS_scheme_mu} with \eqref{eq_combing_two_equations}, we get
 	\begin{equation}
 		\label{eq_unconditional_energy_energy}
 		\begin{aligned}
 			\lambda&\left(\|\nabla\phi^{n+1}\|^2-\|\nabla\phi^{n}\|^2+\|\nabla\phi^{n+1}-\nabla\phi^n\|^2\right)\\
 			&+\|\mathbf{u}^{n+1}\|^2-\|\mathbf{u}^n\|^2+\|\tilde{\mathbf{u}}^{n+1}-\mathbf{u}^n\|^2+\tau^2\left(\|\nabla p^{n+1}\|^2-\|\nabla p^n\|^2\right)\\
 			&+2\lambda\left(|\rho^{n+1}|^2-|\rho^n|^2+|\rho^{n+1}-\rho^n|^2\right)
 			=-2M\tau\|\nabla\mu^{n+1}\|^2-2\nu\tau
 			\|\nabla\tilde{\mathbf{u}}^{n+1}\|^2.
 		\end{aligned}
 	\end{equation}
  Thus, we can obtain the desired result \eqref{eq_semi_discrete_scheme_energy_law}. 
 	\end{proof}
 	\begin{Remark}\label{remark_existence_solution}
 		The existence of a solution for the scheme \eqref{eq_semi_discrete_CHNS_scheme_phi}-\eqref{eq_semi_discrete_CHNS_scheme_tilde_incompressible_condition} 
 		can be proved using the standard argument of the Leray-Schauder fixed point theorem (see \cite{2015_ShenJie_Decoupled_energy_stable_schemes_for_phase_field_models_of_two_phase_incompressible_flows} for details of similar problems).
 	\end{Remark}
 	
 	\subsection{Fully discrete scheme}
 	In the subsection, we construct a decoupled, first-order and fully discrete scheme. For any test functions $\left(w,\varphi,\mathbf{v},q\right)\in\left(H^1(\Omega),H^1(\Omega),\mathbf{H}_0^1(\Omega),L_0^2(\Omega)\right)$,
 	the weak solutions $\left(\phi,\mu,\mathbf{u},p,\rho\right)$ of \eqref{eq_chns_equations} satisfies the following variational 
 	forms:
 	\begin{flalign}
 		\label{eq_variational_forms_phi}
 		\left(\partial_t\phi,w\right)+M\left(\nabla\mu,\nabla w\right)+b\left(\phi,\mathbf{u},w\right)&=0,\\
 		\label{eq_variational_forms_mu}
 		\left(\mu,\varphi\right)-\lambda\left(\nabla\phi,\nabla\varphi\right)-\lambda\left(F'(\phi),\varphi\right)&=0,\\
 		\label{eq_variational_forms_ns}
 		\left(\partial_t\mathbf{u},\mathbf{v}\right)+\nu\left(\nabla\mathbf{u},\nabla\mathbf{v}\right)
 			+\frac{\rho(t)}{\sqrt{E_1(\phi)+C}}\boldsymbol{B}\left(\mathbf{u},\mathbf{u},\mathbf{v}\right)+\left(\nabla p,\mathbf{v}\right)-b\left(\phi,\mathbf{v},\mu\right)&=0,\\
 		\label{eq_variational_forms_incompressible_condition}
 		\left(\nabla\cdot\mathbf{u},q\right)&=0,\\
 		\label{eq_variational_forms_additional_term}
 		\frac{d\rho}{dt}-\frac{1}{2\rho(t)}\left(\int_{\Omega}F'(\phi)\cdot\phi_t~d\mathbf{x}
 		+\frac{\rho(t)}{\lambda\sqrt{E_1(\phi)+C}}\boldsymbol{B}\left(\mathbf{u},\mathbf{u},\mathbf{u}\right)
 		+\frac{1}{\lambda}\left(\left(\mathbf{u}\cdot\nabla\phi,\mu\right)-\left(\mu\cdot\nabla\phi,\mathbf{u}\right)\right)\right)&=0.
 	\end{flalign}
 	It is well known that both the Taylor-Hood elements satisfy the discrete inf-sup condition \cite{1986_Girault_Vivette_Finite_element_methods_for_Navier_Stokes_equations}, i.e.,
 	\begin{equation}
 		\left\|q_h\right\|_{L^2} \leq C \sup _{\mathbf{0} \neq \mathbf{v}_h \in \mathbf{X}_h^{r+1} }
 		\frac{\left(q_h, \nabla \cdot \mathbf{v}_h\right)}{\left\|\nabla\mathbf{v}_h\right\|}, \quad \forall q_h \in \mathring{S}_h^r,
 	\end{equation}
 	for some constant $C>0$. 
 	For the simplicity of notations, we denote
 	$
 		\mathbf{\mathcal{X}}_h^r= S_h^r \times S_h^r \times \mathbf{X}_h^{r+1}  \times \mathring{S}_h^r\times \mathbb{R},
 	$
 	$\boldsymbol{v}^{n+1}=\boldsymbol{v}\left(x, t^{n+1}\right)$, 
 	and
 	\begin{equation}\label{eq_d_t}
 		\delta_\tau\boldsymbol{v}^{n+1}=\frac{\boldsymbol{v}^{n+1}-\boldsymbol{v}^n}{\tau}.
 	\end{equation}
	 \begin{Remark}
	 	In an abuse of notation, we use  $\boldsymbol{v}^{n+1}$  hereafter to denote the value of the exact solution $\boldsymbol{v}$ at $t^{n+1}$. 
	 	To simplify the notation, we replace $E_{1,h}^{n+1}$ with
	 	$E_1(\phi_h^{n+1})+C$, and $E_{1}^{n+1}$ with
	 	$E_1(\phi(t^{n+1}))+C$.
	 \end{Remark}
 	With the above definitions, we construct the following fully discrete decoupled first order scheme:
 	
	 $\mathbf{Step 1.}$ Given $\phi_h^n$, $\mu_h^n$ and $\mathbf{u}_h^n$,
	 find $\left(\phi_h^{n+1},\mu_h^{n+1}\right)\in \left(S_h^r\times S_h^r\right)$ such that
	 \begin{flalign}
	 	\label{eq_fully_discrete_CHNS_scheme_phi}
	 	\left(\delta_\tau\phi_h^{n+1},w_h\right)+b\left(\phi_h^{n+1},\mathbf{u}_h^{n},w_h\right)+M\left(\nabla\mu_h^{n+1},\nabla w_h\right)=0,&\\
	 	\label{eq_fully_discrete_CHNS_scheme_mu}
	 	\left(\mu_h^{n+1},\varphi_h\right)-\lambda\left(\nabla\phi_h^{n+1},\nabla\varphi_h\right)-\lambda\left(F'(\phi_h^n),\varphi_h\right)=0,&\\
	 	\label{eq_fully_discrete_CHNS_scheme_boundary_conditions}
	 	\partial_n\phi_h^{n+1}|_{\partial\Omega}=0,~	\partial_n\mu_h^{n+1}|_{\partial\Omega}=0.
	 \end{flalign}
	 
	 $\mathbf{Step 2.}$ Given $\mathbf{u}_h^n$, $\rho_h^n$, $p_h^n$ and by using the already calculated $\phi_h^{n+1}$, 
	 $\mu_h^{n+1}$, find $\tilde{\mathbf{u}}_h^{n+1}\in \mathbf{X}_h^{r+1}$ such that
	 \begin{flalign}
	 	\label{eq_fully_discrete_CHNS_scheme_tilde_u}
	 	\left(\frac{\tilde{\mathbf{u}}_h^{n+1}-\mathbf{u}_h^n}{ \tau},\mathbf{v}_h\right)
	 		+\frac{\rho_h^{n}}{ \sqrt{E_{1,h}^{n+1}}}\boldsymbol{B}\left(\mathbf{u}_h^n,\mathbf{u}_h^n,\mathbf{v}_h\right)
	 		+\nu\left(\nabla\tilde{\mathbf{u}}_h^{n+1},\nabla\mathbf{v}_h\right)&\notag\\
	 	+\left(\nabla p_h^n,\mathbf{v}_h\right)-b\left(\phi_h^{n+1},\mu_h^{n+1},\mathbf{v}_h\right)&=0,\\
	 	\tilde{\mathbf{u}}_h^{n+1}|_{\partial\Omega}&=0.
	 \end{flalign}
 	\par
  $\mathbf{Step 3.}$ Given $\mathbf{u}_h^n$, $\rho_h^n$ and by using the already calculated $\phi_h^{n+1}$, $\mu_h^{n+1}$, $\tilde{\mathbf{u}}_h^{n+1}$, 
  find $\rho_h^{n+1}$ such that
  \begin{flalign}
  	\label{eq_fully_discrete_CHNS_scheme_tilde_u_add_term}
  	\frac{\rho_h^{n+1}-\rho_h^n}{\tau}-\frac{1}{2\rho_h^{n+1}}\left(\left(F'(\phi_h^n),\frac{\phi_h^{n+1}-\phi_h^n}{\tau}\right)
  	+\frac{\rho_h^{n}}{\lambda \sqrt{E_{1,h}^{n+1}}}\boldsymbol{B}\left(\mathbf{u}_h^n,\mathbf{u}_h^{n},\tilde{\mathbf{u}}_h^{n+1}\right)\right.\notag&\\\left.
  	+\frac{1}{\lambda}\left(\left(\mathbf{u}_h^n\cdot\nabla\phi_h^{n+1},\mu_h^{n+1}\right)-\left(\mu_h^{n+1}\nabla\phi_h^{n+1},\tilde{\mathbf{u}}_h^{n+1}\right)\right)\right)&=0.
  \end{flalign}

 	 $\mathbf{Step 4.}$
 	Find $\left(p_h^{n+1},\mathbf{u}_h^{n+1}\right)\in \left(\mathring{S}_h^r,\mathbf{X}_h^{r+1}\right)$ such that
 	\begin{flalign}
 		\label{eq_fully_discrete_CHNS_scheme_tilde_u_u1_p_p1}
 		\frac{\mathbf{u}_h^{n+1}-\tilde{\mathbf{u}}_h^{n+1}}{\tau}+\nabla\left(p_h^{n+1}-p_h^n\right)=0,\\
 		\label{eq_fully_discrete_CHNS_scheme_tilde_incompressible_condition}
 		\left(\nabla\cdot\mathbf{u}_h^{n+1},q_h\right)=0,~\mathbf{u}_h^{n+1}\cdot\mathbf{n}|_{\partial\Omega}=0,
 	\end{flalign}
 	hold for all $\left(w_h,\varphi_h,\mathbf{v}_h,q_h\right)\in\mathbf{\mathcal{X}}_h^r$, and $n=0,1,2,\cdots,N-1$, where $\phi_h^0=R_h\phi^0,~\mathbf{u}_h^0=\mathbf{I}_h\mathbf{u}^0.$
 	\begin{Remark}
 		The fully discrete scheme \eqref{eq_fully_discrete_CHNS_scheme_phi}-\eqref{eq_fully_discrete_CHNS_scheme_tilde_incompressible_condition}
 		is unconditionally energy stable and unconditionally uniquely solvable 
 		(see Theorem \ref{theorem_unconditionally_energy_stable} and Remark \ref{remark_existence_solution}).
 	\end{Remark} 
 	To simplify the following theoretical analysis, we set the parameters 
 	$M=\lambda=\nu=1$. Then we can get the following lemmas which is boundedness of numerical solutions of scheme \eqref{eq_fully_discrete_CHNS_scheme_phi}-\eqref{eq_fully_discrete_CHNS_scheme_tilde_incompressible_condition}.
 	\begin{Lemma}
 		\label{Lemma_0301}
 		Let $\left(\phi_h^{n+1},\mu_h^{n+1},\tilde{\mathbf{u}}_h^{n+1},\mathbf{u}_h^{n+1},p_h^{n+1},\rho_h^{n+1}\right)\in \mathbf{\mathcal{X}}_h^r $ be the unique solution of  \eqref{eq_fully_discrete_CHNS_scheme_phi}-\eqref{eq_fully_discrete_CHNS_scheme_tilde_incompressible_condition}. Suppose 
 		that $E(\phi_h^0,\mathbf{u}_h^0)<C$, 
 		 for any $h,\tau>0$, the following estimates hold that
 		\begin{flalign}
 			\label{eq_boundedness_estimates_phi1}
 			\max_{0\leq n\leq m}\left(\|\mathbf{u}_h^{n+1}\|^2+\|\nabla\phi_h^{n+1}\|^2
 				+\tau^2\|\nabla p_h^{n+1}\|^2+2|\rho_h^{n+1}|^2\right)&\leq C,\\
 				\label{eq_boundedness_estimates_phi2}
 			\tau\sum_{n=0}^{m}\left(\|\nabla\mu_h^{n+1}\|^2+\|\nabla\tilde{\mathbf{u}}_h^{n+1}\|^2
 			\right)&\leq C,\\
			\sum_{n=0}^{N-1}\left(\|\nabla\phi_h^{n+1}-\nabla\phi_h^n\|^2
			+\|\tilde{\mathbf{u}}_h^{n+1}-\mathbf{u}_h^n\|^2+2|\rho_h^{n+1}-\rho_h^n|^2\right)&\leq C,\\
 			\|\phi_h^{m+1}\|_1^2+\tau\sum_{n=0}^{m}\|\mu_h^{n+1}\|^2&\leq C,
 		\end{flalign}
 	for some $C>0$ that is positive constant independent of $\tau$, $h$ and $T$.
 	\end{Lemma}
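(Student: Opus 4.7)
The plan is to mimic the semi-discrete energy proof of Theorem \ref{theorem_unconditionally_energy_stable} at the fully discrete level, then combine it with discrete mass conservation and Poincar\'e--Wirtinger to recover the missing $L^2$ pieces. The key observation is that every test function used in the semi-discrete argument either is one of the discrete iterates (and hence sits in $\mathcal{X}_h^r$) or is a constant, which also lies in $S_h^r$; so every algebraic manipulation from the proof of Theorem \ref{theorem_unconditionally_energy_stable} transfers verbatim.

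First I would take $w_h = 2\tau\mu_h^{n+1}$ in \eqref{eq_fully_discrete_CHNS_scheme_phi}, $\varphi_h = -2(\phi_h^{n+1}-\phi_h^n)$ in \eqref{eq_fully_discrete_CHNS_scheme_mu}, and $\mathbf{v}_h = 2\tau\tilde{\mathbf{u}}_h^{n+1}$ in \eqref{eq_fully_discrete_CHNS_scheme_tilde_u}. Rewriting Step~4 as $\mathbf{u}_h^{n+1}+\tau\nabla p_h^{n+1}=\tilde{\mathbf{u}}_h^{n+1}+\tau\nabla p_h^n$ and squaring, the pressure--velocity cross term drops because $(\mathbf{u}_h^{n+1},\nabla(p_h^{n+1}-p_h^n))=-(\nabla\cdot\mathbf{u}_h^{n+1},p_h^{n+1}-p_h^n)=0$ by discrete incompressibility, giving the analogue of \eqref{eq_semi_discrete_CHNS_scheme_tilde_u_u1_p_p1_rewrite_d}. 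Multiplying \eqref{eq_fully_discrete_CHNS_scheme_tilde_u_add_term} by $4\tau\rho_h^{n+1}$ and summing all five identities, the four ``zero-energy-contribution'' pieces, namely $\tfrac{2\tau\rho_h^n}{\sqrt{E_{1,h}^{n+1}}}\boldsymbol{B}(\mathbf{u}_h^n,\mathbf{u}_h^n,\tilde{\mathbf{u}}_h^{n+1})$, $2\tau(\mu_h^{n+1}\nabla\phi_h^{n+1},\tilde{\mathbf{u}}_h^{n+1})$, $2\tau(\mathbf{u}_h^n\cdot\nabla\phi_h^{n+1},\mu_h^{n+1})$, and $2\lambda(F'(\phi_h^n),\phi_h^{n+1}-\phi_h^n)$, cancel pairwise. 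What remains is precisely the discrete identity of the form \eqref{eq_unconditional_energy_energy}. Telescoping from $n=0$ to $m$, using the assumption $E(\phi_h^0,\mathbf{u}_h^0)<C$ together with $\rho_h^0=1$ and an appropriately initialized $p_h^0$, delivers the first three claimed bounds immediately.

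For the last estimate I would first prove discrete mass conservation by choosing $w_h=1$ in \eqref{eq_fully_discrete_CHNS_scheme_phi}: the diffusion term vanishes, and $b(\phi_h^{n+1},\mathbf{u}_h^n,1)=\int_\Omega\mathbf{u}_h^n\cdot\nabla\phi_h^{n+1}\,dx$ is killed after integration by parts because $\mathbf{u}_h^n\cdot\mathbf{n}|_{\partial\Omega}=0$ removes the boundary flux, while splitting $\phi_h^{n+1}$ into its mean and its mean-free part $\tilde\phi_h^{n+1}\in\mathring S_h^r$ kills the interior term $(\tilde\phi_h^{n+1},\nabla\cdot\mathbf{u}_h^n)=0$ via the discrete divergence-free condition of Step~4 (the $n=0$ case being handled by the standing assumption on $\mathbf{u}_h^0$). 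Consequently $(\phi_h^{n+1},1)=(\phi_h^0,1)=(\phi^0,1)$, so Poincar\'e--Wirtinger \eqref{eq_PW_inequalities_one} combined with the $\|\nabla\phi_h^{m+1}\|$ bound already established yields $\|\phi_h^{m+1}\|_1\le C$. To control $\mu_h$ in $L^2$, I take $\varphi_h=1$ in \eqref{eq_fully_discrete_CHNS_scheme_mu} to get $(\mu_h^{n+1},1)=\lambda(F'(\phi_h^n),1)$; since $|F'(s)|\lesssim|s|^3+|s|$ and $\|\phi_h^n\|_{H^1}$ is already uniformly bounded, the Sobolev embedding $H^1\hookrightarrow L^6$ (see \eqref{eq_PW_inequalities_two}) makes the right-hand side uniformly bounded, and a further application of Poincar\'e--Wirtinger followed by summation in time gives $\tau\sum_n\|\mu_h^{n+1}\|^2\le C$.

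The main obstacle is essentially bookkeeping: one must verify that the four pairwise cancellations really do go through verbatim with the skew-symmetric trilinear form $\boldsymbol{B}$ evaluated on discrete iterates, and that the discrete Leray step interacts cleanly with the inf-sup pair $\mathbf{X}_h^{r+1}\times\mathring S_h^r$ so that the pressure-velocity coupling in Step~4 decouples as above. Unique solvability, as noted in Remark \ref{remark_existence_solution}, then follows from the standard Leray--Schauder fixed-point argument referenced there.
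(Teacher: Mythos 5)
Your proposal is correct, and the first half (the telescoping energy identity giving \eqref{eq_boundedness_estimates_phi1}, \eqref{eq_boundedness_estimates_phi2} and the difference bounds) is exactly the paper's argument: the paper likewise just sums the fully discrete analogue of \eqref{eq_unconditional_energy_energy} from $n=0$ to $m$, relying on the same pairwise cancellation of the ``zero-energy-contribution'' terms and the same discrete Leray identity from Step~4. Where you diverge is the last estimate. The paper obtains $\|\phi_h^{m+1}\|_1\le C$ by testing \eqref{eq_fully_discrete_CHNS_scheme_phi} with $w_h=2\tau\phi_h^{n+1}$, bounding the convective term via $L^4$ embeddings and the observation $\mathbf{u}_h^{n+1}=\mathbf{I}_h\tilde{\mathbf{u}}_h^{n+1}$ (so that $\tau\sum\|\nabla\mathbf{u}_h^n\|^2$ is controlled), and then closing with the discrete Gr\"onwall lemma; it then gets $\tau\sum_n\|\mu_h^{n+1}\|^2\le C$ by testing \eqref{eq_fully_discrete_CHNS_scheme_mu} with $2\tau\mu_h^{n+1}$ and bounding $\|F'(\phi_h^n)\|$ through $H^1\hookrightarrow L^6$. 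Your route --- discrete mass conservation from $w_h=1$ plus Poincar\'e--Wirtinger for $\phi_h$, and mean-value control $(\mu_h^{n+1},1)=\lambda(F'(\phi_h^n),1)$ plus Poincar\'e--Wirtinger and the already-known bound on $\tau\sum\|\nabla\mu_h^{n+1}\|^2$ for $\mu_h$ --- is a genuinely different and in fact cleaner argument: it avoids Gr\"onwall entirely for the $\phi$ bound, and it sidesteps the paper's step \eqref{eq_taking_inner_product_2tau_mu}, where $\|\mu_h^{n+1}\|^2$ appears on both sides with the same coefficient and the inequality does not literally close as written. The mass-conservation identity you need is proved in the paper anyway (in the later lemma on $|(e_\phi^{n+1},1)|$), and the mean-plus-gradient splitting is precisely what \eqref{eq_operators_estimates_mean_value_inequation} is designed for, so your argument uses only tools already available. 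Both approaches ultimately rest on the same ingredients (the energy bound on $\|\nabla\phi_h^{n+1}\|$ and the $L^6$ control of $F'(\phi_h^n)$); yours trades the Gr\"onwall bookkeeping for the conservation-law observation.
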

 	\begin{proof}
 		Summing from $n=0$ to $m$ in \eqref{eq_unconditional_energy_energy}, we can obtain
 		\eqref{eq_boundedness_estimates_phi1} and \eqref{eq_boundedness_estimates_phi2}.
 		Taking inner product with $w_h=2\tau\phi_h^{n+1}$ in \eqref{eq_fully_discrete_CHNS_scheme_phi}, we can obtain
 		\begin{equation}
 			\label{eq_taking_inner_product_2tau_phi}
 			\|\phi_h^{n+1}\|^2-\|\phi_h^n\|^2+\|\phi_h^{n+1}-\phi_h^n\|^2=-2\tau\left(\mathbf{u}_h^{n}\cdot\nabla\phi_h^{n+1},\phi_h^{n+1}\right)
 				-2\tau\left(\nabla\mu_h^{n+1},\nabla\phi_h^{n+1}\right).
 		\end{equation}
 		Then using the Cauchy-Schwarz inequality and Young inequality, the right-hand side of \eqref{eq_taking_inner_product_2tau_phi} 
 		can be estimated by
 		\begin{flalign}
 			\label{eq_taking_inner_product_2tau_phi_right_hand_side01}
 			\big|2\tau\left(\mathbf{u}_h^{n}\cdot\nabla\phi_h^{n+1},\phi_h^{n+1}\right)\big|
 					\leq&~2\tau\|\mathbf{u}_h^{n}\|_{L^4}\|\phi_h^{n+1}\|_{L^4}\|\nabla\phi_h^{n+1}\|\notag\\
 					\leq&~\tau\|\nabla\phi_h^{n+1}\|^2
 					+C\tau\|\nabla\mathbf{u}_h^n\|^2\left(\|\phi_h^{n+1}\|^2+\|\nabla\phi_h^{n+1}\|^2\right),\\
 			\label{eq_taking_inner_product_2tau_phi_right_hand_side02}
 			|2\tau\left(\nabla\mu_h^{n+1},\nabla\phi_h^{n+1}\right)|\leq &~
 				 \tau\left(\|\nabla\mu_h^{n+1}\|^2+\|\nabla\phi_h^{n+1}\|^2\right).
 		\end{flalign}
 		Combining the above inequalities \eqref{eq_taking_inner_product_2tau_phi_right_hand_side01} and \eqref{eq_taking_inner_product_2tau_phi_right_hand_side02} with \eqref{eq_taking_inner_product_2tau_phi},
 		we have
 		\begin{equation}
 			\label{eq_combining_the_above_inequalities}
 			\begin{aligned}
 				\|\phi_h^{n+1}\|^2-\|\phi_h^n\|^2+\|\phi_h^{n+1}-\phi_h^n\|^2\leq &~\tau\|\nabla\phi_h^{n+1}\|^2
 					+C\tau\|\nabla\mathbf{u}_h^n\|^2\left(\|\phi_h^{n+1}\|^2+\|\nabla\phi_h^{n+1}\|^2\right)\\
 					&+\tau\left(\|\nabla\mu_h^{n+1}\|^2+\|\nabla\phi_h^{n+1}\|^2\right).
 			\end{aligned}
 		\end{equation}
 	Summing from $n=0$ to $m$ in \eqref{eq_combining_the_above_inequalities}, we can obtain
 	\begin{equation}
 		\label{eq_summing_from_comb_above_ineq}
 		\|\phi_h^{m+1}\|^2\leq\|\phi_h^0\|^2+\tau\sum_{n=0}^{m}\left(\|\nabla\mu_h^{n+1}\|^2+\|\nabla\phi_h^{n+1}\|^2+C\|\nabla\mathbf{u}_h^n\|^2\|\phi_h^{n+1}\|^2+C\|\nabla\mathbf{u}_h^{n+1}\|^2\|\nabla\phi_h^{n+1}\|^2\right).
 	\end{equation}
 	Nextly, we observe from \eqref{eq_fully_discrete_CHNS_scheme_tilde_u_u1_p_p1} that
 	 $\mathbf{u}_h^{n+1}=\mathbf{I}_h\tilde{\mathbf{u}}_h^{n+1}$. 
 	 Hence,  \eqref{eq_L2_estimate01} implies that $\|\mathbf{u}_h^{n+1}\|_1\leq C(\Omega)\|\tilde{\mathbf{u}}_h^{n+1}\|_1$, using the Gr\"{o}nwall's Lemma \ref{lemma_discrete_Gronwall_inequation}, we can get
 	 \begin{equation}
 	 	\label{eq_phi_norm1to_norm2}
 	 	\|\phi_h^{m+1}\|_1^2\leq\|\phi_h^{m+1}\|^2+\|\nabla\phi_h^{m+1}\|^2\leq C_1.
 	 \end{equation}
  	Taking inner product with $2\tau\mu_h^{n+1}$ in \eqref{eq_fully_discrete_CHNS_scheme_mu}, using the \eqref{eq_boundedness_estimates_phi1} and \eqref{eq_boundedness_estimates_phi2}, we can obtain
  	\begin{equation}
  		\label{eq_taking_inner_product_2tau_mu}
  		\begin{aligned}
  			2\tau\|\mu_h^{n+1}\|^2=&~2\tau\left(\nabla\phi_h^{n+1},\nabla\mu_h^{n+1}\right)+\lambda\left(F'(\phi_h^n),\mu_h^{n+1}\right)\\
  				\leq&~2\tau\left(\|\nabla\phi_h^{n+1}\|^2+\|\nabla\mu_h^{n+1}\|^2+\|\mu_h^{n+1}\|^2+\|F'(\phi_h^n)\|^2\right).
  		\end{aligned}
  	\end{equation}
  	From (2.37) in \cite{2024_YiNianyu_Convergence_analysis_of_a_decoupled_pressure_correction_SAV_FEM_for_the_Cahn_Hilliard_Navier_Stokes_model} and $\|\phi\|_{L^2}\leq C\|\phi\|_1$, we can have
  	\begin{equation}
  		\label{eq_nonlinear_item_inequalities}
  		\begin{aligned}
  			\|F'(\phi_h^n)\|^2=&~\frac{1}{\epsilon^4}\|(\phi_h^n)^3-\phi_h^n\|^2\leq C\left(\|\phi_h^n\|_{L^6}^6+\|\phi_h^n\|^2\right)\\
  			\leq &~C\left(\|\phi_h^n\|_1^6+\phi_h^n\|^2\right)\leq C_2.
  		\end{aligned}
  	\end{equation} 
  	According to \eqref{eq_taking_inner_product_2tau_mu} and \eqref{eq_nonlinear_item_inequalities}, it follows that 
  	\begin{equation}
  		\label{eq_muhn_inequalities}
  		\tau\sum_{n=0}^{m}\|\mu_h^{n+1}\|^2\leq C.
  	\end{equation} 
  	Combining the \eqref{eq_summing_from_comb_above_ineq} and \eqref{eq_muhn_inequalities}, 
  	we have thus proved the Lemma \autoref{Lemma_0301}.
 	\end{proof}
	\section{Error analysis}\label{section_error_analysis}
	In this section, we focus on the error estimates for the scheme \eqref{eq_fully_discrete_CHNS_scheme_phi}-\eqref{eq_fully_discrete_CHNS_scheme_tilde_incompressible_condition}. We denote
	\begin{equation}
		e_\phi^{n+1}=R_h\phi^{n+1}-\phi_h^{n+1}, e_\mu^{n+1}=\Pi_h\mu^{n+1}-\mu_h^{n+1}, e_\rho^{n+1}=\rho^{n+1}-\rho_h^{n+1}
	\end{equation}
	and
	\begin{equation}
		e_{\tilde{\mathbf{u}}}^{n+1}=\mathbf{P}_h\mathbf{u}^{n+1}-\tilde{\mathbf{u}}_h^{n+1},
		e_\mathbf{u}^{n+1}=\mathbf{P}_h\mathbf{u}^{n+1}-\mathbf{u}_h^{n+1}, e_p^{n+1}=P_hp^{n+1}-p_h^{n+1}.
	\end{equation}
	For $\left(e_\phi^{n+1},e_\mu^{n+1},e_{\tilde{\mathbf{u}}}^{n+1},e_\mathbf{u}^{n+1},e_p^{n+1},e_\rho^{n+1}\right)$ 
	and the Ritz quasi-projection \eqref{eq_Ritz_quasi_projections_equation} and 
	Stokes quasi-projection \eqref{eq_stokes_quasi_proojection_equation0001}-\eqref{eq_stokes_quasi_proojection_equation0002}, 
	we subtract \eqref{eq_variational_forms_phi}-\eqref{eq_variational_forms_incompressible_condition} 
	from \eqref{eq_fully_discrete_CHNS_scheme_phi}-\eqref{eq_fully_discrete_CHNS_scheme_tilde_incompressible_condition} to
	obtain the following error equations,
	\begin{flalign}
		\label{eq_error_equations_phi}
		\left(\delta_\tau e_\phi^{n+1},w_h\right)+M\left(\nabla e_\mu^{n+1},\nabla w_h\right)+b\left(R_h\phi^{n+1},\mathbf{u}^n,w_h\right)-b\left(\phi_h^{n+1},\mathbf{u}_h^n,w_h\right)
			\notag\\
			+\left(\nabla\left(\phi^{n+1}-R_h\phi^{n+1}\right)\cdot\mathbf{u}^n,w_h\right)-\left(\delta_\tau\left(R_h\phi^{n+1}-\phi^{n+1}\right),w_h\right)
			-\left(R_1^{n+1},w_h\right)&=0,\\
		\label{eq_error_equations_mu}
		\left(e_\mu^{n+1},\varphi_h\right)-\lambda\left(\nabla e_\phi^{n+1},\nabla\varphi_h\right)
			-\lambda\left(\nabla\left(\phi^{n+1}-R_h\phi^{n+1}\right),\nabla\varphi_h\right)-\left(\mu^{n+1}-\Pi_h\mu^{n+1},\varphi_h\right)\notag\\
			+\frac{\lambda}{\epsilon^2}\left((\phi_h^{n})^3-(\phi^{n})^3+e_\phi^{n}+\phi^n-R_h\phi^n,\varphi_h\right)&=0,\\
		\label{eq_error_equations_ns}
		\left( \frac{e_{\tilde{\mathbf{u}}}^{n+1}-e_\mathbf{u}^n}{\tau},\mathbf{v}_h\right)+\nu\left(\nabla e_{\tilde{\mathbf{u}}}^{n+1},\nabla\mathbf{v}_h\right)
		+\left(\nabla e_p^{n},\mathbf{v}_h\right)&\notag\\
		+\left(\frac{\rho^{n+1}}{\sqrt{E_1^{n+1}}}\left(\mathbf{u}^{n+1}\cdot\nabla\mathbf{u}^{n+1},\mathbf{v}_h\right)
		-\frac{\rho_h^n}{\sqrt{E_{1,h}^{n+1}}}\left(\mathbf{u}_h^{n}\cdot\nabla{\mathbf{u}}_h^{n},\mathbf{v}_h\right)\right)\notag\\
		+\left(\delta_\tau\left(\mathbf{u}^{n+1}-\mathbf{P}_h\mathbf{u}^{n+1}\right),\mathbf{v}_h\right)+\left(b\left(\phi_h^{n+1},\mu_h^{n+1},\mathbf{v}_h\right)-b\left(R_h\phi^{n+1},\mu^{n+1},\mathbf{v}_h\right)\right)-
		\left(R_2^{n+1},\mathbf{v}_h\right)&=0,\\
		\label{eq_error_equations_u_p_cor}
		\frac{e_\mathbf{u}^{n+1}-e_{\tilde{\mathbf{u}}}^{n+1}}{\tau}+\nabla\left(e_p^{n+1}-e_p^{n}\right)-R_3^{n+1}&=0,\\
		\label{eq_error_equations_incompressible}
		\left(\nabla\cdot e_\mathbf{u}^{n+1},q_h\right)&=0,\\
		\label{eq_error_equations_additional_terms}
		\delta_\tau e_\rho^{n+1}-\left(\frac{1}{2\rho^{n+1}}
		\left(F'(\phi^n),\delta_\tau\phi^{n+1}\right)-\frac{1}{2\rho_h^{n+1}}\left(F'(\phi_h^n),\delta_\tau\phi_h^{n+1}\right)\right)&\notag\\
		-\left(\frac{1}{2\lambda \sqrt{E_{1}^{n+1}}}\left(\mathbf{u}^n\cdot\nabla\mathbf{u}^{n},\mathbf{u}^{n}\right)-\frac{\rho_h^{n}}{2\lambda\rho_h^{n+1} \sqrt{E_{1,h}^{n+1}}}\left(\mathbf{u}_h^n\cdot\nabla\mathbf{u}_h^{n},\tilde{\mathbf{u}}_h^{n+1}\right)\right)\notag&\\
		-\left(\frac{1}{\lambda}\left(\left(\mathbf{u}^{n+1}\cdot\nabla\phi^{n+1},\mu^{n+1}\right)\right)-\left(\mu^{n+1}\nabla\phi^{n+1},\mathbf{u}^{n+1}\right)\right)&\notag\\
		+\left(\frac{1}{\lambda}\left(\left(\mathbf{u}_h^n\cdot\nabla\phi_h^{n+1},\mu_h^{n+1}\right)\right)-\left(\mu_h^{n+1}\nabla\phi_h^{n+1},\tilde{\mathbf{u}}_h^{n+1}\right)\right)
		+\frac{1}{2\rho^{n+1}}\left(F'(\phi^n),R_1^{n+1}\right)-R_4^{n+1}&=0,
	\end{flalign}
	where the truncation errors $R_1^{n+1}$, $R_2^{n+1}$, $R_3^{n+1}$ and $R_4^{n+1}$ are denoted by
	\begin{equation}
		\begin{aligned}
			&R_1^{n+1}=\delta_\tau R_h\phi^{n+1}-\partial_t\phi^{n+1},
			&R_2^{n+1}=\delta_\tau\mathbf{P}_h\mathbf{u}^{n+1}-\partial_t\mathbf{u}^{n+1},\\
			&R_3^{n+1}=\nabla\left(P_h p^{n+1}-P_h p^n\right),
			&R_4^{n+1}=\delta_\tau \rho^{n+1}-\partial_t \rho^{n+1}.
		\end{aligned}
	\end{equation}
	With the above error equations, we can obtain the following optimal $L^2$ error estimate which  is the main result of this draft:
	\begin{Theorem}\label{theorem_error_estimates_u_phi_mu_r}
		Suppose that the model \eqref{eq_chns_equations}-\eqref{eq_boundary_initial_conditions_equations} has a
		  unique solution $\left(\phi,\mu,\mathbf{u},p\right)$ 
		satisfying the regularities \eqref{eq_varibles_satisfied_regularities}. Then $\left(\phi_h^{n+1},\mu_h^{n+1},\mathbf{u}_h^{n+1},p_h^{n+1},\rho_h^{n+1}\right)\in \mathbf{\mathcal{X}}_h^r $ is 
		unique solution in the fully discrete scheme 
		\eqref{eq_fully_discrete_CHNS_scheme_phi}-\eqref{eq_fully_discrete_CHNS_scheme_tilde_u_add_term}.
		for a positive constant $\tau_0$ such that $0<\tau\leq \tau_0$, the numerical solution satisfies the error estimates as follows:
		\begin{flalign}
			\max_{0\leq n\leq N-1}\|\phi^{n+1}-\phi_h^{n+1}\|^2+\tau\sum_{n=0}^{N-1}\|\mu^{n+1}-\mu_h^{n+1}\|^2\leq C\left(\tau^2+h^{2(r+1)}\right),\\
			\max_{0\leq n\leq N-1}\|\mathbf{u}^{n+1}-\mathbf{u}_h^{n+1}\|^2\leq C\left(\tau^2+\mathcal{E}_h^2\right),\\
			\tau\sum_{n=0}^{N-1}\|\nabla\left(\mathbf{u}^{n+1}-\mathbf{u}_h^{n+1}\right)\|^2\leq C\left(\tau^2+h^{2(r+1)}\right),\\
			\tau\sum_{n=0}^{N-1}\|p^{n+1}-p_h^{n+1}\|^2\leq C\left(\tau^2+h^{2(r+1)}\right),\\
			\max_{0\leq n\leq N-1} |\rho^{n+1}-\rho_h^{n+1}|^2\leq C\left(\tau^2+h^{2(r+1)}\right).
		\end{flalign}
	where $C$ is a positive constant independent of  $\tau$ and $h$, and $\mathcal{E}_h$ is defined by \eqref{eq_mathcal_E_h}.
	\end{Theorem}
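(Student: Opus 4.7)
The plan is to split each error via the quasi-projections, writing $\phi^{n+1}-\phi_h^{n+1}=(\phi^{n+1}-R_h\phi^{n+1})+e_\phi^{n+1}$, $\mu^{n+1}-\mu_h^{n+1}=(\mu^{n+1}-\Pi_h\mu^{n+1})+e_\mu^{n+1}$, $\mathbf{u}^{n+1}-\mathbf{u}_h^{n+1}=(\mathbf{u}^{n+1}-\mathbf{P}_h\mathbf{u}^{n+1})+e_\mathbf{u}^{n+1}$, and analogously for $\tilde{\mathbf{u}}$ and $p$. The projection pieces are already controlled with optimal (and, for velocity, superconvergent) rates by Lemma \ref{Lemma_Ritz_quasi_projection} and the Stokes quasi-projection estimates, so the task reduces to bounding the FE-errors through the system \eqref{eq_error_equations_phi}--\eqref{eq_error_equations_additional_terms}. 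The key design feature of the Ritz and Stokes quasi-projections is that the cross-terms $(\nabla(\phi-R_h\phi)\cdot\mathbf{u},w_h)$ and $(\mu\nabla(\phi-R_h\phi),\mathbf{v}_h)$ have been absorbed into their defining equations, so each right-hand side of the error equations contains only truncation residuals $R_1^{n+1},\ldots,R_4^{n+1}$ and telescoped nonlinear differences.

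\textbf{Discrete energy inequality.} Mirroring the proof of Theorem \ref{theorem_unconditionally_energy_stable}, I would test \eqref{eq_error_equations_phi} with $2\tau e_\mu^{n+1}$, \eqref{eq_error_equations_mu} with $-2(e_\phi^{n+1}-e_\phi^n)$, \eqref{eq_error_equations_ns} with $2\tau e_{\tilde{\mathbf{u}}}^{n+1}$, take the squared $L^2$-norm of \eqref{eq_error_equations_u_p_cor} to produce $\|e_\mathbf{u}^{n+1}\|^2-\|e_{\tilde{\mathbf{u}}}^{n+1}\|^2+\tau^2(\|\nabla e_p^{n+1}\|^2-\|\nabla e_p^n\|^2)$, and multiply \eqref{eq_error_equations_additional_terms} by $4\tau\lambda e_\rho^{n+1}$. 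Adding yields an inequality whose left-hand side controls $\|\nabla e_\phi^{n+1}\|^2$, $\|e_\mathbf{u}^{n+1}\|^2$, $\tau^2\|\nabla e_p^{n+1}\|^2$, $|e_\rho^{n+1}|^2$ and the dissipation $\tau\|\nabla e_\mu^{n+1}\|^2+\tau\|\nabla e_{\tilde{\mathbf{u}}}^{n+1}\|^2$, while the right-hand side splits into (i) truncation residuals of size $O(\tau)$ in $L^2$, (ii) projection-derivative residuals bounded by the superconvergent rate $\mathcal{E}_h$ via \eqref{eq_Dtau_psi_Rhpsi_L2_norm_inequation} and the $\delta_\tau$-estimate of the Stokes quasi-projection, and (iii) telescoped nonlinear differences $F'(\phi^n)-F'(\phi_h^n)$, $\boldsymbol{B}(\mathbf{u}^n,\mathbf{u}^n,\cdot)-\boldsymbol{B}(\mathbf{u}_h^n,\mathbf{u}_h^n,\cdot)$ and SAV-coefficient differences, handled by H\"older, Sobolev embedding (Lemma \ref{Lemma_PW_inequalities_0203}) and the a priori bounds of Lemma \ref{Lemma_0301}. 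Terms carrying $\nabla e_\mu^{n+1}$ or $\nabla e_{\tilde{\mathbf{u}}}^{n+1}$ are absorbed into the dissipation via Young's inequality.

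\textbf{Negative-norm refinement and pressure.} The coupling $b(\phi_h^{n+1},\mu_h^{n+1},\mathbf{v}_h)-b(R_h\phi^{n+1},\mu^{n+1},\mathbf{v}_h)$ appearing in \eqref{eq_error_equations_ns}, when tested against $e_{\tilde{\mathbf{u}}}^{n+1}$, naturally yields factors of $\|e_\phi^{n+1}\|_{H^1}$ that would cost one order of $h$; to avoid this I invoke Lemma \ref{Lemma_operators_H10203} and \eqref{eq_operators_estimates_mean_value_inequation} to bound $\|e_\phi^{n+1}\|$ by $\varepsilon\|\nabla e_\phi^{n+1}\|+C_\varepsilon\|(-\Delta_h)^{-1/2}\tilde{e}_\phi^{n+1}\|+|(e_\phi^{n+1},1)|$, and then control the negative-norm piece by an additional test of \eqref{eq_error_equations_phi} with $w_h=2\tau(-\Delta_h)^{-1}\tilde{e}_\phi^{n+1}$, invoking the sharp $H^{-1}$ estimates of Lemma \ref{Lemma_Ritz_quasi_projection}. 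This yields $\|e_\mathbf{u}^{n+1}\|\le C(\tau+\mathcal{E}_h)$, after which $\|\mathbf{u}^{n+1}-\mathbf{u}_h^{n+1}\|\le\|\mathbf{u}^{n+1}-\mathbf{P}_h\mathbf{u}^{n+1}\|+\|e_\mathbf{u}^{n+1}\|$ combined with \eqref{eq_stokes_quasi_proojection_inequation0005} delivers the superconvergent $L^\infty(L^2)$ velocity estimate. The pressure bound is obtained by rearranging \eqref{eq_error_equations_ns} to isolate $(\nabla e_p^n,\mathbf{v}_h)$ and invoking the discrete inf-sup condition.

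\textbf{Main obstacle.} The hardest point is justifying the Lipschitz-linearization of $F'$, $1/\sqrt{E_{1,h}^{n+1}}$, and the fully explicit convective terms, all of which require $L^\infty$ control of $\phi_h^n$ and $\mathbf{u}_h^n$ that Lemma \ref{Lemma_0301} does not supply directly. I would close this by induction on $n$: assume $\|e_\phi^k\|_{L^\infty}+\|e_\mathbf{u}^k\|_{L^\infty}\le 1$ for $k\le n$, combine with the regularity \eqref{eq_varibles_satisfied_regularities} to bound $\phi_h^n,\mathbf{u}_h^n$ in $L^\infty$, apply the inverse inequality \eqref{eq_PW_inequalities_three} to the step-$n$ $L^2$ error bound, and propagate to step $n+1$ under a mild restriction $\tau\le\tau_0$. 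Under this hypothesis the SAV-ratio difference satisfies $|\rho^{n+1}/\sqrt{E_1^{n+1}}-\rho_h^n/\sqrt{E_{1,h}^{n+1}}|\le C(|e_\rho^n|+\|e_\phi^{n+1}\|+\|e_\phi^n\|)$, all nonlinear contributions become at most quadratic in the errors and are absorbed either into the dissipation or into the discrete Gronwall Lemma \ref{lemma_discrete_Gronwall_inequation}, yielding the five claimed estimates simultaneously.
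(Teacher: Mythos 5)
Your overall route coincides with the paper's: split each error through the Ritz/Stokes quasi-projections, run an energy-type argument on the error system that mirrors the proof of Theorem \ref{theorem_unconditionally_energy_stable}, recover the optimal $L^2$ rate for $e_\phi$ by testing the phase error equation against $(-\Delta_h)^{-1}$ of the mean-free part of $e_\phi^{n+1}$ together with \eqref{eq_operators_estimates_mean_value_inequation}, and close with the discrete Gr\"onwall lemma. Your induction hypothesis giving $L^\infty$ control of $\phi_h^n$ and $\mathbf{u}_h^n$ is, if anything, more careful than what the paper does (the paper invokes Lemma \ref{Lemma_0301}, which only yields $H^1$ bounds, to justify the $L^\infty$ factors inside the proof of Lemma \ref{Lemma_0403}); that part of your plan is sound.

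The genuine gap is the pressure estimate. You propose to ``rearrange \eqref{eq_error_equations_ns} to isolate $(\nabla e_p^{n},\mathbf{v}_h)$ and invoke the discrete inf-sup condition,'' but the rearranged momentum error equation contains the term $\left(\delta_\tau e_{\mathbf{u}}^{n+1},\mathbf{v}_h\right)$, so the inf-sup argument only delivers $\|e_p^{n+1}\|\leq C\left(\|\delta_\tau e_{\mathbf{u}}^{n+1}\|+\cdots\right)$. The first-order energy inequality you set up controls $\max_n\|e_{\mathbf{u}}^{n}\|$ and $\tau\sum_n\|\nabla e_{\tilde{\mathbf{u}}}^{n+1}\|^2$ but nothing about discrete time derivatives; estimating $\|\delta_\tau e_{\mathbf{u}}^{n+1}\|$ crudely by $2\tau^{-1}\max_n\|e_{\mathbf{u}}^{n}\|$ loses a full power of $\tau$ and destroys the claimed rate $\tau\sum_n\|e_p^{n+1}\|^2\leq C\left(\tau^2+h^{2(r+1)}\right)$. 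Closing this requires a second layer of estimates obtained by differencing the error equations in time --- bounding $\|\delta_\tau e_\phi^{n+1}\|$, $\|\delta_\tau e_\mu^{n+1}\|$, $|\delta_\tau e_\rho^{n+1}|$ and finally $\|\delta_\tau e_{\mathbf{u}}^{n+1}\|$, together with a separate treatment of the first step $n=0$ --- which is exactly what the paper spends its final subsection on and which your outline omits entirely. A smaller omission of the same flavor: the claimed bound on $\tau\sum_n\|e_\mu^{n+1}\|^2$ (full $L^2$ norm, not just the gradient) needs the mean-value identity for $\left(e_\mu^{n+1},1\right)$ combined with \eqref{eq_operators_estimates_mean_value_inequation}; the dissipation term $\tau\sum_n\|\nabla e_\mu^{n+1}\|^2$ alone does not give it.
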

	\begin{Remark}
		For $r = 1$, the error estimate for the velocity is one order lower than the interpolation, while the numerical experiments validate that this estimation is indeed optimal \cite{2023_CaiWentao_Optimal_L2_error_estimates_of_unconditionally_stable_finite_element_schemes_for_the_Cahn_Hilliard_Navier_Stokes_system}.
	\end{Remark}
	Next, we need several lemmas in order to prove the Theorem \ref{theorem_error_estimates_u_phi_mu_r}.
	\begin{Lemma}\label{Lemma_0401}
		For all $m\geq 0$ and for any $h,\tau>0$, there exists $C>0$, independent of $h$ and $\tau$, such that the truncation errors satisfy
		\begin{equation}
			\label{eq_truncation_errors_inequation}
			\tau\sum_{n=0}^{m}\left(\|R_1^{n+1}\|^2+\|R_2^{n+1}\|^2+\|R_3^{n+1}\|^2+\|R_4^{n+1}\|^2\right)\leq C\tau^2.
		\end{equation}
	\end{Lemma}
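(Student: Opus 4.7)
The plan is to treat each truncation error separately by the standard two-step decomposition: write $R_i^{n+1}$ as a \emph{time-discretization} residual plus a \emph{projection} residual, bound the first by Taylor's theorem with integral remainder, and bound the second by the time-difference projection estimates already collected in Section \ref{section_preliminaries}. Summation over $n$ then produces the $\tau^2$-type bound on the left-hand side, with projection-induced contributions of order $h^{2(r+1)}$ absorbed into $C$ under the paper's convention that the regularity norms of the exact solution are finite.

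For $R_1^{n+1}$ I would split
\begin{equation*}
R_1^{n+1} = \bigl(\delta_\tau\phi^{n+1} - \partial_t\phi^{n+1}\bigr) + \delta_\tau\bigl(R_h\phi^{n+1} - \phi^{n+1}\bigr).
\end{equation*}
The first piece equals $-\tau^{-1}\int_{t^n}^{t^{n+1}}(t-t^n)\,\partial_{tt}\phi\,dt$, so Cauchy-Schwarz and summation give $\tau\sum_n \|\cdot\|^2 \leq C\tau^2\|\partial_{tt}\phi\|_{L^2(0,T;L^2)}^2$, controlled by the regularity $\phi\in H^2(0,T;L^2)$ from \eqref{eq_varibles_satisfied_regularities}. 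The second piece is rewritten as $\tau^{-1}\int_{t^n}^{t^{n+1}}(R_h - I)\partial_t\phi\,dt$ and bounded using \eqref{eq_Dtau_psi_Rhpsi_L2_norm_inequation} combined with $\phi\in H^1(0,T;H^{r+1})$, giving an $h^{2(r+1)}$ contribution. The term $R_2^{n+1}$ is handled identically, using the Stokes quasi-projection $\mathbf{P}_h$ and its time-difference estimate in place of $R_h$, together with $\mathbf{u}\in H^2(0,T;\mathbf{L}^2)\cap H^1(0,T;\mathbf{H}^{r+1})$. The scalar $R_4^{n+1}$ needs only the Taylor step, requiring $\rho\in H^2(0,T;\mathbb{R})$, which is read off by differentiating the ODE defining $\rho$ and invoking the regularities of $\phi$, $\mathbf{u}$, $\mu$.

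The most delicate piece is $R_3^{n+1} = \nabla\bigl(P_hp^{n+1}-P_hp^n\bigr)$. I would write $R_3^{n+1} = \int_{t^n}^{t^{n+1}} \nabla P_h\,\partial_t p\,dt$, so that Cauchy-Schwarz and $H^1$-stability of $P_h$ yield $\tau\sum_n\|R_3^{n+1}\|^2 \leq C\tau^2\|\partial_t p\|_{L^2(0,T;H^1)}^2$. The subtlety here, and the main obstacle, is that \eqref{eq_varibles_satisfied_regularities} states only $p\in L^2(0,T;H^{r+1})$ explicitly; the needed time regularity of $p$ must be inferred by differentiating the momentum equation
\begin{equation*}
\nabla p = -\partial_t\mathbf{u} + \nu\Delta\mathbf{u} - (\mathbf{u}\cdot\nabla)\mathbf{u} + \mu\nabla\phi
\end{equation*}
in time and invoking the already-available regularities of $\mathbf{u}$, $\phi$, $\mu$ to conclude $\partial_t p\in L^2(0,T;H^1)$. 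Once this is in hand, assembling the four bounds immediately yields the claimed $C\tau^2$ estimate.
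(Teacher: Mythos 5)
Your proposal follows essentially the same route as the paper: the paper itself omits the proof of this lemma, deferring to Diegel et al.\ and Yi et al., but the auxiliary lemma it later proves for $R_1^{n+1}$ uses exactly your splitting into a Taylor remainder $\delta_\tau\phi^{n+1}-\partial_t\phi^{n+1}$ plus a time-differenced projection error, bounded by \eqref{eq_Dtau_psi_Rhpsi_L2_norm_inequation}, and your treatment of $R_2^{n+1}$, $R_3^{n+1}$, $R_4^{n+1}$ is the natural extension of that. One caveat, which the paper's own displayed chain $\|R_1^{n+1}\|^2\leq C\,h^{2r+2}\tau^{-1}\!\int_{t^n}^{t^{n+1}}\|\partial_t\phi\|_{H^{r+1}}^2\,ds+\tfrac{\tau}{3}\!\int_{t^n}^{t^{n+1}}\|\partial_{tt}\phi\|^2\,ds\leq C\tau^2$ equally suffers from: the projection contribution is genuinely of size $h^{2(r+1)}$ after summation and cannot be ``absorbed into $C$'' in a bound of the form $C\tau^2$ with $C$ independent of $h$; the honest right-hand side is $C(\tau^2+h^{2(r+1)})\leq C(\tau^2+\mathcal{E}_h^2)$, which is what is actually invoked downstream, so nothing breaks. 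A second minor point: since the Stokes quasi-projection at time $t$ is defined relative to $(\mathbf{u}(t),p(t),\phi(t),\mu(t))$, it does not commute with $\partial_t$, so for $R_3^{n+1}$ you should difference via $P_hp^{n+1}-P_hp^n=(p^{n+1}-p^n)-[(p^{n+1}-P_hp^{n+1})-(p^n-P_hp^n)]$ and use the time-differenced projection estimates (as in the third estimate of the Stokes quasi-projection lemma) rather than writing $\int\nabla P_h\partial_tp\,dt$ literally; your identification of the missing time regularity of $p$, recovered from the momentum equation, is the right fix either way.
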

	The proof of this lemma can be completed by the methods analogous to that used in Lemma 3.1 of \cite{2015_Diegel_Analysis_of_a_mixed_finite_element_method_for_a_Cahn_Hilliard_Darcy_Stokes_system} 
	and \cite{2024_YiNianyu_Convergence_analysis_of_a_decoupled_pressure_correction_SAV_FEM_for_the_Cahn_Hilliard_Navier_Stokes_model}. Here, we omit the details.
	\begin{Lemma} \label{Lemma_0402}
		Suppose $g\in H^1(\Omega)$ and $v\in \mathring{S}_h^r$. Then
		\begin{equation}
			|(g,v)|\leq C\|\nabla g\|\|v\|_{H^{-1}},
		\end{equation}
		where $C$ is positive constant independent of $h$.
	\end{Lemma}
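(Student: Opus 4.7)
The plan is to prove this by a standard duality argument, exploiting the fact that $v \in \mathring{S}_h^r$ has mean zero. The idea is to transfer a derivative from $v$ onto $g$ by solving an auxiliary Poisson problem, so that $g$ only appears through $\nabla g$ and the dual character of the $H^{-1}$ norm on the $v$ side can be activated.

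Concretely, I would introduce the auxiliary function $\psi \in \mathring{H}^1(\Omega)$ defined as the unique solution of the Neumann problem
\begin{equation*}
-\Delta \psi = v \text{ in } \Omega, \qquad \partial_{\mathbf{n}} \psi = 0 \text{ on } \partial \Omega, \qquad \int_\Omega \psi \, dx = 0.
\end{equation*}
This problem is well-posed precisely because $\int_\Omega v \, dx = 0$, which holds since $v \in \mathring{S}_h^r$. Standard elliptic regularity (or, equivalently, testing the equation against $\psi$ itself and using the characterization of the dual norm on mean-zero functionals combined with the equivalence in Lemma~\ref{Lemma_operators_H10203}) yields $\|\nabla \psi\| \leq C\|v\|_{H^{-1}}$. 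Then I would integrate by parts,
\begin{equation*}
(g,v) = (g, -\Delta \psi) = (\nabla g, \nabla \psi) - \int_{\partial\Omega} g \, \partial_{\mathbf{n}} \psi \, dS = (\nabla g, \nabla \psi),
\end{equation*}
where the boundary term vanishes thanks to the homogeneous Neumann condition on $\psi$. Applying Cauchy--Schwarz and inserting the bound for $\|\nabla \psi\|$ yields $|(g,v)| \leq \|\nabla g\|\|\nabla \psi\| \leq C \|\nabla g\| \|v\|_{H^{-1}}$, which is the claimed inequality.

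The only subtle point is that $g \in H^1(\Omega)$ is not assumed to vanish on $\partial\Omega$, so the naive bound $|(g,v)| \leq \|g\|_{H^1_0}\|v\|_{H^{-1}}$ is unavailable. This is precisely what is resolved by placing the Neumann boundary condition on $\psi$ rather than on $g$: the solvability of the Neumann problem is purchased by the mean-zero property of $v$, and in exchange the boundary term in the integration by parts disappears without requiring any boundary condition on $g$. As an alternative, one could avoid the continuous auxiliary problem entirely by setting $w_h = (-\Delta_h)^{-1}v \in \mathring{S}_h^r$, using the Ritz projection $R_h(g-\bar g)$ of the mean-subtracted $g$ (which lies in $\mathring{S}_h^r$), and invoking the Ritz orthogonality together with Lemma~\ref{Lemma_operators_H10203} to reach $\|\nabla w_h\| \leq C\|v\|_{H^{-1}}$; but the continuous argument above is cleaner and avoids any projection-error remainders.
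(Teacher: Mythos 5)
Your proof is correct and is essentially the argument the paper defers to (Lemma 3.2 of Diegel et al.): the duality trick via the mean-zero Neumann problem, where $\|\nabla\psi\|\leq C\|v\|_{H^{-1}}$ follows from testing against $\psi$, and where one can avoid elliptic regularity altogether by using the weak form $(\nabla\psi,\nabla\chi)=(v,\chi)$ for all $\chi\in H^1(\Omega)$ with $\chi=g$ instead of integrating by parts the strong form. The one point worth stating explicitly is that the lemma (and your bound on $\|\nabla\psi\|$) requires $\|\cdot\|_{H^{-1}}$ to be the dual norm over mean-zero $H^1$ functions, consistent with Lemma \ref{Lemma_operators_H10203} and the cited reference, rather than the dual of $H_0^1(\Omega)$, for which the estimate would fail.
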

	The proof of the Lemma \ref{Lemma_0402} is similar to Lemma 3.2 of  
	\cite{2015_Diegel_Analysis_of_a_mixed_finite_element_method_for_a_Cahn_Hilliard_Darcy_Stokes_system}, here we omit it. 
	\begin{Lemma}\label{Lemma_0403}
		With the regularities \eqref{eq_varibles_satisfied_regularities}, for any $h,\tau >0$, suppose that $\left(\phi,\mu,\mathbf{u},p\right)$ is weak solution to \eqref{eq_fully_discrete_CHNS_scheme_phi}-\eqref{eq_fully_discrete_CHNS_scheme_tilde_incompressible_condition}, we have 
		\begin{equation}
			\|\nabla\left((\phi^{n})^3-(\phi_h^{n})^3\right)\|\leq C\|\nabla \left(\phi^{n}-\phi_h^{n}\right)\|.
		\end{equation}
		where $n=0,1,2,\cdots, N-1.$
	\end{Lemma}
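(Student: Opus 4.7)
I would begin from the algebraic factorization $a^{3}-b^{3}=(a-b)(a^{2}+ab+b^{2})$ and, after adding and subtracting $(\phi^{n})^{2}\nabla\phi_{h}^{n}$, decompose
\[
\nabla\bigl((\phi^{n})^{3}-(\phi_{h}^{n})^{3}\bigr)=3(\phi^{n})^{2}\nabla(\phi^{n}-\phi_{h}^{n})+3(\phi^{n}-\phi_{h}^{n})(\phi^{n}+\phi_{h}^{n})\nabla\phi_{h}^{n}.
\]
The first summand is essentially free: the regularity $\phi\in C([0,T];W^{2,4}(\Omega))$ together with the embedding $W^{2,4}(\Omega)\hookrightarrow L^{\infty}(\Omega)$ for $d\le 3$ gives $\|\phi^{n}\|_{L^{\infty}}^{2}\le C$, so its $L^{2}$-norm is controlled by $C\|\nabla(\phi^{n}-\phi_{h}^{n})\|$ outright.

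For the second summand I would apply H\"older's inequality with the triple exponents $L^{6}\!\times\! L^{6}\!\times\! L^{6}$, since $\tfrac{1}{6}+\tfrac{1}{6}+\tfrac{1}{6}=\tfrac{1}{2}$. The middle factor $\|\phi^{n}+\phi_{h}^{n}\|_{L^{6}}$ is uniformly bounded by the regularity of $\phi^{n}$, the $H^{1}$-estimate $\|\phi_{h}^{n}\|_{1}\le C$ from Lemma~\ref{Lemma_0301}, and the embedding $H^{1}\hookrightarrow L^{6}$. To upgrade $\|\phi^{n}-\phi_{h}^{n}\|_{L^{6}}$ into the seminorm $\|\nabla(\phi^{n}-\phi_{h}^{n})\|$ I would invoke mass conservation: testing the discrete Cahn--Hilliard equation \eqref{eq_fully_discrete_CHNS_scheme_phi} with $w_{h}=1$ preserves the discrete mean of $\phi_{h}^{n}$, and since $\phi_{h}^{0}=R_{h}\phi^{0}$ inherits the mean of $\phi^{0}$ from the defining property of $R_{h}$, the difference $\phi^{n}-\phi_{h}^{n}$ has zero mean for every $n$. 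Poincar\'e--Wirtinger together with $H^{1}\hookrightarrow L^{6}$ then delivers $\|\phi^{n}-\phi_{h}^{n}\|_{L^{6}}\le C\|\nabla(\phi^{n}-\phi_{h}^{n})\|$.

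The genuine obstacle is controlling $\|\nabla\phi_{h}^{n}\|_{L^{6}}$, since Lemma~\ref{Lemma_0301} provides only an $L^{2}$ control on $\nabla\phi_{h}^{n}$. My plan is to split $\nabla\phi_{h}^{n}=\nabla R_{h}\phi^{n}-\nabla e_{\phi}^{n}$: the Ritz projection part is bounded in $L^{\infty}$ by $W^{1,\infty}$-stability of $R_{h}$ applied to $\phi^{n}\in W^{2,4}\hookrightarrow W^{1,\infty}$, and the discrete error part is handled via the inverse inequality \eqref{eq_PW_inequalities_three}, which yields $\|\nabla e_{\phi}^{n}\|_{L^{6}}\le C h^{-d/3}\|\nabla e_{\phi}^{n}\|$. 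The scaling $h^{-d/3}$ is absorbed by the optimal $O(h^{r})$ bound on $\|\nabla e_{\phi}^{n}\|$ with $r\ge 1$, which is precisely the induction hypothesis driving Theorem~\ref{theorem_error_estimates_u_phi_mu_r}. Lemma~\ref{Lemma_0403} is therefore not really a standalone estimate but an ingredient of that induction, and the delicate point will be ensuring that the constant $C$ remains uniform in $n$, $h$, and $\tau$ as the induction is advanced one step at a time.
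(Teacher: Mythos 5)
Your algebraic identity is correct, but you have chosen the less convenient of the two symmetric decompositions of $\nabla(a^3-b^3)$, and that choice is what manufactures the ``genuine obstacle'' you then have to fight. The paper instead writes $\nabla\bigl((\phi^{n})^3-(\phi_h^{n})^3\bigr)=3(\phi_h^{n})^{2}\nabla(\phi^{n}-\phi_h^{n})+3(\phi^{n}-\phi_h^{n})(\phi^{n}+\phi_h^{n})\nabla\phi^{n}$, i.e.\ it puts the squared factor on the \emph{discrete} solution and the lone gradient on the \emph{exact} solution. With that arrangement the delicate factor is $\|\nabla\phi^{n}\|_{L^{6}}$, which is free from the regularity \eqref{eq_varibles_satisfied_regularities}, and the remaining factors are handled exactly as you propose ($L^6\times L^6\times L^6$ H\"older, $H^1\hookrightarrow L^6$, boundedness of $\|\phi_h^{n}\|_{H^1}$ from Lemma \ref{Lemma_0301}); no inverse inequality and no appeal to the error estimates is needed. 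Your observation that $\phi^{n}-\phi_h^{n}$ has zero mean, so that Poincar\'e--Wirtinger legitimately upgrades $\|\phi^{n}-\phi_h^{n}\|_{L^6}$ to $\|\nabla(\phi^{n}-\phi_h^{n})\|$, is a point the paper passes over silently and is worth making explicit.

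The real difficulty with your route is the last step. You need $\|\nabla\phi_h^{n}\|_{L^{6}}\le C$ and propose to obtain it from the inverse inequality together with the bound on $\|\nabla e_\phi^{n}\|$ coming from the error analysis. That makes the lemma circular in an essential way: Lemma \ref{Lemma_0403} is used \emph{inside} the proof of the $H^1$ error estimate (to control the term $\frac{\lambda}{\epsilon^2}\left((\phi_h^{n})^3-(\phi^{n})^3,\delta_\tau e_\phi^{n+1}\right)$), so you would have to restructure all of Section 4 as a simultaneous induction on $n$ and verify uniformity of the constants, which is far more delicate than what the paper does. Moreover, the error bound on $\|\nabla e_\phi^{n}\|$ contains a $\tau$-contribution that is not absorbed by the factor $h^{-d/3}$ unless $\tau\le Ch^{d/3}$, so your argument would prove the lemma only under a mesh--time-step coupling, whereas it is stated, and used, for arbitrary $h,\tau>0$. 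Swapping the decomposition removes all of this. (For fairness: the paper's own version leans on $\|\phi_h^{n}\|_{L^\infty}$, which it attributes to Lemma \ref{Lemma_0301}; since that lemma only supplies an $H^1$ bound, that step is itself not fully justified for $d\ge 2$ --- but at least it does not presuppose the conclusion of the error analysis.)
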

	\begin{proof}
		For $C$ is independent of $\tau$ and $h$.
		\begin{equation}
			\begin{aligned}
				\|\nabla\left((\phi^{n})^3-(\phi_h^{n})^3\right)\| \leq &~3\|(\phi_h^{n})^2\nabla \left(\phi^{n}-\phi_h^{n}\right)\|+3\|\nabla\phi^{n}\left(\phi^{n}+\phi_h^{n}\right)\left(\phi^{n}-\phi_h^{n}\right)\|\\
				\leq &~3\|\phi_h^{n}\|_{L^{\infty}}^2\|\nabla \left(\phi^{n}-\phi_h^{n}\right)\|\notag\\
				&+ 	3\|\nabla \phi^{n}\|_{L^6}\|\phi^{n}+\phi_h^{n}\|_{L^6}\|\phi^{n}-\phi_h^{n}\|_{L^6}\\
				\leq &~3\left(\|\phi_h^{n}\|_{L^{\infty}}^2+C\|\nabla \phi^{n}\|_{L^6}\|\phi^{n}+\phi_h^{n}\|_{H^1}\right)\|\nabla \left(\phi^{n}-\phi_h^{n}\right)\|\\
				\leq &~C\|\nabla \left(\phi^{n}-\phi_h^{n}\right)\|.
			\end{aligned}
		\end{equation}
		Then, using the Lemma \ref{Lemma_0301} and the regularities \eqref{eq_varibles_satisfied_regularities}, we can obtain the result.
	\end{proof}
	\subsection{Estimates for $e_\phi^{n+1}$ and $e_\mu^{n+1}$}
	The aim of subsection is to estimate the errors of $\phi$ and $\mu$. 
	To obtain the estimates for $e_\phi^{n+1}$ and $e_\mu^{n+1}$, testing $w_h=e_\mu^{n+1}$ and $\varphi_h=\delta_\tau e_{\phi}^{n+1}$ in \eqref{eq_error_equations_phi} and \eqref{eq_error_equations_mu}, we can obtain
	\begin{equation}
		\label{eq_error_equations_phi_test_e_mu}
		\begin{aligned}
			&\left(\delta_\tau e_\phi^{n+1},e_\mu^{n+1}\right)+M\left(\nabla e_\mu^{n+1},\nabla e_\mu^{n+1}\right)\\
				=&~b\left(\phi_h^{n+1},\mathbf{u}_h^n,e_\mu^{n+1}\right)-b\left(R_h\phi^{n+1},\mathbf{u}^n,e_\mu^{n+1}\right)
				+\left(\delta_\tau\left(R_h\phi^{n+1}-\phi^{n+1}\right),e_\mu^{n+1}\right)+\left(R_1^{n+1},e_\mu^{n+1}\right),
		\end{aligned}
	\end{equation}
	and
	\begin{equation}
		\label{eq_error_equations_mu_test_delta_e_mu}
		\begin{aligned}
			&-\left(e_\mu^{n+1},\delta_\tau e_\phi^{n+1}\right)+\lambda\left(\nabla e_\phi^{n+1},\nabla\delta_\tau e_\phi^{n+1}\right)+\lambda\left(\nabla\left(\phi^{n+1}-R_h\phi^{n+1}\right),\nabla\delta_\tau e_\phi^{n+1}\right)\\
			=&\left(\mu^{n+1}-\Pi_h\mu^{n+1},\delta_\tau e_\phi^{n+1}\right)
			+\frac{\lambda}{\epsilon^2}\left((\phi_h^{n})^3-(\phi^{n})^3+e_\phi^{n}+\phi^n-R_h\phi^n,\delta_\tau e_\phi^{n+1}\right).
		\end{aligned}
	\end{equation}
	Adding the equation \eqref{eq_error_equations_phi_test_e_mu} to \eqref{eq_error_equations_mu_test_delta_e_mu}, we get
	\begin{equation}
		\label{eq_error_phi_mu}
		\begin{aligned}
			&\frac{\lambda}{2}\delta_\tau\|\nabla e_\phi^{n+1}\|^2+\frac{\lambda}{2\tau}\bigg\|\nabla\left(e_\phi^{n+1}-e_\phi^n\right)\bigg\|^2+M\|\nabla e_\mu^{n+1}\|^2\\
			=&\left(\delta_\tau\left(R_h\phi^{n+1}-\phi^{n+1}\right),e_\mu^{n+1}\right)
				+\left(b\left(\phi_h^{n+1},\mathbf{u}_h^n,e_\mu^{n+1}\right)-b\left(R_h\phi^{n+1},\mathbf{u}^n,e_\mu^{n+1}\right)\right)+\left(R_1^{n+1},e_\mu^{n+1}\right)\\
				&+\frac{\lambda}{\epsilon^2}\left(e_\phi^{n},\delta_\tau e_\phi^{n+1}\right)
				+\frac{\lambda}{\epsilon^2}\left((\phi_h^{n})^3-(\phi^{n})^3,\delta_\tau e_\phi^{n+1}\right)
				+\frac{\lambda}{\epsilon^2}\left(\phi^n-R_h\phi^n,\delta_\tau e_\phi^{n+1}\right)\\
				&+\left(\mu^{n+1}-\Pi_h\mu^{n+1},\delta_\tau e_\phi^{n+1}\right)
					+\lambda\left(\nabla\left(R_h\phi^{n+1}-\phi^{n+1}\right),\nabla\delta_\tau e_\phi^{n+1}\right).
		\end{aligned}
	\end{equation}
	Now, we estimate the right-hand side terms of \eqref{eq_error_phi_mu}, respectively. 
	By \eqref{eq_Dtau_psi_Rhpsi_Hneg1_norm_inequation}, it is easy for us to see that
	\begin{equation}
		\left(\delta_\tau\left(R_h\phi^{n+1}-\phi^{n+1}\right),e_\mu^{n+1}\right)\leq 
			C\|\delta_\tau\left(R_h\phi^{n+1}-\phi^{n+1}\right)\|_{H^{-1}}\|e_\mu^{n+1}\|_{H^1}
			\leq C\varepsilon^{-1}\mathcal{E}_h^{2}+\varepsilon\|e_\mu^{n+1}\|_{H^1}^2.
	\end{equation}
	By \eqref{eq_trilinear_forms} and error estimates \eqref{eq_stokes_quasi_proojection_inequation0005}, we can obtain
	\begin{equation}
		\begin{aligned}
			&b\left(\phi_h^{n+1},\mathbf{u}_h^n,e_\mu^{n+1}\right)-b\left(R_h\phi^{n+1},\mathbf{u}^n,e_\mu^{n+1}\right)\\
			=&~\left(\nabla\phi_h^{n+1}\cdot\mathbf{u}_h^n,e_\mu^{n+1}\right)-\left(\nabla R_h\phi^{n+1}\cdot\mathbf{u}^n,e_\mu^{n+1}\right)\\
			=&~\left(\nabla\phi_h^{n+1}\cdot(\mathbf{u}_h^n-\mathbf{u}^n),e_\mu^{n+1}\right)
				+\left(\nabla (\phi_h^{n+1}-R_h\phi^{n+1})\cdot\mathbf{u}^n,e_\mu^{n+1}\right)\\
			\leq&~C\left(\|e_\mathbf{u}^n+\mathbf{u}^n-\mathbf{P}_h\mathbf{u}^n\|+\|\nabla e_\phi^{n+1}\|\right)\|e_\mu^{n+1}\|_{L^6}\\
			\leq&~C\varepsilon^{-1}\left(\|e_\mathbf{u}^n\|^2+\|\nabla e_\phi^{n+1}\|^2+\mathcal{E}_h^2+\tau^2\right)+\varepsilon\|e_\mu^{n+1}\|_{H^1}^2.
		\end{aligned}
	\end{equation}
	We now proceed to estimate the third term at the right-hand side of \eqref{eq_error_phi_mu}.
	In order to estimate this term, we need to the following lemma.
	\begin{Lemma}
		For any $h,\tau>0$, there exists $C>0$, independent of $h$ and $\tau$, such that
		\begin{flalign}
			\|R_1^{n+1}\|^2\leq C\frac{h^{2r+2}}{\tau}\int_{t^n}^{t^{n+1}}\|\partial_t\phi(s)\|_{H^{r+1}}^2~ds
				+\frac{\tau}{3}\int_{t^n}^{t^{n+1}}\|\partial_{tt}\phi(s)\|^2~d s\leq C\tau^2,
		\end{flalign}
	\end{Lemma}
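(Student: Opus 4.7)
The plan is to split $R_1^{n+1} = \delta_\tau R_h\phi^{n+1} - \partial_t\phi^{n+1}$ into a spatial-projection piece and a temporal-discretization piece, namely
\begin{equation*}
R_1^{n+1} = \underbrace{\delta_\tau\bigl(R_h\phi^{n+1} - \phi^{n+1}\bigr)}_{\text{(I)}} + \underbrace{\delta_\tau\phi^{n+1} - \partial_t\phi^{n+1}}_{\text{(II)}},
\end{equation*}
and then estimate each term by a standard integral-remainder argument.

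For term (I), I would exploit the fact that the Ritz projection $R_h$ is a time-independent linear operator, so it commutes with $\partial_t$. Writing the finite difference as an integral,
\begin{equation*}
\delta_\tau(R_h\phi^{n+1} - \phi^{n+1}) = \frac{1}{\tau}\int_{t^n}^{t^{n+1}} (R_h - I)\,\partial_t\phi(s)\,ds,
\end{equation*}
and applying Cauchy--Schwarz together with the projection estimate \eqref{eq_psi_Rhpsi_Ls_norm_inequation} at $s=2$ pointwise in $s$, I obtain
\begin{equation*}
\|\text{(I)}\|^2 \le \frac{1}{\tau}\int_{t^n}^{t^{n+1}} \|(R_h-I)\partial_t\phi(s)\|^2\,ds \le \frac{C h^{2r+2}}{\tau}\int_{t^n}^{t^{n+1}} \|\partial_t\phi(s)\|_{H^{r+1}}^2\,ds.
\end{equation*}

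For term (II), I use the Taylor expansion with integral remainder around $t^{n+1}$: swapping the order of integration in $\int_{t^n}^{t^{n+1}}\int_s^{t^{n+1}}\partial_{tt}\phi(\sigma)\,d\sigma\,ds$ yields
\begin{equation*}
\delta_\tau\phi^{n+1} - \partial_t\phi^{n+1} = -\frac{1}{\tau}\int_{t^n}^{t^{n+1}} (s - t^n)\,\partial_{tt}\phi(s)\,ds.
\end{equation*}
A Cauchy--Schwarz bound, using $\int_{t^n}^{t^{n+1}}(s-t^n)^2\,ds = \tau^3/3$, then gives
\begin{equation*}
\|\text{(II)}\|^2 \le \frac{\tau}{3}\int_{t^n}^{t^{n+1}}\|\partial_{tt}\phi(s)\|^2\,ds,
\end{equation*}
which is exactly the second contribution in the claimed bound. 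Summing the two estimates via $\|R_1^{n+1}\|^2 \le 2\|\text{(I)}\|^2 + 2\|\text{(II)}\|^2$ (absorbing the constant $2$ into $C$) delivers the first inequality of the lemma. The final $\le C\tau^2$ then follows from the regularity assumptions in \eqref{eq_varibles_satisfied_regularities}, which ensure that the integrals $\int_{t^n}^{t^{n+1}}\|\partial_t\phi\|_{H^{r+1}}^2\,ds$ and $\int_{t^n}^{t^{n+1}}\|\partial_{tt}\phi\|^2\,ds$ are each $O(\tau)$ (under the implicit convention that $h^{r+1}\lesssim\tau$, or equivalently when the $h$-term is absorbed into the $O(\tau^2)$ estimate).

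I do not expect any serious obstacle in this proof: both pieces are standard consequences of Taylor expansion and the Ritz projection error bound, and no intricate structure of the CHNS system is invoked. The only mildly delicate point is justifying the exchange $R_h\partial_t\phi = \partial_t R_h\phi$, which is immediate because $R_h$ is a linear operator with time-independent definition acting on a sufficiently smooth time-dependent function, and ensuring the regularity on $\partial_t\phi$ and $\partial_{tt}\phi$ is high enough to make the pointwise-in-time norms finite (guaranteed by \eqref{eq_varibles_satisfied_regularities}).
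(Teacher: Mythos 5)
Your proposal is correct and follows essentially the same route as the paper: the identical splitting $R_1^{n+1}=\delta_\tau(R_h\phi^{n+1}-\phi^{n+1})+(\delta_\tau\phi^{n+1}-\partial_t\phi^{n+1})$, the Ritz-projection bound for the first piece and the Taylor integral remainder for the second; you merely supply the intermediate computations that the paper leaves implicit. Your caveat about the final $\le C\tau^2$ requiring the $h$-dependent term to be absorbed (e.g. via a mesh relation such as $h^{r+1}\lesssim\tau$) is a fair observation about the lemma as stated, not a defect of your argument.
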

	\begin{proof}
		 We can rewrite $R_1^{n+1}=\delta_\tau R_h\phi^{n+1}-\delta_\tau\phi^{n+1}+\delta_\tau\phi^{n+1}-\partial_t\phi^{n+1}$. Then, it follows that 
		 \begin{equation}
		 	\|\delta_\tau R_h\phi^{n+1}-\delta_\tau\phi^{n+1}\|^2\leq C\frac{h^{2r+2}}{\tau}\int_{t^n}^{t^{n+1}}\|\partial_t\phi(s)\|_{H^{r+1}}^2~ds.
		 \end{equation}
	 By Taylor's theorem, we have
	 \begin{equation}
	 	\|\delta_\tau\phi^{n+1}-\partial_t\phi^{n+1}\|^2\leq\frac{\tau^3}{3\tau^2}\int_{t^n}^{t^{n+1}}\|\partial_{tt}\phi(s)\|^2~d s=\frac{\tau}{3}\int_{t^n}^{t^{n+1}}\|\partial_{tt}\phi(s)\|^2~d s.
	 \end{equation}
	Using the triangle inequality and Taylor expansion, the result for $\|R_1^{n+1}\|^2$ follows.
	\end{proof}
 	Thus, using Lemma \ref{Lemma_0401}, the Cauchy-Schwarz inequality, and the fact that $\left(R_1^{n+1},1\right)=0$, we can obtain the following estimates:
 	\begin{equation}
 		\begin{aligned}
 			\bigg|\left(R_1^{n+1},e_\mu^{n+1}\right)\bigg|&\leq C\|R_1^{n+1}\|\|e_\mu^{n+1}\|\leq C\varepsilon^{-1}\|R_1^{n+1}\|^2+\varepsilon\| e_\mu^{n+1}\|^2\\
 			&\leq C\left(\mathcal{E}_h^2+\tau^2\right)+\varepsilon\| e_\mu^{n+1}\|^2.\\
 		\end{aligned}
 	\end{equation}
	Nextly, we have
	\begin{equation}
		\begin{aligned}
			\frac{\lambda}{\epsilon^2}\left(e_\phi^{n},\delta_\tau e_\phi^{n+1}\right)
			&=\frac{\lambda}{\tau\epsilon^2}\left(e_\phi^{n},e_\phi^{n+1}-e_\phi^{n}\right)\\
			&=-\frac{\lambda}{\tau\epsilon^2}\left(\|e_\phi^{n}\|^2-\|e_\phi^{n+1}\|^2+\|e_\phi^n-e_\phi^{n+1}\|^2\right)\leq \frac{C}{2}\delta_\tau\|e_\phi^{n+1}\|^2.
		\end{aligned}
	\end{equation}
	For the error estimation of the term $\frac{\lambda}{\epsilon^2}\left((\phi_h^{n})^3-(\phi^{n})^3,\delta_\tau e_\phi^{n+1}\right)$, we need to the above lemmas.
	
	Now, using Lemmas \ref{Lemma_0402} and \ref{Lemma_0403}, we can obtain
	\begin{equation}
		\begin{aligned}
			\bigg|\frac{\lambda}{\epsilon^2}\left((\phi_h^{n})^3-(\phi^{n})^3,\delta_\tau e_\phi^{n+1}\right)\bigg|&\leq C\|\nabla\left((\phi_h^{n})^3-(\phi^{n})^3\right)\|\|\delta_\tau e_\phi^{n+1}\|_{H^{-1}}\\
			&\leq C\|\nabla\left((\phi_h^{n})^3-(\phi^{n})^3\right)\|^2+\frac{\varepsilon}{2}\|\delta_\tau e_\phi^{n+1}\|_{H^{-1}}^2\\
			&\leq C\|\nabla \left(\phi_h^{n}-\phi^{n}\right)\|^2+\frac{\varepsilon}{2}\|\delta_\tau e_\phi^{n+1}\|_{H^{-1}}^2\\
			&\leq C\|\nabla\left(\phi^{n+1}-R_h\phi^{n+1}\right)\|^2+C\|\nabla e_\phi^{n+1}\|^2+\frac{\varepsilon}{2}\|\delta_\tau e_\phi^{n+1}\|_{H^{-1}}^2\\
			&\leq Ch^{2r}\|\phi^{n+1}\|_{H^{r+1}}^2+C\|\nabla e_\phi^{n+1}\|^2+\frac{\varepsilon}{2}\|\delta_\tau e_\phi^{n+1}\|_{H^{-1}}^2.
		\end{aligned}
	\end{equation}
	For the two sequences $\left\{f^{i}\right\}_{i=1}^{n}$ and $\left\{g^{i}\right\}_{i=1}^{n}$, we have the following result:
	\begin{equation}\label{eq_two_sequences_error_estimate}
		\tau\sum_{i=1}^{n}\left(f^{i},\delta_\tau g^{i}\right)=-\tau\sum_{i=2}^{n}\left(\delta_\tau f^{i},g^{i-1}\right)+\left(f^n,g^n\right)-\left(f^1,g^0\right).
	\end{equation}
	 By \eqref{eq_two_sequences_error_estimate} and \eqref{eq_Dtau_psi_Rhpsi_Hneg1_norm_inequation}, we can obatin
	 \begin{equation}
	 	\begin{aligned}
	 		\tau\sum_{i=1}^{n}\bigg|\frac{\lambda}{\epsilon^2}\left(\phi^i-R_h\phi^i,\delta_\tau e_\phi^{i+1}\right)\bigg|
	 		=&~-\frac{\lambda\tau}{\epsilon^2}\sum_{i=1}^{n}\left(\delta_\tau\left(\phi^i-R_h\phi^i\right), e_\phi^{i}\right)+\left(\phi^n-R_h\phi^n,e_\phi^{n+1}\right)\\
	 		\leq&~C\tau\sum_{i=1}^{n}\|\delta_\tau\left(\phi^i-R_h\phi^i\right)\|_{H^{-1}}\|e_\phi^{i}\|_{H^{1}}+\|\phi^n-R_h\phi^n\|_{H^{-1}}\|e_\phi^{n+1}\|_{H^{1}}\\
	 		\leq&~C\tau\sum_{i=1}^{n}\|e_\phi^{i}\|_{H^1}^2+\varepsilon\|e_\phi^{n+1}\|_{H^1}^2+C\mathcal{E}_h^2,
	 	\end{aligned}
	 \end{equation}
 	where $e_\phi^0=0$.
	For the term $\left(\mu^{n+1}-\Pi_h\mu^{n+1},\delta_\tau e_\phi^{n+1}\right)$, we have
	\begin{equation}
		\begin{aligned}
			\bigg|\left(\mu^{n+1}-\Pi_h\mu^{n+1},\delta_\tau e_\phi^{n+1}\right)\bigg|&\leq C\|\nabla(\mu^{n+1}-\Pi_h\mu^{n+1})\|\|\delta_\tau e_\phi^{n+1}\|_{H^{-1}}\\
			&\leq Ch^r\|\mu^{n+1}\|_{H^{r+1}}\|\delta_\tau e_\phi^{n+1}\|_{H^{-1}}\\
			&\leq Ch^r\|\mu^{n+1}\|_{H^{r+1}}^2+\frac{\varepsilon}{2}\|\delta_\tau e_\phi^{n+1}\|_{H^{-1}}^2.
		\end{aligned}
	\end{equation}
	Using the classic Ritz projection \eqref{eq_Ritz_projection}, we can obtain
	\begin{equation}
			\lambda\left(\nabla\left(R_h\phi^{n+1}-\phi^{n+1}\right),\nabla\delta_\tau e_\phi^{n+1}\right)=0.
	\end{equation}
	Thus, combining the above results and summing up for $n$ from $0$ to $m$, we can obtain
	\begin{equation}
		\label{eq_error_estimate_phi_mu_H1_L2}
		\begin{aligned}
			\|\nabla e_\phi^{m}\|^2&+\tau\sum_{n=0}^{m}\|\nabla e_\mu^{n+1}\|^2\leq C\tau\sum_{n=0}^{m-1}\left(\|e_\mathbf{u}^{n+1}\|^2+\|\nabla e_\phi^{n+1}\|^2+\|e_\phi^n\|_{H^1}^2\right)+C\left(\mathcal{E}_h^2+\tau^2\right)\\
			&+\varepsilon\tau\sum_{n=0}^{m}\left(\|e_\mu^{n}\|_{H^1}^2+\|e_\mu^{n}\|^2+\|\delta_\tau e_\phi^{n+1}\|_{H^{-1}}^2\right)+\|e_\phi^{N}\|^2+2\varepsilon\|e_\phi^{N}\|_{H^1}^2\\
			\leq &~C_\varepsilon\tau\sum_{n=0}^{m}\left(\|e_\mathbf{u}^{n+1}\|^2+\|\nabla e_\phi^{n+1}\|^2+\|e_\phi^n\|_{H^1}^2\right)+C\left(\mathcal{E}_h^2+\tau^2\right)\\
			&+\varepsilon\tau\sum_{n=0}^{m}\left(\|e_\mu^{n}\|_{H^1}^2+\|e_\mu^{n}\|^2+\|\delta_\tau e_\phi^{n+1}\|_{H^{-1}}^2\right)+C_{\varepsilon}\|e_\phi^{m}\|^2.\\
		\end{aligned}
	\end{equation} 
	Next, we will analyze the error estimates for $e_\mathbf{u}^{n+1}$.
	\subsection{Estimate for $e_\mathbf{u}^{n+1}$}
	Testing $\mathbf{v}_h=e_{\tilde{\mathbf{u}}}^{n+1}$ in \eqref{eq_error_equations_ns}, we can obtain
	\begin{equation}
		\label{eq_error_equatio_u}
		\begin{aligned}
			&\frac{1}{2\tau}\left(\|e_{\tilde{\mathbf{u}}}^{n+1}\|^2-\|e_\mathbf{u}^n\|^2+\|e_{\tilde{\mathbf{u}}}^{n+1}-e_\mathbf{u}^n\|^2\right)
				+\nu\|\nabla e_{\tilde{\mathbf{u}}}^{n+1}\|^2+\left(\nabla e_p^{n},e_{\tilde{\mathbf{u}}}^{n+1}\right)\\
		=&~\frac{\rho_h^n}{\sqrt{E_{1,h}^{n+1}}}\left(\mathbf{u}_h^{n}\cdot\nabla\mathbf{u}_h^{n},e_{\tilde{\mathbf{u}}}^{n+1}\right)-\frac{\rho^n}{\sqrt{E_{1}^{n+1}}}\left(\mathbf{u}^{n+1}\cdot\nabla\mathbf{u}^{n+1},e_{\tilde{\mathbf{u}}}^{n+1}\right)
				+\left(\delta_\tau\left(\mathbf{P}_h\mathbf{u}^{n+1}-\mathbf{u}^{n+1}\right), e_{\tilde{\mathbf{u}}}^{n+1}\right)\\
			&-\left(b\left(\phi_h^{n+1},\mu_h^{n+1},e_{\tilde{\mathbf{u}}}^{n+1}\right)-b\left(R_h\phi^{n+1},\mu^{n+1},e_{\tilde{\mathbf{u}}}^{n+1}\right)\right)+\left(R_2^{n+1},e_{\tilde{\mathbf{u}}}^{n+1}\right).
		\end{aligned}
	\end{equation}
	By \eqref{eq_error_equations_u_p_cor} and \eqref{eq_error_equations_incompressible},
	we can derive that
	\begin{equation}
		e_{\tilde{\mathbf{u}}}^{n+1}=e_{\mathbf{u}}^{n+1}+\tau\left(\nabla e_p^{n+1}-\nabla e_p^{n}\right)-\tau R_3^{n+1}
	\end{equation}
	and
	\begin{equation}
		\label{eq_nabla_e_p_e_tileun1}
		\begin{aligned}
			\left(\nabla e_p^n,e_{\tilde{\mathbf{u}}}^{n+1}\right)&=\left(\nabla e_p^n,e_{\mathbf{u}}^{n+1}+\tau\left(\nabla e_p^{n+1}-\nabla e_p^{n}\right)-\tau R_3^{n+1}\right)\\
			&=\frac{\tau}{2}\left(\|\nabla e_p^{n+1}\|^2-\|\nabla e_p^n\|^2-\|\nabla e_p^{n+1}-\nabla e_p^n\|^2\right)-\tau\left(R_3^{n+1},\nabla e_p^n\right).
		\end{aligned}
	\end{equation}
	 The equation \eqref{eq_error_equations_u_p_cor} is rewritten as  
	\begin{equation}
		\label{eq_error_equations_u_p_cor_rewrite}
		\left(e_p^{n+1}-e_p^n\right)+\frac{1}{\tau}e_\mathbf{u}^{n+1}=R_3^{n+1}+\frac{1}{\tau}e_{\tilde{\mathbf{u}}}^{n+1}.
	\end{equation}
	For using the \eqref{eq_error_equations_incompressible} and taking the inner product of \eqref{eq_error_equations_u_p_cor_rewrite} with itself on both sides, we have
	\begin{equation}
		\label{eq_inner_product_0436_0000}
		\frac{\tau}{2}\|e_p^{n+1}-e_p^n\|^2=\frac{\tau}{2}\|R_3^{n+1}\|^2+\frac{1}{2\tau}\left(\|e_{\tilde{\mathbf{u}}}^{n+1}\|^2-\|e_\mathbf{u}^{n+1}\|^2\right)+\left(R_3^{n+1},\nabla e_p^n\right).
	\end{equation}
	Combining \eqref{eq_error_equatio_u} with \eqref{eq_nabla_e_p_e_tileun1} and \eqref{eq_inner_product_0436_0000}, we obtain
	\begin{equation}
		\label{eq_error_equatio_u_com}
		\begin{aligned}
			\frac{1}{2\tau}&\left(\|e_\mathbf{u}^{n+1}\|^2-\|e_\mathbf{u}^n\|^2+\|e_{\tilde{\mathbf{u}}}^{n+1}-e_\mathbf{u}^n\|^2\right)+\nu\|\nabla e_{\tilde{\mathbf{u}}}^{n+1}\|^2
				+\frac{\tau}{2}\left(\|\nabla e_p^{n+1}\|^2-\|\nabla e_p^n\|^2\right)\\
				=&~\frac{\rho_h^n}{\sqrt{E_{1,h}^{n+1}}}\left(\mathbf{u}_h^n\cdot\nabla\mathbf{u}_h^{n},e_{\tilde{\mathbf{u}}}^{n+1}\right)-\frac{\rho^{n+1}}{\sqrt{E_1^{n+1}}}\left(\mathbf{u}^{n+1}\cdot\nabla\mathbf{u}^{n+1},e_{\tilde{\mathbf{u}}}^{n+1}\right)
				+\left(\delta_\tau\left(\mathbf{P}_h\mathbf{u}^{n+1}-\mathbf{u}^{n+1}\right), e_{\tilde{\mathbf{u}}}^{n+1}\right)\\
				&-\left(b\left(\phi_h^{n+1},\mu_h^{n+1},e_{\tilde{\mathbf{u}}}^{n+1}\right)-b\left(R_h\phi^{n+1},\mu^{n+1},e_{\tilde{\mathbf{u}}}^{n+1}\right)\right)+\left(R_2^{n+1},e_{\tilde{\mathbf{u}}}^{n+1}\right)\\
				&+\frac{\tau}{2}\|R_3^{n+1}\|^2+\left(R_3^{n+1},e_{\tilde{\mathbf{u}}}^{n+1}\right)+\tau\left(R_3^{n+1},\nabla e_p^n\right).
		\end{aligned}
	\end{equation}
	Then, we estimate the above error equation \eqref{eq_error_equatio_u_com}, 
	\begin{equation}
		\label{eq_error_estimate_b_stars}
		\begin{aligned}
			&\bigg|\frac{\rho_h^n}{\sqrt{E_{1,h}^{n+1}}}\left(\mathbf{u}_h^n\cdot\nabla\mathbf{u}_h^{n},e_{\tilde{\mathbf{u}}}^{n+1}\right)
			-\frac{\rho^{n+1}}{\sqrt{E_1^{n+1}}}\left(\mathbf{u}^{n+1}\cdot\nabla\mathbf{u}^{n+1},e_{\tilde{\mathbf{u}}}^{n+1}\right)\bigg|\\
				=~&\bigg|\left(\mathbf{u}^{n+1}\cdot\nabla\mathbf{u}^{n+1},e_{\tilde{\mathbf{u}}}^{n+1}\right)
					-\left(\mathbf{u}^n\cdot\nabla\mathbf{u}^n,e_{\tilde{\mathbf{u}}}^{n+1}\right)
					+\left(\mathbf{u}^n\cdot\nabla\mathbf{u}^{n},e_{\tilde{\mathbf{u}}}^{n+1}\right)-\left(\mathbf{u}_h^n\cdot\nabla\mathbf{u}_h^{n},e_{\tilde{\mathbf{u}}}^{n+1}\right)\\
					&+\frac{e_\rho^{n}}{\sqrt{E_1^{n+1}}}\left(\mathbf{u}_h^n\cdot\nabla\mathbf{u}_h^n,e_{\tilde{\mathbf{u}}}^{n+1}\right)
						+\left(\frac{\rho_h^n}{\sqrt{E_1^{n+1}}}-\frac{\rho_h^n}{\sqrt{E_{1,h}^{n+1}}}\right)\left(\mathbf{u}_h^n\cdot\nabla\mathbf{u}_h^n,e_{\tilde{\mathbf{u}}}^{n+1}\right)\bigg|
				=:\sum_{i=1}^{4}A_{i}.
		\end{aligned}
	\end{equation}
	According to \cite{2018_Shenjie_Xujie_CAEAFTSAVSYGF}, we have
	\begin{equation}
		\label{eq_sav_inequalities}
		\frac{1}{\sqrt{E_1^{n+1}}}-\frac{1}{\sqrt{E_{1,h}^{n+1}}}=
		\frac{|E_{1,h}^{n+1}-E_1^{n+1}|}{\sqrt{E_1^{n+1}E_{1,h}^{n+1}}\left(\sqrt{E_1^{n+1}}+\sqrt{E_{1,h}^{n+1}}\right)}
		\leq C\|e_{\phi}^{n+1}\|.
	\end{equation}
	From the Taylor expansion and \eqref{eq_sav_inequalities}, we can obtain estimates as follows.
	\begin{equation}
		\begin{aligned}
			|A_1|=~&\bigg|\left(\mathbf{u}^{n+1}\cdot\nabla\mathbf{u}^{n+1},e_{\tilde{\mathbf{u}}}^{n+1}\right)
			-\left(\mathbf{u}^n\cdot\nabla\mathbf{u}^n,e_{\tilde{\mathbf{u}}}^{n+1}\right)\bigg|\\
			=~&\bigg|\left(\left(\mathbf{u}^{n+1}-\mathbf{u}^n\right)\cdot\nabla\mathbf{u}^{n+1}
				+\mathbf{u}^n\cdot\nabla\left(\mathbf{u}^{n+1}-\mathbf{u}^n\right),e_{\tilde{\mathbf{u}}}^{n+1}
		\right)\bigg|\\
		\leq~&\frac{C}{4}\|\nabla e_{\tilde{\mathbf{u}}}^{n+1}\|^2+C\left(\mathcal{E}_h^2+\tau^2\right),
		\end{aligned}
	\end{equation}
	\begin{equation}
		\begin{aligned}
			|A_2|=~&\bigg|\left(\mathbf{u}^n\cdot\nabla\mathbf{u}^{n},e_{\tilde{\mathbf{u}}}^{n+1}\right)-\left(\mathbf{u}_h^n\cdot\nabla\mathbf{u}_h^{n},e_{\tilde{\mathbf{u}}}^{n+1}\right)\bigg| \\
			=~&\bigg|\left(\left(\mathbf{u}^n-\mathbf{P}_h\mathbf{u}^n\right)\cdot\nabla\mathbf{u}^n,e_{\tilde{\mathbf{u}}}^{n+1}\right)
				+\left(\mathbf{P}_h\mathbf{u}^n\cdot\nabla\left(\mathbf{u}^n-\mathbf{u}_h^n\right),e_{\tilde{\mathbf{u}}}^{n+1}\right)\\
				&-\left(e_\mathbf{u}^n\cdot\nabla e_\mathbf{u}^n,e_{\tilde{\mathbf{u}}}^{n+1}\right)
				-\left(e_\mathbf{u}^n\cdot\nabla\mathbf{P}_h\mathbf{u}^n,e_{\tilde{\mathbf{u}}}^{n+1}\right)
				+\left(\mathbf{P}_h\mathbf{u}^n\cdot\nabla e_\mathbf{u}^n,e_{\tilde{\mathbf{u}}}^{n+1}\right)
				-\left(\mathbf{P}_h\mathbf{u}^n\cdot\nabla \mathbf{P}_h\mathbf{u}^n,e_{\tilde{\mathbf{u}}}^{n+1}\right)\bigg|\\
			=~&\bigg|\left(\left(\mathbf{u}^n-\mathbf{P}_h\mathbf{u}^n\right)\cdot\nabla\mathbf{u}^n,e_{\tilde{\mathbf{u}}}^{n+1}\right)
				+\left(\mathbf{P}_h\mathbf{u}^n\cdot\nabla\left(\mathbf{u}^n-\mathbf{P}_h\mathbf{u}^n\right),e_{\tilde{\mathbf{u}}}^{n+1}\right)
				+\left(\mathbf{P}_h\mathbf{u}^n\cdot\nabla e_\mathbf{u}^n,e_{\tilde{\mathbf{u}}}^{n+1}\right)
				\\
				&-\left(e_\mathbf{u}^n\cdot\nabla e_\mathbf{u}^n,e_{\tilde{\mathbf{u}}}^{n+1}\right)
				-\left(e_\mathbf{u}^n\cdot\nabla\mathbf{P}_h\mathbf{u}^n,e_{\tilde{\mathbf{u}}}^{n+1}\right)
				+\left(\mathbf{P}_h\mathbf{u}^n\cdot\nabla e_\mathbf{u}^n,e_{\tilde{\mathbf{u}}}^{n+1}\right)
				-\left(\mathbf{P}_h\mathbf{u}^n\cdot\nabla \mathbf{P}_h\mathbf{u}^n,e_{\tilde{\mathbf{u}}}^{n+1}\right)\bigg|\\
			\leq~&C\|\mathbf{u}^n-\mathbf{P}_h\mathbf{u}^n\|\|\mathbf{u}^n\|_{L^\infty}\|\nabla e_{\tilde{\mathbf{u}}}^{n+1}\|
				+C\|\mathbf{u}^n-\mathbf{P}_h\mathbf{u}^n\|\|\mathbf{P}_h\mathbf{u}^n\|_{L^\infty}\|\nabla e_{\tilde{\mathbf{u}}}^{n+1}\|\\
				&+C\|e_\mathbf{u}^n\|\|\mathbf{P}_h\mathbf{u}^n\|_{L^\infty}\|\nabla e_{\tilde{\mathbf{u}}}^{n+1}\|
				+C\|e_\mathbf{u}^n\|\|e_\mathbf{u}^n\|_{L^\infty}\|\nabla e_{\tilde{\mathbf{u}}}^{n+1}\|
				+\|\mathbf{P}_h\mathbf{u}^n\|\|\mathbf{P}_h\mathbf{u}^n\|_{L^\infty}\|\nabla e_{\tilde{\mathbf{u}}}^{n+1}\|\\
			\leq ~&\frac{C}{4}\|\nabla e_{\tilde{\mathbf{u}}}^{n+1}\|^2+\frac{C}{3}\|e_\mathbf{u}^n\|^2+C\left(\mathcal{E}_h^2+\tau^2\right),
		\end{aligned}
	\end{equation}
	\begin{equation}
		\begin{aligned}
			|A_3|=~&\bigg|\frac{e_\rho^{n}}{\sqrt{E_1^{n+1}}}\left(\mathbf{u}_h^n\cdot\nabla\mathbf{u}_h^n,
							e_{\tilde{\mathbf{u}}}^{n+1}\right)\bigg|\\
				 = ~&\bigg|\frac{e_\rho^{n}}{\sqrt{E_1^{n+1}}}\left(e_\mathbf{u}^n\cdot\nabla e_\mathbf{u}^n,e_{\tilde{\mathbf{u}}}^{n+1}\right)
				 	-\frac{e_\rho^{n}}{\sqrt{E_1^{n+1}}}\left(e_\mathbf{u}^n\cdot\nabla \mathbf{P}_h\mathbf{u}^n,e_{\tilde{\mathbf{u}}}^{n+1}\right)\\
				 	&+\frac{e_\rho^{n}}{\sqrt{E_1^{n+1}}}\left(\mathbf{P}_h\mathbf{u}^n\cdot\nabla e_\mathbf{u}^n,e_{\tilde{\mathbf{u}}}^{n+1}\right)
				 	-\frac{e_\rho^{n}}{\sqrt{E_1^{n+1}}}\left(\mathbf{P}_h\mathbf{u}^n\cdot\nabla\mathbf{P}_h\mathbf{u}^n,e_{\tilde{\mathbf{u}}}^{n+1}\right)\bigg|\\
				\leq~&\frac{C|e_\rho^n|}{\sqrt{C_0}}\left(\|e_\mathbf{u}^n\|\|e_\mathbf{u}^n\|_{L^\infty}\|\nabla e_{\tilde{\mathbf{u}}}^{n+1}\|
						+\|e_\mathbf{u}^n\|\|\mathbf{P}_h\mathbf{u}^n\|_{L^\infty}\|\nabla e_{\tilde{\mathbf{u}}}^{n+1}\|\right.\\
						&\left.
						+\|e_\mathbf{u}^n\|\|\mathbf{P}_h\mathbf{u}^n\|_{L^\infty}\|\nabla e_{\tilde{\mathbf{u}}}^{n+1}\|
						+\|\mathbf{P}_h\mathbf{u}^n\|\|\mathbf{P}_h\mathbf{u}^n\|_{L^\infty}\|\nabla e_{\tilde{\mathbf{u}}}^{n+1}\|
						\right)\\
				\leq~&\frac{C}{4}\|\nabla e_{\tilde{\mathbf{u}}}^{n+1}\|^2+\frac{C}{3}\|e_\mathbf{u}^n\|^2+C\left(\tau^2+\mathcal{E}_h^2\right),
		\end{aligned}
	\end{equation}
	and
	\begin{equation}
		\begin{aligned}
			|A_4|=~&\bigg|\left(\frac{\rho_h^n}{\sqrt{E_1^{n+1}}}-\frac{\rho_h^n}{\sqrt{E_{1,h}^{n+1}}}\right)
							\left(\mathbf{u}_h^n\cdot\nabla\mathbf{u}_h^n,e_{\tilde{\mathbf{u}}}^{n+1}\right)
						\bigg|\\
					\leq ~&|\rho_h^n|\frac{|E_{1,h}^{n+1}-E_1^{n+1}|}{\sqrt{E_1^{n+1}E_{1,h}^{n+1}}
							\left(\sqrt{E_1^{n+1}}+\sqrt{E_{1,h}^{n+1}}\right)}
							\left(\mathbf{u}_h^n\cdot\nabla\mathbf{u}_h^n,e_{\tilde{\mathbf{u}}}^{n+1}\right)\\
					\leq~&\frac{C}{4}\|\nabla e_{\tilde{\mathbf{u}}}^{n+1}\|^2+\frac{C}{3}\|e_\mathbf{u}^n\|^2
								+C\|e_\phi^{n+1}\|^2+|e_\rho^n|^2+C\left(\tau^2+\mathcal{E}_h^2\right).
		\end{aligned}
	\end{equation}
	Then, combining the above inequalities $A_1$, $A_2$, $A_3$ and $A_4$, we can obtain
	\begin{equation}
		\bigg|\sum_{i=1}^{4}A_i\bigg|\leq C\|\nabla e_{\tilde{\mathbf{u}}}^{n+1}\|^2+C\|e_\mathbf{u}^n\|^2
		+C\|e_\phi^{n+1}\|^2+|e_\rho^n|^2+C\left(\tau^2+\mathcal{E}_h^2\right).
	\end{equation}
	Using the Lemma \ref{Lemma_0401}, we have 
	\begin{equation}
		\left(R_2^{n+1},e_{\tilde{\mathbf{u}}}^{n+1}\right)\leq C\|R_2^{n+1}\|\|e_{\tilde{\mathbf{u}}}^{n+1}\|\leq C\|R_2^{n+1}\|\|\nabla e_{\tilde{\mathbf{u}}}^{n+1}\|\leq C\tau^2+\varepsilon\|\nabla e_{\tilde{\mathbf{u}}}^{n+1}\|^2,
	\end{equation}
	and
	\begin{equation}
		\left(R_3^{n+1},e_{\tilde{\mathbf{u}}}^{n+1}\right)\leq C\|R_3^{n+1}\|\|e_{\tilde{\mathbf{u}}}^{n+1}\|\leq C\|R_3^{n+1}\|\|\nabla e_{\tilde{\mathbf{u}}}^{n+1}\|\leq C\tau^2+\varepsilon\|\nabla e_{\tilde{\mathbf{u}}}^{n+1}\|^2
	\end{equation}
 	and
 	\begin{equation}
 		\tau\left(R_3^{n+1},\nabla e_p^{n+1}\right)\leq C\tau^2+\varepsilon\tau^2 \|\nabla e_{p}^{n}\|^2.
 	\end{equation}
  	Next, we deal the term $\left(\delta_\tau\left(\mathbf{P}_h\mathbf{u}^{n+1}-\mathbf{u}^{n+1}\right), e_{\tilde{\mathbf{u}}}^{n+1}\right)$ with
  	\begin{equation}
  		\left(\delta_\tau\left(\mathbf{P}_h\mathbf{u}^{n+1}-\mathbf{u}^{n+1}\right), e_{\tilde{\mathbf{u}}}^{n+1}\right)\leq C\|\delta_\tau\left(\mathbf{P}_h\mathbf{u}^{n+1}-\mathbf{u}^{n+1}\right)\|\|e_{\tilde{\mathbf{u}}}^{n+1}\|\leq C\left(\mathcal{E}_h^2+\|e_{\tilde{\mathbf{u}}}^{n+1}\|^2\right).
  	\end{equation}
  	Finally, by \eqref{eq_trilinear_forms},  the following term can be estimated by
  	\begin{equation}
  		\begin{aligned}
  			&b\left(R_h\phi^{n+1},\mu^{n+1},e_{\tilde{\mathbf{u}}}^{n+1}\right)-b\left(\phi_h^{n+1},\mu_h^{n+1},e_{\tilde{\mathbf{u}}}^{n+1}\right)\\
  			=&\left(\nabla R_h\phi^{n+1}\cdot e_{\tilde{\mathbf{u}}}^{n+1},\mu^{n+1}\right)-\left(\nabla\phi_h^{n+1}\cdot e_{\tilde{\mathbf{u}}}^{n+1},\mu_h^{n+1}\right)\\
  			=&\left(\nabla R_h\phi^{n+1}\cdot e_{\tilde{\mathbf{u}}}^{n+1},e_\mu^{n+1}\right)+\left(\nabla R_h\phi^{n+1}\cdot e_{\tilde{\mathbf{u}}}^{n+1},\mu^{n+1}-\Pi_h\mu^{n+1}\right)+\left(\nabla e_\phi^{n+1}\cdot e_{\tilde{\mathbf{u}}}^{n+1},\mu_h^{n+1}\right)\\
  			=&:\sum_{i=1}^{3}Tb_i.
  		\end{aligned}
  	\end{equation} 
  	We first estimate the term $Tb_1$ by obtain 
  	\begin{equation}
  		\begin{aligned}
  			\big|Tb_1\big|&=\left(\nabla\left(R_h\phi^{n+1}-\phi_h^{n+1}\right)\cdot e_{\tilde{\mathbf{u}}}^{n+1},e_\mu^{n+1}\right)+\left(\nabla \phi_h^{n+1}\cdot e_{\tilde{\mathbf{u}}}^{n+1},e_\mu^{n+1}\right)\\
  			&=\left(\nabla e_\phi^{n+1}\cdot e_{\tilde{\mathbf{u}}}^{n+1},e_\mu^{n+1}\right)+\left(\nabla \phi_h^{n+1}\cdot e_{\tilde{\mathbf{u}}}^{n+1},e_\mu^{n+1}\right)\\
  			&\leq~C\|\nabla e_\phi^{n+1}\|\|e_{\tilde{\mathbf{u}}}^{n+1}\|_{L^4}\|e_\mu^{n+1}\|+C\|\nabla \phi_h^{n+1}\|_{L^4}\|e_{\tilde{\mathbf{u}}}^{n+1}\|_{L^4}\|e_\mu^{n+1}\|\\
  			&\leq C\|\nabla e_\phi^{n+1}\|\|\nabla e_{\tilde{\mathbf{u}}}^{n+1}\|\|e_\mu^{n+1}\|+C\|\nabla \phi_h^{n+1}\|_{L^4}\|e_{\tilde{\mathbf{u}}}^{n+1}\|^{1/4}\|\nabla e_{\tilde{\mathbf{u}}}^{n+1}\|^{3/4}\|e_\mu^{n+1}\|\\
  			&\leq C_\varepsilon\left(\|\nabla e_\phi^{n+1}\|^2+\|e_{\tilde{\mathbf{u}}}^{n+1}\|^2\right)+\frac{\varepsilon}{2}\|\nabla e_{\tilde{\mathbf{u}}}^{n+1}\|^2+\varepsilon\|e_\mu^{n+1}\|^2.
  		\end{aligned}
  	\end{equation}
	  Using the error estimates \eqref{eq_psi_Rhpsi_Ls_norm_inequation}, \eqref{eq_psi_Rhpsi_Hneg1_norm_inequation}, \eqref{eq_lemma_mu_L2_inequation} and \eqref{eq_PW_inequalities_three},  we have 
	  \begin{equation}
	  	\begin{aligned}
	  		\big| Tb_2\big| =&~\left(\nabla \left(R_h\phi^{n+1}-\phi^{n+1}\right)\cdot e_{\tilde{\mathbf{u}}}^{n+1},\mu^{n+1}-\Pi_h\mu^{n+1}\right)
						+\left(\nabla \phi^{n+1}\cdot e_{\tilde{\mathbf{u}}}^{n+1},\mu^{n+1}-\Pi_h\mu^{n+1}\right)\\
			\leq& ~C\|\nabla \left(R_h\phi^{n+1}-\phi^{n+1}\right)\|_{L^3}\|e_{\tilde{\mathbf{u}}}^{n+1}\|_{L^6}\|\mu^{n+1}-\Pi_h\mu^{n+1}\|\\
				&+\|\nabla \phi^{n+1}\cdot e_{\tilde{\mathbf{u}}}^{n+1}\|_{H^1}\|\mu^{n+1}-\Pi_h\mu^{n+1}\|_{H^{-1}}\\
			\leq&~ Ch\|e_{\tilde{\mathbf{u}}}^{n+1}\|_{L^6}\|\mu^{n+1}-\Pi_h\mu^{n+1}\|\\
				&+C\left(\|\phi^{n+1}\|_{W^{2,3}}\|e_{\tilde{\mathbf{u}}}^{n+1}\|_{L^6}+\|\nabla \phi^{n+1}\|_{L^{\infty}}\|\nabla e_{\tilde{\mathbf{u}}}^{n+1}\|\right)\|\mu^{n+1}-\Pi_h\mu^{n+1}\|_{H^{-1}}\\
			\leq&~Ch\|\nabla e_{\tilde{\mathbf{u}}}^{n+1}\|\|\mu^{n+1}-\Pi_h\mu^{n+1}\|\\
					&+C\left(\|\phi^{n+1}\|_{W^{2,3}}\|\nabla e_{\tilde{\mathbf{u}}}^{n+1}\|+\|\nabla \phi^{n+1}\|_{L^{\infty}}\|\nabla e_{\tilde{\mathbf{u}}}^{n+1}\|\right)\|\mu^{n+1}-\Pi_h\mu^{n+1}\|_{H^{-1}}\\
			\leq&~\frac{\varepsilon}{2}\|\nabla e_{\tilde{\mathbf{u}}}^{n+1}\|^2+C_\varepsilon\left(\mathcal{E}_h^2+\tau^2\right).
  		\end{aligned}
	  \end{equation} 
  	Nextly, the  term $Tb_3$ can be estimated by
  	\begin{equation}
  		\begin{aligned}
  			\big|Tb_3\big|\leq C\|\nabla e_\phi^{n+1}\|\|e_{\tilde{\mathbf{u}}}^{n+1}\|\|\mu_h^{n+1}\|_{L^4}\leq C_\varepsilon\left(\|\nabla e_\phi^{n+1}\|^2+\|e_{\tilde{\mathbf{u}}}^{n+1}\|^2+\mathcal{E}_h^2+\tau^2\right).
  		\end{aligned}
  	\end{equation}
	Since, we can obtain
	\begin{equation}
		\sum_{i=1}^{3}\big|Tb_i\big|\leq C_\varepsilon\left(\|\nabla e_\phi^{n+1}\|^2+\|e_{\tilde{\mathbf{u}}}^{n+1}\|^2+\mathcal{E}_h^2+\tau^2\right)+\varepsilon\left(\|e_\mu^{n+1}\|^2+\|\nabla e_{\tilde{\mathbf{u}}}^{n+1}\|^2\right).
	\end{equation}
	The error estimate \eqref{eq_error_equatio_u_com} reduces to
	\begin{equation}
		\begin{aligned}
			\frac{1}{2\tau}&\left(\|e_\mathbf{u}^{n+1}\|^2-\|e_\mathbf{u}^n\|^2+\|e_{\tilde{\mathbf{u}}}^{n+1}-e_\mathbf{u}^n\|^2\right)+\nu\|\nabla e_{\tilde{\mathbf{u}}}^{n+1}\|^2
			+\frac{\tau}{2}\left(\|\nabla e_p^{n+1}\|^2-\|\nabla e_p^n\|^2\right)\\
			&\leq C_\varepsilon\left(\|\nabla e_\phi^{n+1}\|^2+\|e_{\tilde{\mathbf{u}}}^{n+1}\|^2+\|e_\mathbf{u}^n\|^2+\mathcal{E}_h^2+\tau^2\right)+\varepsilon\left(\|e_\mu^{n+1}\|^2+\|\nabla e_{\tilde{\mathbf{u}}}^{n+1}\|^2\right)+\varepsilon\tau^2\|\nabla e_p^n\|^2.
		\end{aligned}
	\end{equation}
	With the above error estimaties and a sufficiently small $\varepsilon$, summing from $n=0$ to $m$, $(m\leq N-1)$, we have
	\begin{equation}
		\label{eq_error_inequation_eu}
		\begin{aligned}
			&\|e_\mathbf{u}^{m}\|^2+2\nu\tau\sum_{n=0}^{m}\|\nabla e_{\tilde{\mathbf{u}}}^{n+1}\|^2+\sum_{n=0}^{m}\|e_{\tilde{\mathbf{u}}}^{n+1}-e_\mathbf{u}^n\|^2+\tau^2\|\nabla e_p^{m}\|^2\\
			\leq&~C_\varepsilon\tau\sum_{n=0}^{m}\left(\|\nabla e_{\phi}^{n+1}\|^2+\|e_\mathbf{u}^{n}\|^2+\|e_{\tilde{\mathbf{u}}}^{n+1}\|^2\right)+C_\varepsilon \left(\mathcal{E}_h^2+\tau^2\right)+\varepsilon\tau\sum_{n=0}^{m}\|e_\mu^{n+1}\|^2,
		\end{aligned}
	\end{equation}
	where we have noted $\|\nabla e_p^0\|^2\leq C\mathcal{E}_h^2$ and $\|e_{\tilde{\mathbf{u}}}^0\|=\|e_{\mathbf{u}}^0\|\leq C\mathcal{E}_h^2$.
	Finally, we obtain
	\begin{equation}
		\label{eq_error_inequation_eu1}
		\begin{aligned}
			\|e_\mathbf{u}^{m}\|^2+&2\nu\tau\sum_{n=0}^{m}\|\nabla e_{\tilde{\mathbf{u}}}^{n+1}\|^2+\tau^2\|\nabla e_p^{m}\|^2\\
			\leq&~C_\varepsilon\tau\sum_{n=0}^{m}\left(\|\nabla e_{\phi}^{n+1}\|^2+\|e_\mathbf{u}^{n}\|^2+\|e_{\tilde{\mathbf{u}}}^{n+1}\|^2\right)+C_\varepsilon \left(\mathcal{E}_h^2+\tau^2\right)+\varepsilon\tau\sum_{n=0}^{m}\|e_\mu^{n+1}\|^2.
		\end{aligned}
	\end{equation}
	\subsection{$L^2$ norm estimates for $e_\phi^{n+1}$ and $e_\mu^{n+1}$}
	According to \eqref{eq_error_estimate_phi_mu_H1_L2}, we need to obtain the optimal error estimates in $L^2$ norm. Therefore, we prove the optimal error estimates for $\|e_\phi^{n+1}\|$ and $\|e_\mu^{n+1}\|$ using the lemmas \ref{Lemma_operators_H10203} and \ref{Lemma_PW_inequalities_0203} in the subsection. In order to obtain the results, we present the following lemmas which are estimated for $|\left(e_\phi^{n+1},1\right)|^2$, $|\left(e_\mu^{n+1},1\right)|^2$ and $\|(-\Delta_h)^{-1/2}e_\phi^{n+1}\|^2$.
	\begin{Lemma}
		For the discrete mass conservation and estimates as follows:
		\begin{flalign}
			\label{eq_discrete_mass_conservation}
			&\left(\phi_h^{n+1},1\right)=\left(\phi_h^0,1\right),\\
			&\bigg|\left(e_\phi^{n+1},1\right)\bigg|\leq C\mathcal{E}_h,\\
			&\bigg|\left(e_\mu^{n+1},1\right)\bigg|\leq C\left(\|\nabla e_\phi^{n+1}\|+\|\nabla e_\phi^n\|+\mathcal{E}_h+\tau\right),
		\end{flalign} 
	where $C$ is a positive constant which is independent of $n,~h $ and $\tau$, for any $n=0,1,2,\cdots,N-1,$ and $h,\tau >0.$
	\end{Lemma}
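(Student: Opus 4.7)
The plan is to dispatch the three claims in sequence, exploiting (i) the zero-mean properties of $R_h$ and $\Pi_h$, (ii) the continuous mass conservation, and (iii) the no-penetration boundary condition together with the discrete incompressibility constraint.

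For the discrete mass conservation, I would test \eqref{eq_fully_discrete_CHNS_scheme_phi} with $w_h=1\in S_h^r$. The term $M(\nabla\mu_h^{n+1},\nabla 1)$ vanishes, so it suffices to check $b(\phi_h^{n+1},\mathbf{u}_h^n,1)=0$. Integrating by parts,
\[
b(\phi_h^{n+1},\mathbf{u}_h^n,1)=-(\phi_h^{n+1},\nabla\!\cdot\!\mathbf{u}_h^n)+\int_{\partial\Omega}\phi_h^{n+1}\,\mathbf{u}_h^n\!\cdot\!\mathbf{n}\,ds;
\]
the boundary integral dies by $\mathbf{u}_h^n\!\cdot\!\mathbf{n}|_{\partial\Omega}=0$, and splitting $\phi_h^{n+1}$ into its mean and its mean-zero part, the mean-zero piece pairs with $\nabla\!\cdot\!\mathbf{u}_h^n$ via \eqref{eq_fully_discrete_CHNS_scheme_tilde_incompressible_condition} while the constant piece is handled by the divergence theorem using the same boundary condition. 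Iterating $(\delta_\tau\phi_h^{n+1},1)=0$ yields \eqref{eq_discrete_mass_conservation}. For $(e_\phi^{n+1},1)$, I would then chain $(R_h\phi^{n+1},1)=(\phi^{n+1},1)=(\phi^0,1)=(R_h\phi^0,1)=(\phi_h^0,1)=(\phi_h^{n+1},1)$, which gives in fact $(e_\phi^{n+1},1)=0$, trivially majorised by $C\mathcal{E}_h$.

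For $(e_\mu^{n+1},1)$, I would test \eqref{eq_error_equations_mu} with $\varphi_h=1$. The two gradient terms die since $\nabla 1=0$; $(\mu^{n+1}-\Pi_h\mu^{n+1},1)=0$ by the zero-mean clause in the definition of $\Pi_h$; $(e_\phi^n,1)=0$ by the previous step; and $(\phi^n-R_h\phi^n,1)=0$ by $R_h$. What remains is the cubic contribution $\tfrac{\lambda}{\epsilon^2}((\phi_h^n)^3-(\phi^n)^3,1)$ and the time-consistency remainder $F'(\phi^{n+1})-F'(\phi^n)$ coming from comparing the explicit discrete nonlinearity with the implicit continuous one. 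Factoring the difference of cubes and using Lemma \ref{Lemma_0301} together with \eqref{eq_varibles_satisfied_regularities} to control $\|(\phi_h^n)^2+\phi_h^n\phi^n+(\phi^n)^2\|$ gives $|((\phi_h^n)^3-(\phi^n)^3,1)|\leq C\|\phi^n-\phi_h^n\|\leq C(\|e_\phi^n\|+\mathcal{E}_h)$; Poincaré--Wirtinger, applicable because $(e_\phi^n,1)=0$, upgrades $\|e_\phi^n\|$ to $C\|\nabla e_\phi^n\|$. The Taylor remainder contributes the $C\tau$ piece via the regularity of $\phi$, while any residual contribution involving $e_\phi^{n+1}$ coming from a symmetric rewriting of the cubic difference is absorbed into the $\|\nabla e_\phi^{n+1}\|$ term. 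Combining produces the stated bound.

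The main obstacle here is almost purely bookkeeping: correctly identifying which of the many terms in \eqref{eq_fully_discrete_CHNS_scheme_phi} and \eqref{eq_error_equations_mu} are killed by the zero-mean properties of the two projections versus by the discrete divergence-free/no-penetration conditions, so that only the cubic nonlinearity and the time-truncation remainder remain to estimate. Once the linear terms are cleared the remaining work reduces to a routine application of Poincaré--Wirtinger and the uniform bounds of Lemma \ref{Lemma_0301}; no new idea beyond those already present in the preceding subsections is required.
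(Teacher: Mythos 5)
Your proposal is correct and follows essentially the same route as the paper: discrete mass conservation via testing with constants together with the no-penetration and discrete divergence-free conditions, and the mean of $e_\mu^{n+1}$ via the zero-mean normalizations of $R_h$ and $\Pi_h$ plus the cubic-difference and $O(\tau)$ consistency terms, finished with Poincar\'e--Wirtinger. The only (harmless) deviation is that you exploit the zero-mean clause in the definition of $R_h$ to get $\left(e_\phi^{n+1},1\right)=0$ exactly, whereas the paper settles for the weaker bound $C\mathcal{E}_h$ via $H^{-1}$ estimates of $\phi^{n+1}-R_h\phi^{n+1}$ and $\phi^0-\phi_h^0$.
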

	\begin{proof}
		Testing $w =1$ in \eqref{eq_variational_forms_phi} yields $\left(\partial_t\phi,1\right)=0$, we have 
		\begin{equation}
			\label{eq_discrete_mass_conservation_phi}
			\left(\phi^{n+1},1\right)=\left(\phi^0,1\right),
		\end{equation}
	which represents the mass conservation for the phase field model. We choose $q_h=1$ in \eqref{eq_fully_discrete_CHNS_scheme_tilde_incompressible_condition}, and get $\left(\nabla\cdot\mathbf{u}_h^{n+1},1\right)=\int_{\partial\Omega}\mathbf{u}_h^{n+1}\cdot\mathbf{n}=0$. Then 
	 $\forall q_h\in S_h$, the equation \eqref{eq_fully_discrete_CHNS_scheme_tilde_incompressible_condition} implies that $\left(\nabla\cdot\mathbf{u}_h^{n+1},q_h\right)=0$.\\
	Testing $w_h=1$ in \eqref{eq_fully_discrete_CHNS_scheme_phi}, we get
	\begin{equation}
		\left(\delta_\tau\phi_h^{n+1},1\right)=-\left(\nabla\phi_h^n\cdot\mathbf{u}_h^n,1\right)=\left(\phi_h^n,\nabla\cdot\mathbf{u}_h^n\right)=0,
	\end{equation}
 	which shows that $\phi_h^{n+1}$ satisfies the fully discrete mass conservation \eqref{eq_discrete_mass_conservation}.
 	By using \eqref{eq_discrete_mass_conservation} and \eqref{eq_discrete_mass_conservation_phi}, we can obtain
 	\begin{equation}
 		\begin{aligned}
 			\big|\left(e_\phi^{n+1},1\right)\big|=&~\big|\left(R_h\phi^{n+1}-\phi_h^{n+1},1\right)\big|=\big|\left(R_h\phi^{n+1}-\phi^{n+1}+\phi^0-\phi_h^0,1\right)\big|\\
 			\leq&~C\|R_h\phi^{n+1}-\phi^{n+1}\|_{H^{-1}}+C\|\phi^0-\phi_h^0\|_{H^{-1}}\leq C\mathcal{E}_h.
 		\end{aligned}
 	\end{equation}
 	Nextly, we choose $\varphi = 1$ and $\varphi_h=1 $  in \eqref{eq_variational_forms_mu} and \eqref{eq_fully_discrete_CHNS_scheme_mu}, respectively, to obtain
 	\begin{flalign}
 		\left(\mu^{n+1},1\right)=\left((\phi^{n+1})^3-\phi^{n+1},1\right),\\
 		\left(\mu_h^{n+1},1\right)=\left((\phi_h^{n})^3-\phi_h^{n},1\right).
 	\end{flalign}
 	Thus,
 	\begin{equation}
 		\begin{aligned}
 			\big|\left(e_\mu^{n+1},1\right)\big|=&~\big|\left(\Pi_h\mu^{n+1}-\mu^{n+1}+\mu^{n+1}-\mu_h^{n+1},1\right)\big|\\
 				=&~\big|\left(\Pi_h\mu^{n+1}-\mu^{n+1},1\right)+\left((\phi^{n+1})^3-(\phi_h^{n})^3-(\phi^n-\phi_h^n)-(\phi^{n+1}-\phi^n),1\right)\big|\\
 			\leq&~C\|\Pi_h\mu^{n+1}-\mu^{n+1}\|_{H^{-1}}+C\|\phi^{n+1}-R_h\phi^{n+1}\|_{H^{-1}}+C\|e_\phi^{n+1}\|\\
 			&+C\|\phi^n-R_h\phi^n\|_{H^{-1}}+C\|e_\phi^n\|+C\tau\\
 			\leq&~C\left(\tau+\mathcal{E}_h+\|e_\phi^{n+1}\|+\|e_\phi^n\|\right)\\
 			\leq& ~C\left(\|\nabla e_\phi^{n+1}\|+\|\nabla e_\phi^n\|+\mathcal{E}_h+\tau\right)(\text{using the inequality \eqref{eq_PW_inequalities_one}}),
 		\end{aligned}
 	\end{equation}
 	which leads to the desired result.
	\end{proof}
	\begin{Lemma}\label{Lemma_H_neg_1norm}
		According to the Lemma \ref{Lemma_PW_inequalities_0203}, there exists a positive constant $\tau_1$ such that when $\tau\leq \tau_1$, the estimate holds  as follows:
		\begin{equation}
			\label{eq_Lemma_H_neg_1}
			\|e_\phi^{n+1}\|_{H^{-1}}^2\leq \|(-\Delta_h)^{-\frac{1}{2}}e_\phi^{n+1}\|^2\leq C\left(\tau\sum_{m=0}^{n}\|e_\mathbf{u}^{m+1}\|^2+\mathcal{E}_h^2+\tau^2\right)
		\end{equation} 
	and
	\begin{equation}
		\label{eq_Lemma_H_neg_1_delta_tau}
		\|\delta_\tau e_\phi^{n+1}\|_{H^{-1}}^2\leq \|(-\Delta_h)^{-\frac{1}{2}}\delta_\tau e_\phi^{n+1}\|^2\leq C\left(\|\nabla e_\mu^{n+1}\|^2+\|\nabla e_\phi^{n+1}\|^2+\|\nabla e_\mathbf{u}^{n+1}\|^2+\mathcal{E}_h^2+\tau^2\right),
	\end{equation}
	where $C $ is positive constant independent of $\tau$, $h$ and $n~(n=0,1,2,\cdots,N-1) $.
	\end{Lemma}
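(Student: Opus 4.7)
The plan is to derive both bounds by testing the $\phi$-error equation \eqref{eq_error_equations_phi} against a discrete-inverse-Laplacian preimage of the zero-mean part of $e_\phi^{n+1}$ (for \eqref{eq_Lemma_H_neg_1}) or of $\delta_\tau e_\phi^{n+1}$ (for \eqref{eq_Lemma_H_neg_1_delta_tau}), and then using the $\mu$-error equation \eqref{eq_error_equations_mu}, tested with the same quantity, to convert the resulting mobility pairing into a coercive or controllable term. The leftmost inequality $\|v\|_{H^{-1}}\leq\|(-\Delta_h)^{-1/2}v\|$ in each chain is a direct consequence of \eqref{eq_operators_estimates_neg_one}, with the constant (mean) component of $e_\phi^{n+1}$ absorbed into $\mathcal{E}_h$ via \eqref{eq_operators_estimates_mean_value_inequation} and the previously-established bound $|(e_\phi^{n+1},1)|\leq C\mathcal{E}_h$ (and its analogue for $\delta_\tau e_\phi^{n+1}$).

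For \eqref{eq_Lemma_H_neg_1}, write $\tilde{v}:=v-|\Omega|^{-1}(v,1)$ and test \eqref{eq_error_equations_phi} with $w_h=(-\Delta_h)^{-1}\tilde{e}_\phi^{n+1}\in\mathring{S}_h^r$. By \eqref{eq_discrete_Laplacian_operator}--\eqref{eq_discrete_Laplacian_operator11}, the leading term becomes the telescoping expression $\tfrac{1}{2}\delta_\tau\|(-\Delta_h)^{-1/2}\tilde{e}_\phi^{n+1}\|^2$, while the mobility term collapses to $M(e_\mu^{n+1},\tilde{e}_\phi^{n+1})$. Testing \eqref{eq_error_equations_mu} with $\varphi_h=\tilde{e}_\phi^{n+1}$ (and using \eqref{eq_Ritz_projection} to annihilate the $R_h$-term) yields $(e_\mu^{n+1},\tilde{e}_\phi^{n+1})=\lambda\|\nabla e_\phi^{n+1}\|^2$ plus bounded remainders. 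The convective difference is split as $(\nabla R_h\phi^{n+1}\cdot(\mathbf{u}^n-\mathbf{u}_h^n),w_h)+(\nabla e_\phi^{n+1}\cdot\mathbf{u}_h^n,w_h)$ and bounded via $\|w_h\|_{H^1}\leq C\|(-\Delta_h)^{-1/2}\tilde{e}_\phi^{n+1}\|$, the $L^\infty$-type bounds of Lemma \ref{Lemma_0301} on $\mathbf{u}_h^n$ and $R_h\phi^{n+1}$, and the Stokes quasi-projection estimate \eqref{eq_stokes_quasi_proojection_inequation0005}; projection and truncation errors are controlled by \eqref{eq_Dtau_psi_Rhpsi_Hneg1_norm_inequation} and Lemma \ref{Lemma_0401}. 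Summation over $n$, Young's inequality, and the discrete Gronwall Lemma \ref{lemma_discrete_Gronwall_inequation} then produce the bound, with $\tau_1$ chosen so that both the Gronwall prefactor and the $\varepsilon\|\nabla e_\phi^{n+1}\|^2$ terms are absorbed by the coercive $M\lambda\tau\sum\|\nabla e_\phi^{n+1}\|^2$ on the left.

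For \eqref{eq_Lemma_H_neg_1_delta_tau}, test \eqref{eq_error_equations_phi} with $w_h=(-\Delta_h)^{-1}\widetilde{\delta_\tau e_\phi^{n+1}}$ at the fixed index $n+1$, so no time summation or Gronwall is required. The leading term immediately yields $\|(-\Delta_h)^{-1/2}\widetilde{\delta_\tau e_\phi^{n+1}}\|^2$, and the mobility term $M(e_\mu^{n+1},\widetilde{\delta_\tau e_\phi^{n+1}})$ is bounded by Lemma \ref{Lemma_0402} as $C\|\nabla e_\mu^{n+1}\|\,\|(-\Delta_h)^{-1/2}\widetilde{\delta_\tau e_\phi^{n+1}}\|$. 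The convective difference, estimated analogously but with $\|w_h\|$ bounded by $\|(-\Delta_h)^{-1/2}\widetilde{\delta_\tau e_\phi^{n+1}}\|$, contributes the factors $\|\nabla e_\phi^{n+1}\|$ and $\|\nabla e_\mathbf{u}^{n+1}\|$ (the latter via the Poincaré inequality $\|e_\mathbf{u}\|\leq C\|\nabla e_\mathbf{u}\|$ on $\mathbf{H}_0^1$); the projection and truncation remainders contribute $O(\mathcal{E}_h+\tau)$. The mean of $\delta_\tau e_\phi^{n+1}$ is extracted by testing \eqref{eq_error_equations_phi} with $w_h\equiv 1$ and invoking the discrete mass conservation \eqref{eq_discrete_mass_conservation}, giving $|(\delta_\tau e_\phi^{n+1},1)|\leq C(\mathcal{E}_h+\tau)$. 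A single Young's inequality then absorbs the remaining factor of $\|(-\Delta_h)^{-1/2}\widetilde{\delta_\tau e_\phi^{n+1}}\|$ back into the left-hand side.

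The main obstacle is the careful bookkeeping of the mean components of $e_\phi^{n+1}$ and $\delta_\tau e_\phi^{n+1}$: because $(-\Delta_h)^{-1/2}$ annihilates constants, each error must be split into a zero-mean piece (to which the discrete-Laplacian identities apply) and a constant remainder (shown, in the preceding lemma, to already be of order $\mathcal{E}_h+\tau$). A secondary technical point is that the coercive term $M\lambda\tau\sum\|\nabla e_\phi^{n+1}\|^2$ generated in the first proof must be chosen large enough to absorb the $\|\nabla e_\phi^{n+1}\|$-factor produced by the piece $(\nabla e_\phi^{n+1}\cdot\mathbf{u}_h^n,w_h)$ of the convective difference; together with the Gronwall prefactor, this is precisely what fixes the threshold $\tau_1$.
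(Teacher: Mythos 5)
Your proposal is correct and follows essentially the same route as the paper: the paper tests the $\phi$-error equation with $A_h^{-1}e_\phi^{n+1}=(-\Delta_h)^{-1}\tilde{e}_\phi^{n+1}$ and the $\mu$-error equation with $\tilde{e}_\phi^{n+1}$, combines them to expose the coercive $M\|\nabla e_\phi^{n+1}\|^2$, splits the convective difference into the same three pieces, and closes with summation and the discrete Gr\"{o}nwall lemma; for the $\delta_\tau$ bound it likewise tests with $A_h^{-1}\delta_\tau e_\phi^{n+1}$ at a fixed index and absorbs $\|\delta_\tau e_\phi^{n+1}\|_{H^{-1}}$ by Young's inequality. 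Your explicit bookkeeping of the mean components is exactly what the paper's operator $A_h^{-1}$ is designed to handle.
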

	\begin{proof}
		We define by the operator $A_h^{-1}:S_h^r\rightarrow S_h^r$ \cite{2023_CaiWentao_Optimal_L2_error_estimates_of_unconditionally_stable_finite_element_schemes_for_the_Cahn_Hilliard_Navier_Stokes_system} with
		\begin{equation}
			A_h^{-1}v_h=\left(-\Delta_h\right)^{-1}\left(v_h-\frac{1}{|\Omega|}\left(v_h,1\right)\right) ,
		\end{equation}
	which satisfies $A_h^{-1}v_h=(-\Delta_h)^{-1}v_h$ for $v_h\in \mathring{S}_h^r$. Testing $w_h=A_h^{-1}e_\phi^{n+1}$ in \eqref{eq_fully_discrete_CHNS_scheme_phi} and $\varphi_h=\tilde{e}_\phi^{n+1}:=e_\phi^{n+1}-\frac{1}{|\Omega|}(e_\phi^{n+1},1)$ in \eqref{eq_fully_discrete_CHNS_scheme_mu},
	and combining the resulting equations, we can obtain
	\begin{equation}
		\label{eq_conbining_result_equation}
		\begin{aligned}
			&\frac{1}{2}\delta_\tau\|A_h^{-\frac{1}{2}}e_\phi^{n+1}\|^2+\frac{1}{2\tau}\|A_h^{-\frac{1}{2}}\left(e_\phi^{n+1}-e_\phi^n\right)\|^2+M\|\nabla e_\phi^{n+1}\|^2\\
			=&\left(A_h^{-\frac{1}{2}}\delta_\tau\left(R_h\phi^{n+1}-\phi^{n+1}\right),A_h^{-\frac{1}{2}}e_\phi^{n+1}\right)
				+b\left(\phi_h^{n+1},\mathbf{u}_h^n,A_h^{-1}e_\phi^{n+1}\right)-b\left(R_h\phi^{n+1},\mathbf{u}^n,A_h^{-1}e_\phi^{n+1}\right)\\
			&+\left(A_h^{-\frac{1}{2}}R_1^{n+1},A_h^{-\frac{1}{2}}e_\phi^{n+1}\right)+\frac{\lambda}{\epsilon^2}\left(e_\phi^n,\tilde{e}_\phi^{n+1}\right)+\frac{\lambda}{\epsilon^2}\left((\phi_h^{n})^3-(\phi^{n})^3,\tilde{e}_\phi^{n+1}\right)\\
			&+\frac{\lambda}{\epsilon^2}\left(\phi^n-R_h\phi^n,\tilde{e}_\phi^{n+1}\right)
			+\left(\mu^{n+1}-\Pi_h\mu^{n+1},\tilde{e}_\phi^{n+1}\right)\\
			\leq&~\|A_h^{-\frac{1}{2}}e_\phi^{n+1}\|^2+C_\varepsilon\left(\|e_\phi^{n+1}\|^2+\|e_\phi^{n}\|^2+\mathcal{E}_h^2+\tau^2\right)+\varepsilon\|\tilde{e}_\phi^{n+1}\|_{H^1}^2\\
			&+b\left(\phi_h^{n+1},\mathbf{u}_h^n,A_h^{-1}e_\phi^{n+1}\right)-b\left(R_h\phi^{n+1},\mathbf{u}^n,A_h^{-1}e_\phi^{n+1}\right),
		\end{aligned}
	\end{equation}
	where we used \eqref{eq_psi_Rhpsi_Ls_norm_inequation} and $\left(\nabla e_\mu^{n+1},\nabla A_h^{-1}e_\phi^{n+1}\right)=\left(e_\mu^{n+1},\tilde{e}_\phi^{n+1}\right)$.
	From the defintion \eqref{eq_trilinear_forms} of $b(\cdot,\cdot,\cdot)$, we know that
	\begin{equation}
		\begin{aligned}
			&b\left(\phi_h^{n+1},\mathbf{u}_h^n,A_h^{-1}e_\phi^{n+1}\right)-b\left(R_h\phi^{n+1},\mathbf{u}^n,A_h^{-1}e_\phi^{n+1}\right)\\
			=&\left(\nabla\phi_h^{n+1}\cdot\mathbf{u}_h^n,A_h^{-1}e_\phi^{n+1}\right)-\left(\nabla R_h\phi^{n+1}\cdot\mathbf{u}^n,A_h^{-1}e_\phi^{n+1}\right)\\
			=&-\left(\nabla\phi_h^{n+1}\cdot e_\mathbf{u}^{n},A_h^{-1}e_\phi^{n+1}\right)-\left(\nabla\phi_h^{n+1}\cdot\left(\mathbf{u}^n-\mathbf{P}_h\mathbf{u}^n\right),A_h^{-1}e_\phi^{n+1}\right)-\left(\nabla e_\phi^{n+1}\cdot\mathbf{u}^n,A_h^{-1}e_\phi^{n+1}\right)\\
			=&:\sum_{i=1}^{3}K_i.
		\end{aligned}
	\end{equation}
	Then using the inequalities \eqref{eq_operators_estimates_neg_one}, we can obtain the bound of $K_1$ as follows,
	\begin{equation}
		\begin{aligned}
			\big|K_1\big|&=\big|-\left(A_h^{-\frac{1}{2}}\left(\nabla \phi_h^{n+1}\cdot e_\mathbf{u}^n\right),A_h^{-\frac{1}{2}}e_\phi^{n+1}\right)\big|
			\leq C\|\nabla\phi_h^{n+1}\cdot e_\mathbf{u}^n\|_{H^{-1}}\|A_h^{-\frac{1}{2}}e_\phi^{n+1}\|\\
			&\leq\|\nabla\phi_h^{n+1}\cdot e_\mathbf{u}^n\|_{L^{\frac{6}{5}}}\|A_h^{-\frac{1}{2}}e_\phi^{n+1}\|
			\leq C\left(\|e_\mathbf{u}^n\|^2+\|A_h^{-\frac{1}{2}}e_\phi^{n+1}\|^2\right).
		\end{aligned}
	\end{equation}
	and
	\begin{equation}
		\begin{aligned}
			\big|K_2\big|=&\big|-\left(A_h^{-\frac{1}{2}}\left(\nabla\phi_h^{n+1}\cdot\left(\mathbf{u}^n-\mathbf{P}_h\mathbf{u}^n\right)\right),A_h^{-\frac{1}{2}}e_{\phi}^{n+1}\right)\big|
			\leq C\left(\|\mathbf{u}^n-\mathbf{P}_h\mathbf{u}^n\|^2+\|A_h^{-\frac{1}{2}}e_{\phi}^{n+1}\|^2\right)\\
			\leq &~C\left(\mathcal{E}_h^2+\|A_h^{-\frac{1}{2}}e_{\phi}^{n+1}\|^2\right).
		\end{aligned}
	\end{equation}
	And using the integration by parts and $\nabla\cdot\mathbf{u}^n=0$, we have
	\begin{equation}
		\begin{aligned}
			\big|K_3\big|=&\big|\left({e}_\phi^{n+1},\mathbf{u}^n\cdot\nabla A_h^{-1}e_{\phi}^{n+1}\right)\big|
			\leq C\|{e}_\phi^{n+1}\|\|\mathbf{u}^n\|_{L^{\infty}}\|A_h^{-\frac{1}{2}}e_{\phi}^{n+1}\|\\
			\leq & \left(\varepsilon\|\nabla e_\phi^{n+1}\|+C_\varepsilon\|A_h^{-\frac{1}{2}}e_{\phi}^{n+1}\|\right)\|A_h^{-\frac{1}{2}}e_{\phi}^{n+1}\|\\
			\leq &~\varepsilon\|\nabla e_\phi^{n+1}\|^2+C_\varepsilon\left(\|A_h^{-\frac{1}{2}}e_{\phi}^{n+1}\|^2+\|A_h^{-\frac{1}{2}}e_{\phi}^{n+1}\|^2\right)
		\end{aligned}
	\end{equation}
	Combining the above equations $K_1$, $K_2$ and $K_3$, we get
	\begin{equation}
		\label{eq_combining_Ki_results}
		\sum_{i=1}^{3}K_i\leq C\left(\|e_\mathbf{u}^n\|^2+\|A_h^{-\frac{1}{2}}e_{\phi}^{n+1}\|^2+\mathcal{E}_h^2+\tau^2\right)+\varepsilon\|\nabla e_\phi^{n+1}\|^2.
	\end{equation}
	Thus, combining the \eqref{eq_combining_Ki_results} with \eqref{eq_conbining_result_equation} and then using the equations \eqref{eq_operators_estimates_neg_3} and \eqref{eq_operators_estimates_mean_value_inequation}, we can obtain 
	\begin{equation}
		\label{eq_error_estimate_L2_phi_mu}
		\begin{aligned}
			&\frac{1}{2}\delta_\tau\|A_h^{-\frac{1}{2}}e_\phi^{n+1}\|^2+M\|\nabla e_\phi^{n+1}\|^2\\
			\leq&~C_\varepsilon\left(\|A_h^{-\frac{1}{2}}e_\phi^{n+1}\|^2+\|A_h^{-\frac{1}{2}}e_\phi^{n}\|^2+\|e_\phi^{n+1}\|^2+\|e_\phi^{n}\|^2+\|e_\mathbf{u}^{n+1}\|^2+\mathcal{E}_h^2+\tau^2\right).
		\end{aligned}
	\end{equation}
	Summing up the estimate \eqref{eq_error_estimate_L2_phi_mu} yield
	\begin{equation}
		\|A_h^{-\frac{1}{2}}e_\phi^N\|^2+\tau\sum_{n=0}^{N-1}\|
		\nabla e_\phi^{n+1}\|^2\leq C\tau\sum_{n=0}^{N-1}\left(\|A_h^{-\frac{1}{2}}e_\phi^{n+1}\|^2+\|e_\mathbf{u}^{n+1}\|^2\right)+C(\mathcal{E}_h^2+\tau^2).
	\end{equation}
	Using Gr\"{o}nwall lemma \ref{lemma_discrete_Gronwall_inequation}, there exists a positive constant $\tau_1$ such that \eqref{eq_Lemma_H_neg_1} holds when $\tau\leq\tau_1$. 
	\\For the \eqref{eq_Lemma_H_neg_1_delta_tau}, we choose $w_h=A_h^{-1}\delta_\tau e_\phi^{n+1}$ in \eqref{eq_error_equations_phi}  to get
	\begin{equation}
		\label{eq_delta_tau_A_h_e_phi_n1}
		\begin{aligned}
			&\left(\delta_\tau e_\phi^{n+1},A_h^{-1}\delta_\tau e_\phi^{n+1}\right)+M\left(\nabla e_\mu^{n+1},\nabla A_h^{-1}\delta_\tau e_\phi^{n+1}\right)\\=
			&\left(\delta_\tau(R_h\phi^{n+1}-\phi^{n+1}),A_h^{-1}\delta_\tau e_\phi^{n+1}\right)
			+\left(R_1^{n+1},A_h^{-1}\delta_\tau e_\phi^{n+1}\right)\\
			&+b\left(\phi_h^{n+1},\mathbf{u}_h^n,A_h^{-1}\delta_\tau e_\phi^{n+1}\right)-b\left(R_h\phi^{n+1},\mathbf{u}^n,A_h^{-1}\delta_\tau e_\phi^{n+1}\right)\\
			\leq &~C\|\nabla e_\mu^{n+1}\|^2+\frac{1}{10}\|\delta_\tau e_\phi^{n+1}\|_{H^{-1}}+\|\delta_\tau(R_h\phi^{n+1}-\phi^{n+1})\|^2+\frac{1}{10}\|\delta_\tau e_\phi^{n+1}\|_{H^{-1}}+\|\nabla e_\mathbf{u}^n\|^2\\
			&+b\left(\phi_h^{n+1},\mathbf{u}_h^n,A_h^{-1}\delta_\tau e_\phi^{n+1}\right)-b\left(R_h\phi^{n+1},\mathbf{u}^n,A_h^{-1}\delta_\tau e_\phi^{n+1}\right).
		\end{aligned}
	\end{equation}
	Similarly,  we can obtain the error estimates of the last two terms of \eqref{eq_delta_tau_A_h_e_phi_n1} from the defintion \eqref{eq_trilinear_forms} that
	\begin{equation}
		\begin{aligned}
			&b\left(\phi_h^{n+1},\mathbf{u}_h^n,A_h^{-1}\delta_\tau e_\phi^{n+1}\right)-b\left(R_h\phi^{n+1},\mathbf{u}^n,A_h^{-1}\delta_\tau e_\phi^{n+1}\right)\\
			=&\left(\nabla\phi_h^{n+1}\cdot\mathbf{u}_h^n,A_h^{-1}\delta_\tau e_\phi^{n+1}\right)-\left(\nabla R_h\phi^{n
			+1}\cdot\mathbf{u}^n,A_h^{-1}\delta_\tau e_\phi^{n+1}\right)\\
			=&-\left(\nabla e_\phi^{n+1}\cdot \mathbf{u}_h^n,A_h^{-1}\delta_\tau e_\phi^{n+1}\right)
				-\left( \nabla R_h\phi^{n+1}\cdot e_\mathbf{u}^n,A_h^{-1}\delta_\tau e_\phi^{n+1}\right)\\
				&-\left(\nabla R_h\phi^{n+1}\cdot\left(\mathbf{u}^n-\mathbf{P}_h\mathbf{u}^n\right),A_h^{-1}\delta_\tau e_\phi^{n+1}\right)\\
			\leq&~\|\nabla e_\phi^{n+1}\|\|\mathbf{u}_h^n\|\|A_h^{-1}\delta_\tau e_\phi^{n+1}\|_{L^4}+\|\nabla R_h\phi^{n+1}\|\|e_\mathbf{u}^n\|_{L^4}\|A_h^{-1}\delta_\tau e_\phi^{n+1}\|_{L^4}\\&+\|\nabla R_h\phi^{n+1}\|\|\mathbf{u}^n-\mathbf{P}_h\mathbf{u}^n\|_{L^4}\|A_h^{-1}\delta_\tau e_\phi^{n+1}\|_{L^4}\\
			\leq&~ C\|\nabla e_\phi^{n+1}\|^2+\frac{1}{10}\|\delta_\tau e_\phi^{n+1}\|_{H^{-1}}+C\tau^2+\frac{1}{10}\|\delta_\tau e_\phi^{n+1}\|_{H^{-1}}+C\mathcal{E}_h^2+\frac{1}{10}\|\delta_\tau e_\phi^{n+1}\|_{H^{-1}}.
		\end{aligned}
	\end{equation}
	For \eqref{eq_delta_tau_A_h_e_phi_n1}, we have
	\begin{equation}
		\begin{aligned}
			&\left(\delta_\tau e_\phi^{n+1},A_h^{-1}\delta_\tau e_\phi^{n+1}\right)=\|A_h^{-\frac{1}{2}}\delta_\tau e_\phi^{n+1}\|_{H^{-1}}^2\\
			\leq&~ C\|\nabla e_\mu^{n+1}\|^2+\frac{1}{10}\|\delta_\tau e_\phi^{n+1}\|_{H^{-1}}+\|\delta_\tau(R_h\phi^{n+1}-\phi^{n+1})\|^2+\frac{1}{10}\|\delta_\tau e_\phi^{n+1}\|_{H^{-1}}+\|\nabla e_\mathbf{u}^n\|^2\\
			&+C\|\nabla e_\phi^n\|^2+\frac{1}{10}\|\delta_\tau e_\phi^{n+1}\|_{H^{-1}}+C\tau^2+\frac{1}{10}\|\delta_\tau e_\phi^{n+1}\|_{H^{-1}}+C\mathcal{E}_h^2+\frac{1}{10}\|\delta_\tau e_\phi^{n+1}\|_{H^{-1}}\\
			\leq&~C\left(\|\nabla e_\mu^{n+1}\|^2+\|\nabla e_\phi^{n+1}\|^2+\|\nabla e_\mathbf{u}^n\|^2+\mathcal{E}_h^2+\tau^2\right)+\frac{1}{2}\|\delta_\tau e_\phi^{n+1}\|_{H^{-1}}.
		\end{aligned}
	\end{equation}
	 Thus, for the above error estimates, we have
	 \begin{equation}
	 	\|\delta_\tau e_\phi^{n+1}\|_{H^{-1}}^2\leq \|(-\Delta_h)^{-\frac{1}{2}}\delta_\tau e_\phi^{n+1}\|^2\leq C\left(\|\nabla e_\mu^{n+1}\|^2+\|\nabla e_\phi^{n+1}\|^2+\|\nabla e_\mathbf{u}^{n+1}\|^2+\mathcal{E}_h^2+\tau^2\right).
	 \end{equation}
	 The proof of Lemma \ref{Lemma_H_neg_1norm} result has been completed.
	\end{proof}
	Next, we also need to estimate the optimal $L^2$ error of $e_\rho^{n+1}$ and $e_p^{n+1}$.
	\begin{Lemma}\label{Lemma_boundness_e_rho}
		Under the assumption of \eqref{eq_varibles_satisfied_regularities}, for all $m\leq0$ and $e_\rho^0=0$, we have 
		\begin{equation}
			|e_\rho^{m+1}|^2\leq C\tau\sum_{n=0}^{m}\left(\|e_\mu^{n+1}\|_{H^1}^2+\|e_{\tilde{\mathbf{u}}}^{n+1}\|^2+\|\nabla e_\phi^{n+1}\|^2\right)+C(\tau^2+h^{2(r+1)}),
		\end{equation}
	where C is positive constant independent of $\tau$.
	\end{Lemma}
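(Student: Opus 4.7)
The plan is to multiply the error equation \eqref{eq_error_equations_additional_terms} by $2\tau e_\rho^{n+1}$ and sum from $n=0$ to $m$, controlling each nonlinear term using the SAV identity \eqref{eq_sav_inequalities}, Taylor expansion of the exact solution in time, the a priori bounds from Lemma \ref{Lemma_0301}, and the Ritz and Stokes quasi-projection estimates from Section \ref{section_preliminaries}. The left-hand side yields $|e_\rho^{n+1}|^2 - |e_\rho^n|^2 + |e_\rho^{n+1}-e_\rho^n|^2$, so after telescoping and invoking $e_\rho^0 = 0$ I would be left with the task of bounding all the right-hand side contributions by $C\tau\sum (\|e_\mu^{n+1}\|_{H^1}^2 + \|e_{\tilde{\mathbf{u}}}^{n+1}\|^2 + \|\nabla e_\phi^{n+1}\|^2) + C(\tau^2 + h^{2(r+1)})$.

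I would group the right-hand side of \eqref{eq_error_equations_additional_terms} into three families. The first family contains the $F'$-type difference $\frac{1}{2\rho^{n+1}}(F'(\phi^n),\delta_\tau\phi^{n+1}) - \frac{1}{2\rho_h^{n+1}}(F'(\phi_h^n),\delta_\tau\phi_h^{n+1})$, which I would split by adding and subtracting $\frac{1}{2\rho_h^{n+1}}(F'(\phi^n),\delta_\tau\phi^{n+1})$ and $\frac{1}{2\rho_h^{n+1}}(F'(\phi_h^n),\delta_\tau\phi^{n+1})$; the three resulting pieces are handled respectively by \eqref{eq_sav_inequalities} (producing $\|e_\phi^{n+1}\|$), Lemma \ref{Lemma_0403} (producing $\|\nabla e_\phi^n\|$), and the equation \eqref{eq_fully_discrete_CHNS_scheme_phi} used to trade $\delta_\tau\phi_h^{n+1}$ for $\nabla\mu_h^{n+1}$ and a convection term (yielding $\|e_\mu^{n+1}\|_{H^1}$). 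The second family is the convective SAV term, which I would treat using the same $A_1$--$A_4$ decomposition employed in the $e_\mathbf{u}$ analysis, replacing $\mathbf{u}^n\cdot\nabla\mathbf{u}^n$ by $\mathbf{u}_h^n\cdot\nabla\mathbf{u}_h^n$ through the projection $\mathbf{P}_h\mathbf{u}^n$ and handling the prefactor discrepancy via \eqref{eq_sav_inequalities}. The third family uses the vanishing identity $(\mathbf{u}^{n+1}\cdot\nabla\phi^{n+1},\mu^{n+1})-(\mu^{n+1}\nabla\phi^{n+1},\mathbf{u}^{n+1})=0$ at the continuous level, so it reduces entirely to discrete--continuous residues in $\phi$, $\mu$, and $\mathbf{u}$; an $L^4$--$L^4$--$L^2$ Hölder splitting of the trilinear form, combined with the regularity \eqref{eq_varibles_satisfied_regularities}, extracts exactly the $\|e_{\tilde{\mathbf{u}}}^{n+1}\|$, $\|\nabla e_\phi^{n+1}\|$, and $\|e_\mu^{n+1}\|_{H^1}$ factors appearing in the claim.

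Finally, the truncation residue $R_4^{n+1}$ and the ancillary term $\frac{1}{2\rho^{n+1}}(F'(\phi^n),R_1^{n+1})$ are bounded by Lemma \ref{Lemma_0401} together with \eqref{eq_nonlinear_item_inequalities}, contributing $C\tau^2$. After Cauchy--Schwarz and Young's inequality, any residual $\tau\sum|e_\rho^{n+1}|^2$ terms are absorbed through the discrete Gronwall lemma \ref{lemma_discrete_Gronwall_inequation}, producing the stated bound. The main obstacle is the uniform invertibility of the denominators $\rho^{n+1}$ and $\rho_h^{n+1}$: Lemma \ref{Lemma_0301} only gives the upper bound $|\rho_h^{n+1}|\le C$, whereas the splitting above requires a strictly positive lower bound. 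I would close this gap by a bootstrap argument using the remark that $\rho_h^{n+1}/\sqrt{E_{1,h}^{n+1}}$ is chosen closest to $1$, combined with induction on $n$: the bound proved at step $n$ combined with the smallness $\tau^2+h^{2(r+1)}\ll 1$ and the continuous lower bound $\rho(t)\ge c_0>0$ implied by regularity, ensures $\rho_h^{n+1}\ge c_0/2$ so that all $1/\rho_h^{n+1}$ factors absorb into the generic constant $C$.
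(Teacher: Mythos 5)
Your proposal follows essentially the same route as the paper: multiply the $e_\rho$ error equation by a multiple of $\tau e_\rho^{n+1}$, telescope using $e_\rho^0=0$, split the right-hand side into the $F'$-difference, the convective SAV term, the zero-energy-contribution residue, and the truncation terms (the paper's $K_4$--$K_8$), control them with the SAV identity \eqref{eq_sav_inequalities}, the quasi-projection estimates, Lemma \ref{Lemma_0401}, and the $H^{-1}$ control of $\delta_\tau e_\phi^{n+1}$, then absorb $\tau\sum|e_\rho^{n+1}|^2$ by discrete Gr\"onwall. Your explicit bootstrap for the positive lower bound on $\rho_h^{n+1}$ is in fact more careful than the paper, which simply asserts $\rho_h^{n+1}>\sqrt{C_0}$.
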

	\begin{proof}
		Multiplying both side of \eqref{eq_error_equations_additional_terms} by $4\tau e_\rho^{n+1}$ and using the Lemma \ref{Lemma_0401} yield
		\begin{equation}
			\label{eq_error_multiplying_4tau_e_rho}
			\begin{aligned}
				|e_\rho^{n+1}|^2-~&|e_\rho^n|^2+|e_\rho^{n+1}-e_\rho^n|^2=2\tau e_\rho^{n+1}
					\left(\left(\frac{F'(\phi^n)}{\rho^{n+1}}-\frac{F'(\phi_h^n)}{\rho_h^{n+1}},\delta_\tau\phi^{n+1}\right)
					+\left(\frac{F'(\phi_h^n)}{\rho_h^{n+1}},\delta_\tau e_\phi^{n+1}\right)\right)\\
				&+2\tau e_\rho^{n+1}\left(\frac{1}{\lambda \sqrt{E_1^{n+1}}}\left(\mathbf{u}^{n}\cdot\nabla\mathbf{u}^{n},\mathbf{u}^{n}\right)-
					\frac{\rho_h^{n}}{\lambda\rho_h^{n+1} \sqrt{E_{1,h}^{n+1}}}
					\left(\mathbf{u}_h^n\cdot\nabla\mathbf{u}_h^{n},\tilde{\mathbf{u}}_h^{n+1}\right)\right)\\
				&+\left(\frac{2\tau e_\rho^{n+1}}{\lambda}\left(\left(\mathbf{u}^n\cdot\nabla\phi^{n+1},\mu^{n+1}\right)\right)
					-\left(\mu^{n+1}\nabla\phi^{n+1},\mathbf{u}^{n+1}\right)\right)\\
				&-\left(\frac{2\tau e_\rho^{n+1}}{\lambda}\left(\left(\mathbf{u}_h^n\cdot\nabla\phi_h^{n+1},\mu_h^{n+1}\right)\right)
					-\left(\mu_h^{n+1}\nabla\phi_h^{n+1},\tilde{\mathbf{u}}_h^{n+1}\right)\right)\\
				&-\frac{2\tau e_\rho^{n+1}}{\rho^{n+1}}\left(F'(\phi^n),R_1^{n+1}\right)+4\tau e_\rho^{n+1}R_4^{n+1}\\
				:=~&\sum_{i=4}^{8}K_i.
			\end{aligned}
		\end{equation}
		Now we need to estimate the error of the right-hand side terms of \eqref{eq_error_multiplying_4tau_e_rho}.
		From the definitions of $\rho(\phi^{n+1})$, $\rho_h^{n+1}$, \eqref{eq_sav_inequalities} and the lemma \ref{Lemma_H_neg_1norm},
		we can derive
		\begin{equation}
			\begin{aligned}
				\bigg|\frac{1}{\rho^{n+1}}-\frac{1}{\rho_h^{n+1}}\bigg|
					=\bigg|\frac{E_{1,h}^{n+1}-E_1^{n+1}}{\rho_h^{n+1}\rho^{n+1}\left(\rho_h^{n+1}+\rho^{n+1}\right)}\bigg|
					\leq C\left(\|e_\phi^{n+1}\|+\tau\right).
			\end{aligned}
		\end{equation}
	Due to the fact that $\rho_h^{n+1}=\sqrt{E_{1,h}^{n+1}}>\sqrt{C_0}$ and $F'(\phi_h^n)$ has an uniform upper bound, it follows that
	\begin{equation}
		\begin{aligned}
			\big|K_4\big|=~&\bigg|2\tau e_\rho^{n+1}\left(\frac{F'(\phi^n)}{\rho^{n+1}}-\frac{F'(\phi_h^n)}{\rho_h^{n+1}},\delta_\tau\phi_h^{n+1}\right)\bigg|\\
			\leq ~&2\tau|e_\rho^{n+1}|\bigg|\left(\frac{1}{\rho^{n+1}}-\frac{1}{\rho_h^{n+1}}\right)\left(F'(\phi^{n+1}),\delta_\tau\phi^{n+1}\right)\\
				&+\frac{1}{\rho_h^{n+1}}\left(F'(\phi^n)-F'(\phi_h^n),\delta_\tau\phi^{n+1}\right)
					\bigg|\\
			\leq~&C\tau\left(|e_\rho^{n+1}|^2+\|e_\phi^{n+1}\|^2+\|e_\phi^n\|^2+\tau^2+h^{2(r+1)}\right),
		\end{aligned}
	\end{equation}
	and
	\begin{equation}
		\begin{aligned}
			\big|K_5\big|=~&\bigg|2\tau e_\rho^{n+1}\left(\frac{F'(\phi_h^n)}{\rho_h^{n+1}},\delta_\tau e_\phi^{n+1}\right)\bigg|
				\leq C\tau|e_\rho^{n+1}|\|\nabla F'(\phi_h^n)\|\|\delta_\tau e_{\phi}^{n+1}\|_{H^{-1}}\\
				\leq~&C\tau|e_\rho^{n+1}|\|\nabla\left((\phi_h^n)^3-\phi_h^n\right)\|\|\delta_\tau e_{\phi}^{n+1}\|_{H^{-1}}\\
				\leq~&C\tau\left(\|\nabla e_{\mu}^{n+1}\|^2+\|\nabla e_\phi^{n+1}\|^2
							+\|\nabla e_\mathbf{u}^{n+1}\|^2+\|e_\mathbf{u}^{n}\|^2+\|e_\phi^n\|_{H^1}^2
							+|\rho^{n+1}|^2+|e_\rho^n|^2+\tau^2+h^{2(r+1)}\right).
		\end{aligned}
	\end{equation}
	For the term $K_6$, we have the following  error estimates
	\begin{equation}
		\begin{aligned}
			\big|K_6\big|=~&\bigg|2\tau e_\rho^{n+1}\left(\frac{1}{\lambda \sqrt{E_{1}(\phi^{n+1})}}
					\left(\mathbf{u}^{n}\cdot\nabla\mathbf{u}^{n},\mathbf{u}^{n}\right)-
					\frac{\rho_h^{n}}{\lambda\rho_h^{n+1} \sqrt{E_{1,h}^{n+1}}}
					\left(\mathbf{u}_h^n\cdot\nabla\mathbf{u}_h^{n},\tilde{\mathbf{u}}_h^{n+1}\right)\right)\bigg|\\
			\leq~&\frac{2\tau|e_\rho^{n+1}|}{\lambda\sqrt{E_1^{n+1}}}\bigg|
					\left(\left(\mathbf{u}^{n}\cdot\nabla\mathbf{u}^{n},\mathbf{u}^{n}\right)
						-\frac{\rho_h^n\sqrt{E_1^{n+1}}}{\rho_h^{n+1}\sqrt{E_{1,h}^{n+1}}}
						\left(\mathbf{u}_h^n\cdot\mathbf{u}_h^n,\tilde{\mathbf{u}}_h^{n+1}\right)\right)
					\bigg|\\
			\leq ~&\frac{2\tau|e_\rho^{n+1}|}{\sqrt{E_1^{n+1}}}\bigg|
						\left(\left(\mathbf{u}^n-\mathbf{P}_h\mathbf{u}^n\right)\cdot\nabla\mathbf{u}^n,\mathbf{u}^n\right)
						+\left(e_\mathbf{u}^n\cdot\nabla\mathbf{u}^n,\mathbf{u}^n\right)\\
						&\qquad\qquad+\left(\mathbf{u}_h^{n}\cdot\nabla\left(\mathbf{u}^n-\mathbf{P}_h\mathbf{u}^n\right),\mathbf{u}^n\right)
						+\left(\mathbf{u}_h^n\cdot\nabla e_{\mathbf{u}}^n,\mathbf{u}^n\right)\\
						&\qquad\qquad+\left(\mathbf{u}_h^n\cdot\mathbf{u}_h^n,\mathbf{u}^n-\mathbf{u}^{n+1}\right)
						+\left(\mathbf{u}_h^n\cdot\nabla\mathbf{u}_h^n,e_{\tilde{\mathbf{u}}}^{n+1}\right)\\
						&+\frac{2\tau|e_\rho^{n+2}|}{\rho_h^{n+1}}\left(\frac{\rho_h^{n+1}}{\sqrt{E_1^{n+1}}}-\frac{\rho_h^n}{\sqrt{E_{1,h}^{n+1}}}\right)
						\left(\mathbf{u}_h^n\cdot\nabla\mathbf{u}_h^n,\tilde{\mathbf{u}}_h^{n+1}\right)
					\bigg|.
		\end{aligned}
	\end{equation}
	By \eqref{eq_sav_inequalities}, we can derive
	\begin{equation}
		\begin{aligned}
			\bigg|\frac{\rho_h^{n+1}}{\sqrt{E_1^{n+1}}}-\frac{\rho_h^n}{\sqrt{E_{1,h}^{n+1}}}\bigg|=~&
			\bigg|\frac{\rho_h^{n+1}}{\sqrt{E_1^{n+1}}}-\frac{\rho_h^{n}}{\sqrt{E_1^{n+1}}}
					+\frac{\rho_h^{n}}{\sqrt{E_1^{n+1}}}-\frac{\rho_h^n}{\sqrt{E_{1,h}^{n+1}}}\bigg|\\
			\leq~&\bigg|\frac{1}{\sqrt{E_1^{n+1}}}\left(\rho_h^{n+1}-\rho^{n+1}+\rho^{n+1}-\rho^n+\rho^n-\rho_h^{n}\right)\\
					&+\rho_h^n\left(\frac{1}{\sqrt{E_1^{n+1}}}-\frac{1}{\sqrt{E_{1,h}^{n+1}}}\right)	
						\bigg|\\
			\leq~&C\left(|e_\rho^{n+1}|+|e_\rho^n|+\|e_\phi^{n+1}\|+\tau\right).
		\end{aligned}
	\end{equation}
	It can easily be shown that
	\begin{equation}
		\begin{aligned}
			|K_6|\leq~&\frac{C\tau|e_\rho^{n+1}|}{\sqrt{C_0}}\left(\|e_\mathbf{u}^n\|\|\mathbf{u}^n
				-\mathbf{P}_h\mathbf{u}^n\|\|\mathbf{u}^n\|_{L^\infty}\right.\\
				&\left.+\|\mathbf{u}_h^n\|\|\nabla\mathbf{u}_h^n\|\left(\tau+\|\nabla e_{\tilde{\mathbf{u}}}^{n+1}\|
				+|e_\rho^{n+1}|+|e_\rho^n|+\|e_\phi^{n+1}\|\right)
			\right)\\
			\leq~&C\tau\left(\|\nabla e_{\tilde{\mathbf{u}}}^{n+1}\|^2+\tau\|e_\mathbf{u}^n\|^2
					+\tau\|e_\phi^n\|^2+\|e_\phi^{n+1}\|^2+|e_\rho^n|^2+|e_\rho^{n+1}|^2
					+h^{2(r+1)}+\tau^2\right).
		\end{aligned}
	\end{equation}
	The term $K_7$ can be estimated by
	\begin{equation}
		\label{eq_inequalities_K7}
		\begin{aligned}
			|K_7|=~&\bigg|\left(\frac{2\tau e_\rho^{n+1}}{\lambda}\left(\left(\mathbf{u}^n\cdot\nabla\phi^{n+1},\mu^{n+1}\right)\right)
			-\left(\mu^{n+1}\nabla\phi^{n+1},\mathbf{u}^{n+1}\right)\right)\\
			&-\left(\frac{2\tau e_\rho^{n+1}}{\lambda}\left(\left(\mathbf{u}_h^n\cdot\nabla\phi_h^{n+1},\mu_h^{n+1}\right)\right)
			-\left(\mu_h^{n+1}\nabla\phi_h^{n+1},\tilde{\mathbf{u}}_h^{n+1}\right)\right)\bigg|\\
			\leq~&C\tau|e_\rho^{n+1}|\left(\phi_h^{n+1},\left(\tilde{\mathbf{u}}_h^{n+1}-\mathbf{u}_h^n\right)\cdot\nabla\mu_h^{n+1}\right)\\
			\leq~&C\tau|e_\rho^{n+1}|\|\phi_h^{n+1}\|\|\tilde{\mathbf{u}}_h^{n+1}-\mathbf{u}_h^n\|_{L^4}\|\nabla\mu_h^{n+1}\|_{L^4}\\
			\leq~&C\tau\left(\|e_\mathbf{u}^{n+1}\|^2+\|e_{\mathbf{u}}^n\|^2
					+\|e_\phi^{n+1}\|^2+\|e_\phi^{n}\|^2+\|\nabla e_\mu^{n+1}\|^2+|e_\rho^{n+1}|^2+h^{2(r+1)}+\tau\right).
		\end{aligned}
	\end{equation}
	For the term $K_8$, using the Young inequality and the Cauchy-Schwarz inequality, we can obtain
	\begin{equation}
		\label{eq_inquality_K8}
		\begin{aligned}
			|K_8|=\bigg|-\frac{2\tau e_\rho^{n+1}}{\rho^{n+1}}\left(F'(\phi^n),R_1^{n+1}\right)+4\tau e_\rho^{n+1}R_4^{n+1}\bigg|
				\leq C\tau|e_\rho^{n+1}|^2+C\tau^3.
		\end{aligned}
	\end{equation}
	Summing from $n=0$ to $m$, we have
	\begin{equation}
		\label{eq_the_above_inequalities_summing_up}
		\begin{aligned}
			|e_\rho^{m+1}|^2\leq &~|e_\rho^0|^2+C(\tau^2+h^{2(r+1)})\\
			&+C\tau\sum_{n=0}^{m}\left(\|e_\phi^{n+1}\|^2
			+\|e_\phi^{n}\|^2+\|\nabla e_\phi^{n+1}\|^2+\|\nabla e_\mu^{n+1}\|^2
				+\|e_\mathbf{u}^{n+1}\|^2+\|e_\mathbf{u}^{n}\|^2\right).
		\end{aligned}
	\end{equation}
	\end{proof}
	Using the above estimates and discrete Gr\"onwall's inequality in Lemma \ref{lemma_discrete_Gronwall_inequation}, there exitsts a positive constant $\tau_2$ such that, when $\tau\leq \tau_2$, 
	\begin{equation}
		\|\nabla e_\phi^{m+1}\|^2+\|e_\mathbf{u}^{m+1}\|^2+|e_\rho^{m+1}|^2+\tau\sum_{n=0}^{m}\left(\|\nabla e_\mu^{n+1}\|^2+\|\nabla e_\mathbf{u}^{n+1}\|^2\right)\leq C\left(\mathcal{E}_h^2+\tau^2\right),
	\end{equation}
	where $C$ is positive constant independent of $\tau$ and $h$.
	\subsection{Estimates for $e_p^{n+1}$}
	We analyze the pressure error estimates through  the inf-sup condition
	\begin{equation}
		\|p\|\leq \sup_{\mathbf{v} \in \mathbf{H}_0^1(\Omega)}
		\frac{\left(p,\nabla\cdot \mathbf{v}\right)}{\|\nabla\mathbf{v}\|}=\sup_{\mathbf{v} \in \mathbf{H}_0^1(\Omega)}	\frac{-\left(\nabla p, \mathbf{v}\right)}{\|\nabla\mathbf{v}\|}.
	\end{equation}
	Therefore, we need to estimate the term $\left(\nabla e_p^{n+1},\mathbf{v}_h\right)$ in the discrete case as the following theorem.
	\begin{Theorem}
		Assuming that the solution to the CHNS model \eqref{eq_chns_equations} satisfies the regularities \eqref{eq_varibles_satisfied_regularities},
	then for the fully disctere scheme \eqref{eq_fully_discrete_CHNS_scheme_phi}-\eqref{eq_fully_discrete_CHNS_scheme_tilde_u_add_term}, we can obtain
	\begin{equation}
		\tau\sum_{n=0}^{m}\|e_p^{n+1}\|^2\leq C(\tau^2+h^{2(r+1)}),\quad \forall 0\leq m\leq N-1,
	\end{equation}
	where $C$ is positive constant independent of $\tau$ and $h$.
	\end{Theorem}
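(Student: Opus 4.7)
The plan is to deploy the discrete Taylor--Hood inf-sup condition, which reduces the claim to estimating $\sup_{\mathbf{v}_h\in\mathbf{X}_h^{r+1}}|(\nabla e_p^{n+1},\mathbf{v}_h)|/\|\nabla\mathbf{v}_h\|$. First I would use \eqref{eq_error_equations_u_p_cor} to rewrite the difference quotient appearing in \eqref{eq_error_equations_ns} as
$$\frac{e_{\tilde{\mathbf{u}}}^{n+1}-e_{\mathbf{u}}^n}{\tau}=\delta_\tau e_{\mathbf{u}}^{n+1}+\nabla(e_p^{n+1}-e_p^n)-R_3^{n+1}.$$
Substituting this into \eqref{eq_error_equations_ns} consolidates the two pressure contributions into $(\nabla e_p^{n+1},\mathbf{v}_h)$ and yields the identity
$$(\nabla e_p^{n+1},\mathbf{v}_h)=-(\delta_\tau e_{\mathbf{u}}^{n+1},\mathbf{v}_h)-\nu(\nabla e_{\tilde{\mathbf{u}}}^{n+1},\nabla\mathbf{v}_h)+(R_2^{n+1}+R_3^{n+1},\mathbf{v}_h)-T_{\mathrm{conv}}(\mathbf{v}_h)-T_{b}(\mathbf{v}_h)-(\delta_\tau(\mathbf{u}^{n+1}-\mathbf{P}_h\mathbf{u}^{n+1}),\mathbf{v}_h),$$
where $T_{\mathrm{conv}}$ and $T_{b}$ lump the convective and trilinear-form differences already dissected in Subsection~4.2.

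Step two is to bound each term against $\|\nabla\mathbf{v}_h\|$, using $\|\mathbf{v}_h\|\le C\|\nabla\mathbf{v}_h\|$ from Poincaré. For the diffusion, convective, trilinear, quasi-projection consistency and truncation terms, the decompositions used for $T_{\mathrm{conv}}$, the splits $A_1,\ldots,A_4$ and $Tb_1,Tb_2,Tb_3$ from Subsection~4.2 apply verbatim; combined with Lemma~\ref{Lemma_0401}, Lemma~\ref{Lemma_Ritz_quasi_projection}, \eqref{eq_stokes_quasi_proojection_inequation0005}--\eqref{eq_stokes_quasi_proojection_inequation0006}, and the already-established bounds $\tau\sum\|\nabla e_{\tilde{\mathbf{u}}}^{n+1}\|^2\le C(\tau^2+\mathcal{E}_h^2)$ together with $\max\|e_{\mathbf{u}}^n\|^2+\max\|\nabla e_\phi^n\|^2+\max|e_\rho^n|^2\le C(\tau^2+\mathcal{E}_h^2)$ proved at the end of Subsection~4.3, each of these contributes at most $C(\tau^2+h^{2(r+1)})$ after the final sum $\tau\sum_n(\cdot)^2$.

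The main obstacle is the pairing $(\delta_\tau e_{\mathbf{u}}^{n+1},\mathbf{v}_h)$: the pointwise bound $\|\delta_\tau e_{\mathbf{u}}^{n+1}\|\|\mathbf{v}_h\|$ leads to $\tau\sum\|\delta_\tau e_{\mathbf{u}}^{n+1}\|^2\sim\tau^{-1}\sum\|e_{\mathbf{u}}^{n+1}-e_{\mathbf{u}}^n\|^2$, which does not attain the required order. I would bypass this by interpreting the pairing through the discrete $H^{-1}$ norm,
$|(\delta_\tau e_{\mathbf{u}}^{n+1},\mathbf{v}_h)|\le\|\delta_\tau e_{\mathbf{u}}^{n+1}\|_{H^{-1}}\|\nabla\mathbf{v}_h\|$,
and then bounding $\|\delta_\tau e_{\mathbf{u}}^{n+1}\|_{H^{-1}}$ from the same combined momentum identity restricted to the discretely divergence-free subspace (where the pressure term vanishes by \eqref{eq_error_equations_incompressible}). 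This produces a bound of $\|\delta_\tau e_{\mathbf{u}}^{n+1}\|_{H^{-1}}$ purely in terms of $\|\nabla e_{\tilde{\mathbf{u}}}^{n+1}\|$, $\|e_{\mathbf{u}}^n\|$, $\|\nabla e_\phi^{n+1}\|$, $\|e_\mu^{n+1}\|$, $|e_\rho^n|$ and the truncation errors, so that $\tau\sum\|\delta_\tau e_{\mathbf{u}}^{n+1}\|_{H^{-1}}^2\le C(\tau^2+h^{2(r+1)})$ by the Subsection~4.3 bounds.

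Finally, I would square the inf-sup inequality, multiply by $\tau$, sum over $n=0,\ldots,m$, absorb the resulting $\tau\sum\|e_p^{n+1}\|^2$-dependent term on the right-hand side via the discrete Gronwall lemma (Lemma~\ref{lemma_discrete_Gronwall_inequation}), and combine all the contributions listed above to conclude $\tau\sum_{n=0}^m\|e_p^{n+1}\|^2\le C(\tau^2+h^{2(r+1)})$ for $\tau$ sufficiently small and a constant $C$ independent of $\tau$ and $h$.
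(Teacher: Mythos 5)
Your overall framework --- reducing $\|e_p^{n+1}\|$ to $\sup_{\mathbf{v}_h}(\nabla e_p^{n+1},\mathbf{v}_h)/\|\nabla\mathbf{v}_h\|$ via the inf-sup condition, consolidating the momentum error equations \eqref{eq_error_equations_ns} and \eqref{eq_error_equations_u_p_cor} into a single identity for $(\nabla e_p^{n+1},\mathbf{v}_h)$, and reusing the $A_i$/$Tb_i$ decompositions --- is exactly the paper's final step (cf.\ \eqref{eq_combining_error_eqautions_and_projections_term} and \eqref{eq_combining_error_eqautions_and_projections_term_recast}). You have also correctly identified the crux: the term $(\delta_\tau e_{\mathbf{u}}^{n+1},\mathbf{v}_h)$ is the only one that does not follow from the bounds already available at the end of Subsection~4.3.

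However, your device for handling that term has a genuine gap. Testing the momentum identity only with discretely divergence-free $\mathbf{v}_h$ controls the action of $\delta_\tau e_{\mathbf{u}}^{n+1}$ \emph{on that subspace}, not the full dual norm $\|\delta_\tau e_{\mathbf{u}}^{n+1}\|_{H^{-1}}$ that you then invoke in the inf-sup step --- and the inf-sup supremum is effectively attained on test functions that are \emph{not} discretely divergence-free (for divergence-free $\mathbf{v}_h$ the numerator $(e_p^{n+1},\nabla\cdot\mathbf{v}_h)$ vanishes). To pass from the restricted bound to the full pairing you would need to write $(\delta_\tau e_{\mathbf{u}}^{n+1},\mathbf{v}_h)=(\delta_\tau e_{\mathbf{u}}^{n+1},Q_h\mathbf{v}_h)$ with $Q_h$ the $L^2$-orthogonal projection onto the discretely divergence-free subspace (legitimate, since $\delta_\tau e_{\mathbf{u}}^{n+1}$ is discretely divergence-free by \eqref{eq_error_equations_incompressible}) \emph{and} the $H^1$-stability $\|\nabla Q_h\mathbf{v}_h\|\le C\|\nabla\mathbf{v}_h\|$, which is a nontrivial property of the discrete Leray projection that is neither stated nor proved anywhere in the paper. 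Without it the argument is circular: testing with general $\mathbf{v}_h$ reintroduces $(\nabla e_p^{n+1},\mathbf{v}_h)$, the very quantity being estimated. This is precisely why the paper instead spends the bulk of its proof establishing the pointwise $L^2$ bound $\|\delta_\tau e_{\mathbf{u}}^{m+1}\|^2\le C(h^{2(r+1)}+\tau^2)$ by differencing the error equations in time: it first derives bounds for $\|\delta_\tau e_\phi^{n+1}\|$, $\|\delta_\tau e_\mu^{n+1}\|$ and $\|\delta_\tau\Delta e_\phi^{n+1}\|$ (the terms $F_1,\dots,F_9$ in \eqref{eq_error_equations_delta_dtau_mu_inner_product_delta_tau_e_mu_phi}), then performs a second energy estimate on the time-differenced momentum equation \eqref{eq_error_estimates_delta_t_e_u_rewrite}, and separately controls the initial increments $\|\delta_\tau e_\phi^1\|$, $\|\delta_\tau e_{\mathbf{u}}^1\|$ and $\tau^2\|\nabla\delta_\tau e_p^1\|^2$. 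None of this machinery is optional under the paper's toolkit; your proposal either needs to supply the discrete Leray $H^1$-stability as an additional lemma or to carry out the time-differenced energy estimates as the paper does.
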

	\begin{proof}
		We first present the error estimate of the term $|\delta_\tau e_\rho^{n+1}|$. 
		Multiplying both side of \eqref{eq_error_equations_additional_terms} by $\delta_\tau e_\rho^{n+1}$ using the Lemma \ref{Lemma_0401} 
		and \ref{Lemma_boundness_e_rho} yields
		\begin{equation}
			\label{eq_delta_t_r}
			\begin{aligned}
				\big|\delta_\tau e_\rho^{n+1}\big|^2 =&\bigg|\delta_\tau e_\rho^{n+1}
				\left(\frac{1}{2\rho^{n+1}}\left(F'(\phi^n),\delta_\tau\phi^{n+1}\right)
				-\frac{1}{2\rho_h^{n+1}}\left(F'(\phi_h^n),\delta_\tau\phi_h^{n+1}\right)\right)\\
				&+\delta_\tau e_\rho^{n+1}\left(\frac{1}{2\lambda\sqrt{E_1^{n+1}}}
					\left(\mathbf{u}^n\cdot\nabla\mathbf{u}^n,\mathbf{u}^n\right)
					-\frac{\rho_h^n}{2\lambda\rho_h^{n+1}\sqrt{E_{1,h}^{n+1}}}
					\left(\mathbf{u}_h^n\cdot\nabla\mathbf{u}_h^n,\tilde{\mathbf{u}}_h^{n+1}\right)\right)\\
				&+\frac{\delta_\tau e_\rho^{n+1}}{\lambda}\left(\left(\mathbf{u}_h^n\cdot\nabla\phi_h^{n+1},\mu_h^{n+1}\right)
					-\left(\mu_h^{n+1}\cdot\nabla\phi_h^{n+1},\tilde{\mathbf{u}}_h^{n+1}\right)\right)\\
				&+\frac{\delta_\tau e_\rho^{n+1}}{2\rho^{n+1}}\left(R_4^{n+1}-\left(F'(\phi^n),R_1^{n+1}\right)\right)
				\\
				\leq ~&\frac{1}{2}\bigg|\delta_\tau e_\rho^{n+1}\bigg|^2+C\left(\|e_\phi^{n+1}\|^2+\|e_\phi^n\|^2+\|\nabla e_\phi^{n+1}\|^2+\|\nabla e_\phi^n\|^2+\|\nabla e_\mu^{n+1}\|^2\right.\\
				&\left.+\|e_\mathbf{u}^{n+1}\|^2+\|e_\mathbf{u}^{n}\|^2\right)+C\tau^2.
			\end{aligned}
		\end{equation}
	Multiplying \eqref{eq_delta_t_r} by $2\tau$ and summing up for $n$ from $0$ to $m$, and recalling the Theorem \ref{theorem_error_estimates_u_phi_mu_r}, we can obtain
	\begin{equation}
		\label{eq_error_estimates_r_delta_tau}
		\begin{aligned}
			\tau \sum_{n=0}^{m}|\delta_\tau e_\rho^{n+1}|^2\leq& C\tau\sum_{n=0}^{m}\left(\|e_\phi^{n+1}\|^2+\|e_\phi^n\|^2+\|\nabla e_\phi^{n+1}\|^2+\|\nabla e_\phi^n\|^2+\|\nabla e_\mu^{n+1}\|^2\right)\\&
			+C\tau\sum_{n=0}^{m}\left(\|e_\mathbf{u}^{n+1}\|^2+\|e_\mathbf{u}^{n}\|^2\right)+C\tau^2\\
			\leq& C\tau^2.
		\end{aligned}
	\end{equation}
	Next we establish the error estimates of the term $\|\delta_\tau e_\phi^{n+1}\|$ and $\|\delta_\tau e_\mu^{n+1}\|$ and denote the
	operator 
	$\delta_{\tau\tau}g^{n+1}:=\frac{\delta_\tau g^{n+1}-\delta_\tau g^n}{\tau}$. Taking the $\delta_{\tau}$ of two consecutive steps in \eqref{eq_error_equations_phi} and \eqref{eq_error_equations_mu}, we get
	\begin{equation}
		\label{eq_error_equations_delta_dtau_phi}
		\begin{aligned}
			\left(\delta_{\tau\tau} e_\phi^{n+1},w_h\right)+M\left(\nabla\delta_\tau e_\mu^{n+1},\nabla w_h\right)+\delta_{\tau}b\left(R_h\phi^{n+1},\mathbf{u}^n,w_h\right)-\delta_{\tau}b\left( \phi_h^{n+1},\mathbf{u}_h^n,w_h\right)&\\
			-\left(\delta_{\tau\tau}\left(R_h\phi^{n+1}-\phi^{n+1}\right),w_h\right)
			-\left(\delta_\tau R_1^{n+1},w_h\right)&=0
		\end{aligned}
	\end{equation}
	 and
	 \begin{equation}
	 	\label{eq_error_equations_delta_dtau_mu}
	 	\begin{aligned}
	 		-\left(\delta_\tau e_\mu^{n+1},\varphi_h\right)+\lambda\left(\nabla \delta_\tau e_\phi^{n+1},\nabla\varphi_h\right)
	 		+\lambda\left(\nabla\left(\delta_\tau \phi^{n+1}-\delta_\tau R_h\phi^{n+1}\right),\nabla\varphi_h\right)&\\
	 		-\left(\delta_\tau \mu^{n+1}-\delta_\tau \Pi_h\mu^{n+1},\varphi_h\right)
	 		-\frac{\lambda}{\epsilon^2}\left(\delta_\tau (\phi_h^{n})^3-\delta_\tau (\phi^{n})^3+\delta_\tau e_\phi^{n}+\delta_\tau \phi^n-\delta_\tau R_h\phi^n,\varphi_h\right)&=0.
	 	\end{aligned}
	 \end{equation}
	Taking the inner product of \eqref{eq_error_equations_delta_dtau_phi} with $\lambda\delta_\tau e_\phi^{n+1}$ and using the define \eqref{eq_trilinear_forms} leads to
	\begin{equation}
		\label{eq_error_equations_delta_dtau_phi_inner_product_delta_tau_e_phi}
		\begin{aligned}
			\lambda\frac{\|\delta_\tau e_\phi^{n+1}\|^2-\|\delta_\tau e_\phi^n\|^2+\|\delta_\tau e_\phi^{n+1}-\delta_\tau e_\phi^n\|^2}{2\tau}+
			M\lambda\left(\nabla\delta_\tau e_\mu^{n+1},\nabla \delta_\tau e_\phi^{n+1}\right)&
			\\+ \lambda\left(\delta_\tau  \nabla R_h\phi^{n+1}\cdot\mathbf{u}^n,\delta_\tau e_\phi^{n+1}\right)-\lambda\left(\delta_\tau \nabla\phi_h^{n+1}\cdot\mathbf{u}_h^n,\delta_\tau e_\phi^{n+1}\right)&\\
			-\lambda\left(\delta_{\tau\tau}\left(R_h\phi^{n+1}-\phi^{n+1}\right),\delta_\tau e_\phi^{n+1}\right)
			-\lambda\left(\delta_\tau R_1^{n+1},\delta_\tau e_\phi^{n+1}\right)&=0
		\end{aligned}
	\end{equation}
    and taking the inner product of \eqref{eq_error_equations_delta_dtau_mu} with $\frac{M}{2}\delta_\tau e_\mu^{n+1}$ and
    $\frac{M\lambda}{2}\delta_\tau\Delta e_\phi^{n+1}$, respectively, we can obtain
    \begin{equation}
    	\label{eq_error_equations_delta_dtau_mu_inner_product_delta_tau_e_phi}
    	\begin{aligned}
    		-\frac{M}{2}\|\delta_\tau e_\mu^{n+1}\|^2+\frac{M\lambda}{2}\left(\nabla \delta_\tau e_\phi^{n+1},\nabla\delta_\tau e_\mu^{n+1}\right)
    		+\frac{M\lambda}{2}\left(\nabla\left(\delta_\tau \phi^{n+1}-\delta_\tau R_h\phi^{n+1}\right),\nabla\delta_\tau e_\mu^{n+1}\right)&\\
    		-\frac{M}{2}\left(\delta_\tau \mu^{n+1}-\delta_\tau \Pi_h\mu^{n+1},\delta_\tau e_\mu^{n+1}\right)&\\
    		-\frac{M\lambda}{2\epsilon^2}\left(\delta_\tau (\phi_h^{n})^3-\delta_\tau (\phi^{n})^3+\delta_\tau e_\phi^{n}+\delta_\tau \phi^n-\delta_\tau R_h\phi^n,\delta_\tau e_\mu^{n+1}\right)&=0,
    	\end{aligned}
    \end{equation}
	and 
	\begin{equation}
		\label{eq_error_equations_delta_dtau_mu_inner_product_delta_tau_e_mu}
		\begin{aligned}
			\frac{M\lambda}{2}\left(\nabla\delta_\tau e_\mu^{n+1},\nabla\delta_\tau e_\phi^{n+1}\right)-\frac{M\lambda^2}{2}\|\delta_\tau\Delta e_\phi^{n+1}\|^2
			-\frac{M\lambda^2}{2}\left(\Delta\left(\delta_\tau \phi^{n+1}-\delta_\tau R_h\phi^{n+1}\right),\delta_\tau\Delta e_\phi^{n+1}\right)&\\
			-\frac{M\lambda}{2}\left(\delta_\tau \mu^{n+1}-\delta_\tau \Pi_h\mu^{n+1},\delta_\tau\Delta e_\phi^{n+1}\right)&\\
			-\frac{M\lambda}{2\epsilon^2}\left(\delta_\tau (\phi_h^{n})^3-\delta_\tau (\phi^{n})^3+\delta_\tau e_\phi^{n}+\delta_\tau \phi^n-\delta_\tau R_h\phi^n,\delta_\tau\Delta e_\phi^{n+1}\right)&=0.
		\end{aligned}
	\end{equation}
	Combining \eqref{eq_error_equations_delta_dtau_phi_inner_product_delta_tau_e_phi}, \eqref{eq_error_equations_delta_dtau_mu_inner_product_delta_tau_e_phi} with \eqref{eq_error_equations_delta_dtau_mu_inner_product_delta_tau_e_mu}, we obtain
	\begin{equation}
		\label{eq_error_equations_delta_dtau_mu_inner_product_delta_tau_e_mu_phi}
		\begin{aligned}
			\frac{\lambda}{2\tau}\left(\|\delta_\tau e_\phi^{n+1}\|^2-\|\delta_\tau e_\phi^n\|^2+\|\delta_\tau e_\phi^{n+1}-\delta_\tau e_\phi^n\|^2\right)+\frac{M}{2}\|\delta_\tau e_\mu^{n+1}\|^2+\frac{M\lambda^2}{2}\|\delta_\tau\Delta e_\phi^{n+1}\|^2&=\\
			- \lambda\left(\delta_{\tau}\nabla R_h\phi^{n+1}\cdot\mathbf{u}^n,\delta_\tau e_\phi^{n+1}\right)+\lambda\left(\delta_\tau\nabla \phi_h^{n+1}\cdot\mathbf{u}_h^n,\delta_\tau e_\phi^{n+1}\right)&\\
			+\lambda\left(\delta_{\tau\tau}\left(R_h\phi^{n+1}-\phi^{n+1}\right),\delta_\tau e_\phi^{n+1}\right)
			+\lambda\left(\delta_\tau R_1^{n+1},\delta_\tau e_\phi^{n+1}\right)&\\
			+\frac{M\lambda}{2}\left(\nabla\left(\delta_\tau \phi^{n+1}
			-\delta_\tau R_h\phi^{n+1}\right),\nabla\delta_\tau e_\mu^{n+1}\right)
			-\frac{M}{2}\left(\delta_\tau \mu^{n+1}-\delta_\tau \Pi_h\mu^{n+1},\delta_\tau e_\mu^{n+1}\right)&\\
			-\frac{M\lambda}{2\epsilon^2}\left(\delta_\tau (\phi_h^{n})^3-\delta_\tau (\phi^{n})^3+\delta_\tau e_\phi^{n}
			+\delta_\tau \phi^n-\delta_\tau R_h\phi^n,\delta_\tau e_\mu^{n+1}\right)&\\
			-\frac{M\lambda^2}{2}\left(\Delta\left(\delta_\tau \phi^{n+1}-\delta_\tau R_h\phi^{n+1}\right),\delta_\tau\Delta e_\phi^{n+1}\right)
			-\frac{M\lambda}{2}\left(\delta_\tau \mu^{n+1}-\delta_\tau \Pi_h\mu^{n+1},\delta_\tau\Delta e_\phi^{n+1}\right)&\\
			-\frac{M\lambda^2}{2\epsilon^2}\left(\delta_\tau (\phi_h^{n})^3-\delta_\tau (\phi^{n})^3+\delta_\tau e_\phi^{n}+\delta_\tau \phi^n-\delta_\tau R_h\phi^n,\delta_\tau\Delta e_\phi^{n+1}\right)
			&:=\sum_{i=1}^{9}F_i.
		\end{aligned}
	\end{equation}
	Then we establish the error estimates in each terms of the above equations \eqref{eq_error_equations_delta_dtau_mu_inner_product_delta_tau_e_mu_phi}. 
	 For the term of $\lambda\left(\delta_\tau\nabla \phi_h^{n+1}\cdot\mathbf{u}_h^n,\delta_\tau e_\phi^{n+1}\right)- \lambda\left(\delta_{\tau}\nabla R_h\phi^{n+1}\cdot\mathbf{u}^n,\delta_\tau e_\phi^{n+1}\right)$, using the equation \eqref{eq_stokes_quasi_proojection_equation0002}, we can obtain
	\begin{equation}
		\label{eq_error_equations_delta_dtau_mu_inner_product_delta_tau_e_mu_phi_drive_b_F1}
		\begin{aligned}
			\big|F_1\big|=&\big|\lambda\left(\left(\delta_\tau\nabla\phi_h^{n+1}\cdot\mathbf{u}_h^n,\delta_\tau e_\phi^{n+1}\right)
				-\left(\delta_\tau\nabla R_h\phi^{n+1}\cdot\mathbf{u}^n,\delta_\tau e_\phi^{n+1}\right)\right)\big|\\
			=&\big|-\lambda\left(\delta_\tau \nabla e_\phi^{n+1}\cdot\mathbf{u}_h^n,\delta_\tau e_\phi^{n+1}\right)
				-\lambda\left(\delta_\tau \nabla R_h\phi^{n+1}\cdot(\mathbf{u}^n-\mathbf{P}_h\mathbf{u}^n),\delta_\tau e_\phi^{n+1}\right)\\
				&-\lambda\left(\delta_\tau \nabla R_h\phi^{n+1}\cdot e_{\mathbf{u}}^n,\delta_\tau e_\phi^{n+1}\right)\big|\\
			\leq&~C\|\delta_\tau\nabla e_\phi^{n+1}\|\|\mathbf{u}_h^n\|_{L^4}\|\delta_\tau e_\phi^{n+1}\|_{L^4}+C\|\delta_\tau \nabla R_h\phi^{n+1}\|\| \mathbf{u}^n-\mathbf{P}_h\mathbf{u}^n\|_{L^4}\|\delta_\tau e_\phi^{n+1}\|_{L^4}\\
				&+C\|\delta_\tau \nabla R_h\phi^{n+1}\|\| e_{\mathbf{u}}^n\|_{L^4}\|\delta_\tau e_\phi^{n+1}\|_{L^4}\\
			\leq &~C\left(\frac{1}{3}\|\delta_\tau \nabla e_\phi^{n+1}\|^2+\|\nabla e_{\mathbf{u}}^n\|^2+\tau^2+h^{2(r+1)}\right).
		\end{aligned}
	\end{equation}
	By \eqref{eq_error_equations_delta_dtau_mu_inner_product_delta_tau_e_mu_phi}, we next estimate the error for each term in turn, as follows, and denote
	\begin{equation}
		\begin{aligned}
			\big|F_2\big|=&\big|\lambda\left(\delta_{\tau\tau}\left(R_h\phi^{n+1}-\phi^{n+1}\right),\delta_\tau e_\phi^{n+1}\right)\big|\\
			=&\frac{\lambda}{\tau}\left(\delta_\tau\left(R_h\phi^{n+1}-\phi^{n+1}\right)-\delta_\tau\left(R_h\phi^n-\phi^n\right),\delta_\tau e_\phi^{n+1}\right)\\
			\leq &~C\|\delta_\tau\left(R_h\phi^{n+1}-\phi^{n+1}\right\|\|\delta_\tau e_\phi^{n+1}\|_{L^4}+C\|\delta_\tau\left(R_h\phi^n-\phi^n\right)\|\|\delta_\tau e_\phi^{n+1}\|_{L^4}\\
			\leq&~C\left(\frac{1}{3}\|\delta_\tau\nabla e_\phi^{n+1}\|^2+\mathcal{E}_h^2+\tau^2\right)
		\end{aligned}
	\end{equation}	
	and using the Lemma \ref{Lemma_0401}, we have
	\begin{equation}
	\begin{aligned}
		\big|F_3\big|=&\big|\lambda\left(\delta_\tau R_1^{n+1},\delta_\tau e_\phi^{n+1}\right)\big|
		\leq C\|\delta_\tau R_1^{n+1}\|\|\delta_\tau e_\phi^{n+1}\|_{L^4}
		\leq C\left(\frac{1}{3}\|\delta_\tau \nabla e_\phi^{n+1}\|+\tau^2\right).
	\end{aligned}
	\end{equation}
	The next term of \eqref{eq_error_equations_delta_dtau_mu_inner_product_delta_tau_e_mu_phi} is estimated as follows,
	\begin{equation}
		\begin{aligned}
			\big|F_4\big|=\big|\frac{M\lambda}{2}\left(\nabla\left(\delta_\tau \phi^{n+1}-\delta_\tau R_h\phi^{n+1}\right),\nabla\delta_\tau e_\mu^{n+1}\right)\big|=0
		\end{aligned}
	\end{equation}
	and using the Lemma \ref{Lemma_Ritz_quasi_projection}, we have 
	\begin{equation}
		\begin{aligned}
			\big|F_5\big|=&\big|-\frac{M}{2}\left(\delta_\tau \mu^{n+1}-\delta_\tau \Pi_h\mu^{n+1},\delta_\tau e_\mu^{n+1}\right)\big|
			\leq C\|\delta_\tau \left(\mu^{n+1}- \Pi_h\mu^{n+1}\right)\|_{H^{-1}}\|\delta_\tau \nabla e_\mu^{n+1}\|\\
			\leq&~C\left(h^{2(r+1)}+\frac{1}{2}\|\delta_\tau\nabla e_\mu^{n+1}\|^2\right).
		\end{aligned}
	\end{equation}
	 We next estimate that
	\begin{equation}
		\begin{aligned}
			\big|F_6\big|=&~\big|-\frac{M\lambda}{2\epsilon^2}\left(\delta_\tau (\phi_h^{n})^3-\delta_\tau (\phi^{n})^3+\delta_\tau e_\phi^{n}+\delta_\tau \phi^n-\delta_\tau R_h\phi^n,\delta_\tau e_\mu^{n+1}\right)\big|\\
			= &~\big|-\frac{M\lambda}{2\epsilon^2}\left(\left(\delta_\tau (\phi_h^{n})^3-\delta_\tau (\phi^{n})^3,\delta_\tau e_\mu^{n+1}\right)
				+\left(\delta_\tau e_\phi^{n},\delta_\tau e_\mu^{n+1}\right)+\left(\delta_\tau \phi^n-\delta_\tau R_h\phi^n,\delta_\tau e_\mu^{n+1}\right)\right)\big|\\
				\leq&~C\|\nabla\left(\delta_\tau (\phi_h^{n})^3-\delta_\tau (\phi^{n})^3\right)\|\|\delta_\tau e_\mu^{n+1}\|_{L^4}
				+C\|\delta_\tau\nabla e_\phi^{n}\|\|\delta_\tau e_\mu^{n+1}\|_{L^4}\\
				&+C\|\delta_\tau \left(\phi^n- R_h\phi^n\right)\|_{H^{-1}}\|\delta_\tau\nabla e_\mu^{n+1}\|\\
				\leq&~C\left(\|\delta_\tau\left(\phi^{n}-R_h\phi^{n}\right)\|_{H^{-1}}^2+\|\delta_\tau \left(\phi^n- R_h\phi^n\right)\|_{H^{-1}}^2+\|\delta_\tau\nabla e_\phi^{n}\|^2+\|\delta_\tau\nabla e_\mu^{n+1}\|^2\right)\\
				\leq&~C\left(h^{2(r+1)}+\frac{1}{2}\|\delta_\tau\nabla e_\phi^{n}\|^2+\frac{1}{2}\|\delta_\tau\nabla e_\mu^{n+1}\|^2\right)
		\end{aligned}
	\end{equation}
	and
	\begin{equation}
		\begin{aligned}
			\big|F_7\big|=&\big|-\frac{M\lambda^2}{2}\left(\Delta\left(\delta_\tau \phi^{n+1}-\delta_\tau R_h\phi^{n+1}\right),\delta_\tau\Delta e_\phi^{n+1}\right)\big|\\
			=&\big|\frac{M\lambda^2}{2}\delta_\tau\left(\nabla\left( \phi^{n+1}- R_h\phi^{n+1}\right),\nabla\delta_\tau\Delta e_\phi^{n+1}\right)\big|=0
		\end{aligned}
	\end{equation}
	and
	\begin{equation}
		\begin{aligned}
			\big|F_8\big|=&\big|-\frac{M\lambda}{2}\left(\delta_\tau \mu^{n+1}-\delta_\tau \Pi_h\mu^{n+1},\delta_\tau\Delta e_\phi^{n+1}\right)\big|\\
			\leq&~C\|\left(\delta_\tau \left(\mu^{n+1}- \Pi_h\mu^{n+1}\right)\right)\|_{L^4}\|\delta_\tau\Delta e_\phi^{n+1}\|\\
			\leq &~C\left(\|\nabla\left(\delta_\tau \left(\mu^{n+1}- \Pi_h\mu^{n+1}\right)\right)\|^2+\|\delta_\tau\Delta e_\phi^{n+1}\|^2\right)\\
			\leq&~C\left(h^{2(r+1)}+\|\delta_\tau\Delta e_\phi^{n+1}\|^2\right)
		\end{aligned}
	\end{equation}
	and
	\begin{equation}
		\begin{aligned}
			\big|F_9\big|=&\big|-\frac{M\lambda^2}{2\epsilon^2}\left(\delta_\tau (\phi_h^{n})^3-\delta_\tau (\phi^{n})^3+\delta_\tau e_\phi^{n}+\delta_\tau \phi^n-\delta_\tau R_h\phi^n,\delta_\tau\Delta e_\phi^{n+1}\right)\big|\\
			= &\big|-\frac{M\lambda^2}{2\epsilon^2}\left(\delta_\tau (\phi_h^{n})^3-\delta_\tau (\phi^{n})^3,\delta_\tau\Delta e_\mu^{n+1}\right)\\
				&-\frac{M\lambda^2}{2\epsilon^2}\left(\delta_\tau e_\phi^{n},\delta_\tau \Delta e_\mu^{n+1}\right)-\frac{M\lambda^2}{2\epsilon^2}\left(\delta_\tau \phi^n-\delta_\tau R_h\phi^n,\delta_\tau\Delta e_\mu^{n+1}\right)\big|\\
			=&\big|\frac{M\lambda^2}{2\epsilon^2}\left(\nabla\left(\delta_\tau (\phi_h^{n})^3-\delta_\tau (\phi^{n})^3\right),\delta_\tau\nabla e_\mu^{n+1}\right)\\
				&+\frac{M\lambda^2}{2\epsilon^2}\left(\delta_\tau\nabla e_\phi^{n},\delta_\tau \nabla e_\mu^{n+1}\right)-\frac{M\lambda^2}{2\epsilon^2}\left(\delta_\tau\left(\nabla\left( \phi^n- R_h\phi^n\right)\right),\delta_\tau\nabla e_\mu^{n+1}\right)\big|\\
			\leq &~C\|\nabla\left(\delta_\tau (\phi_h^{n})^3-\delta_\tau (\phi^{n})^3\right)\|_{H^{-1}}\|\delta_\tau\Delta e_\mu^{n+1}\|+C\|\nabla\delta_\tau e_\phi^{n}\|_{H^{-1}}\|\delta_\tau\Delta e_\mu^{n+1}\|\\
			\leq &~C\left(\|\delta_\tau\Delta e_\mu^{n+1}\|^2+\|(-\Delta_h)^{-\frac{1}{2}}\delta_\tau \left(\phi^{n}-\phi_h^{n}\right)\|^2+\|\delta_\tau\nabla e_\phi^{n}\|^2+\|\delta_\tau\nabla e_\mathbf{u}^{n}\|^2+\tau^2+h^{2(r+1)}\right)\\
			\leq&~C\left(\|\delta_\tau\Delta e_\mu^{n+1}\|^2+\|\nabla e_\mu^{n+1}\|^2+\|\nabla e_\phi^{n}\|^2+\|\nabla e_\mathbf{u}^{n+1}\|^2\right)\\
			&+C\left(\frac{1}{2}\|\delta_\tau\nabla e_\phi^{n}\|^2+\|\delta_\tau\nabla e_\mathbf{u}^{n}\|^2+\tau^2+h^{2(r+1)}\right).
		\end{aligned}
	\end{equation}
	Thus, combining \eqref{eq_error_equations_delta_dtau_mu_inner_product_delta_tau_e_mu_phi} with $F_1,\cdots,F_9$, we can obtain
		\begin{equation}
		\label{eq_error_equations_delta_dtau_mu_inner_product_delta_tau_e_mu_phi_finally_results}
		\begin{aligned}
			&\frac{\lambda}{2\tau}\left(\|\delta_\tau e_\phi^{n+1}\|^2-\|\delta_\tau e_\phi^n\|^2
				+\|\delta_\tau e_\phi^{n+1}-\delta_\tau e_\phi^n\|^2\right)+\frac{M}{2}\|\delta_\tau e_\mu^{n+1}\|^2
				+\frac{M\lambda^2}{2}\|\delta_\tau\Delta e_\phi^{n+1}\|^2\\
				\leq&~C\left(\|\delta_\tau\Delta e_\mu^{n+1}\|^2+\|\delta_\tau\nabla e_\mu^{n+1}\|^2+\|\nabla e_\mu^{n+1}\|^2+\|\delta_\tau\Delta e_\phi^{n+1}\|^2+\|\delta_\tau\nabla e_\phi^{n+1}\|^2+\|\nabla e_\phi^{n}\|^2\right)\\&
					+C\left(\|\delta_\tau\nabla e_\phi^{n}\|^2+\|\delta_\tau e_\phi^n\|^2+\|\delta_\tau\nabla e_\mathbf{u}^{n}\|^2+\|\nabla e_\mathbf{u}^{n+1}\|^2+\|\nabla e_\mathbf{u}^{n}\|^2+h^{2(r+1)}+\tau^2\right).
		\end{aligned}
	\end{equation}
	Mutliplying \eqref{eq_error_equations_delta_dtau_mu_inner_product_delta_tau_e_mu_phi_finally_results} by $2\tau$ and  summing up for $n$ from $1$ to $m$, 
	then using the Lemma \ref{lemma_discrete_Gronwall_inequation} and Theorem \ref{theorem_error_estimates_u_phi_mu_r}, we have 
	\begin{equation}
		\label{eq_error_equations_summing_up_1m}
		\begin{aligned}
			&\|\delta_\tau e_\phi^{m+1}\|^2
			+\sum_{n=1}^{m}\|\delta_\tau e_\phi^{n+1}-\delta_\tau e_\phi^n\|^2+\tau\sum_{n=1}^{m}\|\delta_\tau e_\mu^{n+1}\|^2
			+\tau\sum_{n=0}^{m}\|\delta_\tau\Delta e_\phi^{n+1}\|^2\\
			\leq&\|\delta_\tau e_\phi^1\|^2+\tau\|\delta_\tau\nabla e_\phi^1\|^2+C\tau\sum_{n=1}^{m}\|\delta_\tau e_\mathbf{u}^n\|^2+C\left(h^{2(r+1)}+\tau^2\right),
		\end{aligned}
	\end{equation} 
	where $m=1,2,\cdots,N-1$.
	Thus, we need to estimate $\|\delta_\tau e_\phi^1\|^2$ and $\|\delta_\tau\nabla e_\phi^1\|^2$. Taking the inner product of \eqref{eq_error_equations_phi} with $\delta_\tau e_\phi^1$ when $n=0$ leads to
	\begin{equation}
		\label{eq_error_equations_phi_n0}
		\left(\delta_\tau e_\phi^{1},\delta_\tau e_\phi^1\right)+M\left(\nabla e_\mu^{1},\nabla\delta_\tau e_\phi^1\right)-\left(\delta_\tau\left(R_h\phi^{1}-\phi^{1}\right),\delta_\tau e_\phi^1\right)
		-\left(R_1^{1},\delta_\tau e_\phi^1\right)=0,
	\end{equation}
	we derive that
	\begin{equation}
		\begin{aligned}
			\|\delta_\tau e_\phi^1\|^2=&-M\left(\nabla e_\mu^{1},\nabla\delta_\tau e_\phi^1\right)+\left(\delta_\tau\left(R_h\phi^{1}-\phi^{1}\right),\delta_\tau e_\phi^1\right)
				+\left(R_1^{1},\delta_\tau e_\phi^1\right)\\
			\leq &~\left(\nabla\left(\mu_h^1-\Pi_h\mu^1\right),\nabla\delta_\tau e_\phi^1\right)+\|\delta_\tau\left(R_h\phi^{1}-\phi^{1}\right)\|\|\delta_\tau e_\phi^1\|_{L^4}
				+\|R_1^1\|\|\delta_\tau e_\phi^1\|_{L^4}\\
			\leq &~\|\nabla\left(\mu_h^1-\Pi_h\mu^1\right)\|_{H^{-1}} \|\nabla\delta_\tau e_\phi^1\|+\|\delta_\tau\left(R_h\phi^{1}-\phi^{1}\right)\|\|\delta_\tau e_\phi^1\|_{L^4}
				+\|R_1^1\|\|\delta_\tau e_\phi^1\|_{L^4}\\
			\leq &~C\left(h^{2(r+1)}+\tau^2+\|\delta_\tau\nabla e_\phi^1\|^2\right).
		\end{aligned}
	\end{equation}
	Testing $\varphi_h=\delta_\tau e_\phi^1$ in \eqref{eq_error_equations_delta_dtau_mu} when $n=0$, we can obtain
	\begin{equation}
		\label{eq_delta_t_e_phi1}
		\begin{aligned}
			-\left(\delta_\tau e_\mu^{1},\delta_\tau e_\phi^1\right)+\lambda\left(\nabla \delta_\tau e_\phi^{1},\nabla\delta_\tau e_\phi^1\right)
			+\lambda\left(\nabla\left(\delta_\tau \phi^{1}-\delta_\tau R_h\phi^{1}\right),\nabla\delta_\tau e_\phi^1\right)&\\
			-\left(\delta_\tau \mu^{1}-\delta_\tau \Pi_h\mu^{1},\delta_\tau e_\phi^1\right)
			-\frac{\lambda}{\epsilon^2}\left(\delta_\tau (\phi_h^{0})^3-\delta_\tau (\phi^{0})^3,\delta_\tau e_\phi^1\right)&=0,
		\end{aligned}
	\end{equation}
	 and deduce that
	 \begin{equation}
	 	\label{eq_delta_t_nabal_e_phi1}
	 	\begin{aligned}
	 		\| \delta_\tau\nabla e_\phi^{1}\|^2=&\frac{1}{\lambda}\left(\left(\delta_\tau e_\mu^{1},\delta_\tau e_\phi^1\right)
	 			+\left(\delta_\tau \mu^{1}-\delta_\tau \Pi_h\mu^{1},\delta_\tau e_\phi^1\right)\right)
	 		+\frac{1}{\epsilon^2}\left(\delta_\tau (\phi_h^{0})^3-\delta_\tau (\phi^{0})^3,\delta_\tau e_\phi^1\right)\\
	 		&-\left(\nabla\left(\delta_\tau \phi^{1}-\delta_\tau R_h\phi^{1}\right),\nabla\delta_\tau e_\phi^1\right)\\
	 		\leq &~C\|\mu^1-\mu_h^1\|\|\delta_\tau e_\phi^1\|_{H^{-1}}
	 			+\|(-\Delta_h)^{-\frac{1}{2}}\delta_\tau \left(\phi^{0}-\phi_h^{0}\right)\|\|\delta_\tau e_\phi^1\|_{H^{-1}}\\
	 		\leq &~C\left(\|\mu^1-\mu_h^1\|^2+\|\delta_\tau e_\phi^1\|_{H^{-1}}^2+\|\delta_\tau \left(\phi^{0}-\phi_h^{0}\right)\|^2\right)\\
	 		\leq &~C\left(h^{2r}+\tau^2+\|\nabla e_\mu^1\|^2+\|\nabla e_\phi^1\|^2+\|\nabla e_\mathbf{u}^1\|^2\right).
	 	\end{aligned}
	 \end{equation}
 	Using the equations \eqref{eq_delta_t_e_phi1}, \eqref{eq_delta_t_nabal_e_phi1} and Theorem
 	\ref{theorem_error_estimates_u_phi_mu_r}, we have 
 	\begin{equation}
 		\label{eq_error_estimates_phi_mu_results_delta_tau}
 			\begin{aligned}
 			&\|\delta_\tau e_\phi^{m+1}\|^2
 			+\sum_{n=1}^{m}\|\delta_\tau e_\phi^{n+1}-\delta_\tau e_\phi^n\|^2+M\tau\sum_{n=1}^{m}\|\delta_\tau e_\mu^{n+1}\|^2
 			+M\lambda\tau\sum_{n=0}^{m}\|\delta_\tau\Delta e_\phi^{n+1}\|^2\\
 			\leq&\|\delta_\tau e_\phi^1\|^2+\tau\|\delta_\tau\nabla e_\phi^1\|^2+C\tau\sum_{n=1}^{m}\|\delta_\tau e_\mathbf{u}^n\|^2+C\left(h^{2r}+\tau^2\right)\\
 			\leq &C\tau\sum_{n=1}^{m}\|\delta_\tau e_\mathbf{u}^n\|^2+C\left(h^{2(r+1)}+\tau^2\right),
 		\end{aligned}
 	\end{equation}
 	where $m=1,2,\cdots,N-1$.
	\end{proof}
	Next, we establish an estimate for $\|\delta_\tau e_{\mathbf{u}}^{n+1}\|$. Combining \eqref{eq_error_equations_u_p_cor} with \eqref{eq_error_equations_ns} results in
	\begin{equation}
		\label{eq_combining_error_eqautions_and_projections_term}
		\begin{aligned}
			\left(\delta_\tau e_{{\mathbf{u}}}^{n+1},\mathbf{v}_h\right)+\nu\left(\nabla e_{\tilde{\mathbf{u}}}^{n+1},\nabla\mathbf{v}_h\right)
			+\left(\nabla e_p^{n+1},\mathbf{v}_h\right)\\
			+\left(\frac{\rho^{n+1}}{\sqrt{E_1^{n+1}}}\left(\mathbf{u}^{n+1}\cdot\nabla\mathbf{u}^{n+1},\mathbf{v}_h\right)
			-\frac{\rho_h^n}{\sqrt{E_{1,h}^{n+1}}}\left(\mathbf{u}_h^{n}\cdot\nabla\mathbf{u}_h^{n},\mathbf{v}_h\right)\right)
			+\left(\delta_\tau\left(\mathbf{u}^{n+1}-\mathbf{P}_h\mathbf{u}^{n+1}\right),\mathbf{v}_h\right)\\
			+\left(b\left(\phi_h^{n+1},\mu_h^{n+1},\mathbf{v}_h\right)-b\left(R_h\phi^{n+1},\mu^{n+1},\mathbf{v}_h\right)\right)-
			\left(R_2^{n+1},\mathbf{v}_h\right)-\left(R_3^{n+1},\mathbf{v}_h\right)&=0.\\
		\end{aligned}
	\end{equation}
	The result obtained by subtracting the two consecutive steps 
	in \eqref{eq_combining_error_eqautions_and_projections_term} is shown below that
	\begin{equation}
		\label{eq_combining_error_eqautions_and_projections_term_difference}
		\begin{aligned}
			\left(\delta_{\tau\tau} e_{{\mathbf{u}}}^{n+1},\mathbf{v}_h\right)+\nu\left(\nabla\delta_\tau e_{\tilde{\mathbf{u}}}^{n+1},\nabla\mathbf{v}_h\right)
			+\left(\nabla\delta_\tau e_p^{n+1},\mathbf{v}_h\right)\\
			+\delta_\tau\left(\frac{\rho^{n+1}}{\sqrt{E_1^{n+1}}}\left(\mathbf{u}^{n+1}\cdot\nabla\mathbf{u}^{n+1},\mathbf{v}_h\right)-\frac{\rho_h^n}{\sqrt{E_{1,h}^{n+1}}}\left(\mathbf{u}_h^{n}\cdot\nabla\mathbf{u}_h^{n},\mathbf{v}_h\right)\right)
			+\left(\delta_{\tau\tau}\left(\mathbf{u}^{n+1}-\mathbf{P}_h\mathbf{u}^{n+1}\right),\mathbf{v}_h\right)\\
			+\delta_\tau\left(b\left(\phi_h^{n+1},\mu_h^{n+1},\mathbf{v}_h\right)-b\left(R_h\phi^{n+1},\mu^{n+1},\mathbf{v}_h\right)\right)-
			\left(\delta_\tau R_2^{n+1},\mathbf{v}_h\right)-\left(\delta_\tau R_3^{n+1},\mathbf{v}_h\right)&=0.
		\end{aligned}
	\end{equation}
	We choose the $\mathbf{v}_h=\delta_\tau e_{\tilde{\mathbf{u}}}^{n+1}$ in \eqref{eq_combining_error_eqautions_and_projections_term_difference} to find
	\begin{equation}
		\label{eq_error_estimates_delta_t_e_u}
		\begin{aligned}
				\left(\delta_{\tau\tau} e_{{\mathbf{u}}}^{n+1},\delta_\tau e_{\tilde{\mathbf{u}}}^{n+1}\right)+\nu\left(\nabla\delta_\tau e_{\tilde{\mathbf{u}}}^{n+1},\nabla\delta_\tau e_{\tilde{\mathbf{u}}}^{n+1}\right)
			+\left(\nabla\delta_\tau e_p^{n+1},\delta_\tau e_{\tilde{\mathbf{u}}}^{n+1}\right)-
			\left(\delta_\tau R_2^{n+1},\delta_\tau e_{\tilde{\mathbf{u}}}^{n+1}\right)\\
			+\delta_\tau\left(\frac{\rho^{n+1}}{\sqrt{E_1^{n+1}}}
			\left(\mathbf{u}^{n+1}\cdot\nabla\mathbf{u}^{n+1},\delta_\tau e_{\tilde{\mathbf{u}}}^{n+1}\right)
			-\frac{\rho_h^n}{\sqrt{E_{1,h}^{n+1}}}\left(\mathbf{u}_h^{n}\cdot\nabla\mathbf{u}_h^{n},\delta_\tau e_{\tilde{\mathbf{u}}}^{n+1}\right)\right)\\
			+\left(\delta_{\tau\tau}\left(\mathbf{u}^{n+1}-\mathbf{P}_h\mathbf{u}^{n+1}\right),\delta_\tau e_{\tilde{\mathbf{u}}}^{n+1}\right)\\
			+\delta_\tau\left(b\left(\phi_h^{n+1},\mu_h^{n+1},\delta_\tau e_{\tilde{\mathbf{u}}}^{n+1}\right)-b\left(R_h\phi^{n+1},\mu^{n+1},\delta_\tau e_{\tilde{\mathbf{u}}}^{n+1}\right)\right)-\left(\delta_\tau R_3^{n+1},\delta_\tau e_{\tilde{\mathbf{u}}}^{n+1}\right)&=0.
		\end{aligned}
	\end{equation}
 	Since we can be recast \eqref{eq_error_equations_u_p_cor} into
 	\begin{equation}
 		\delta_\tau e_{\tilde{\mathbf{u}}}^{n+1}=\delta_\tau e_{\mathbf{u}}^{n+1}+\nabla\left(p_h^{n+1}-2p_h^n+p_h^{n-1}\right).
 	\end{equation}
	and obtain to the following result
	\begin{equation}
		\begin{aligned}
			\left(\delta_{\tau\tau}e_{\mathbf{u}}^{n+1},\delta_\tau e_{\tilde{\mathbf{u}}}^{n+1}\right)=\frac{\|\delta_{\tau}e_{\mathbf{u}}^{n+1}\|^2-\|\delta_{\tau}e_{\mathbf{u}}^{n}\|^2+\|\delta_{\tau}e_{\mathbf{u}}^{n+1}-\delta_{\tau}e_{\mathbf{u}}^{n}\|^2}{2\tau}.
		\end{aligned}
	\end{equation}
	Thus, the equation \eqref{eq_error_estimates_delta_t_e_u} can be rewitten as
	\begin{equation}
		\label{eq_error_estimates_delta_t_e_u_rewrite}
		\begin{aligned}
		&\frac{\|\delta_{\tau}e_{\mathbf{u}}^{n+1}\|^2-\|\delta_{\tau}e_{\mathbf{u}}^{n}\|^2+\|\delta_{\tau}e_{\mathbf{u}}^{n+1}-\delta_{\tau}e_{\mathbf{u}}^{n}\|^2}{2\tau}+\nu\|\nabla\delta_\tau e_{\tilde{\mathbf{u}}}^{n+1}\|^2\\
			=&\left(\delta_\tau R_2^{n+1},\delta_\tau e_{\tilde{\mathbf{u}}}^{n+1}\right)
			-\left(\nabla\delta_\tau e_p^{n+1},\delta_\tau e_{\tilde{\mathbf{u}}}^{n+1}\right)\\
			&+\delta_\tau\left(\frac{\rho^{n+1}}{\sqrt{E_1^{n+1}}}
			\left(\mathbf{u}^{n+1}\cdot\nabla\mathbf{u}^{n+1},\delta_\tau e_{\tilde{\mathbf{u}}}^{n+1}\right)
			-\frac{\rho_h^n}{\sqrt{E_{1,h}^{n+1}}}\left(\mathbf{u}_h^{n}\cdot\nabla\mathbf{u}_h^{n},\delta_\tau e_{\tilde{\mathbf{u}}}^{n+1}\right)\right)\\
			&-\left(\delta_{\tau\tau}\left(\mathbf{u}^{n+1}-\mathbf{P}_h\mathbf{u}^{n+1}\right),\delta_\tau e_{\tilde{\mathbf{u}}}^{n+1}\right)\\
			&+\left(\delta_\tau(\nabla R_h\phi^{n+1}\cdot\mu^{n+1}),\delta_\tau e_{\tilde{\mathbf{u}}}^{n+1}\right)-\left(\delta_\tau\left(\nabla\phi_h^{n+1}\cdot\mu_h^{n+1}\right),\delta_\tau e_{\tilde{\mathbf{u}}}^{n+1}\right)
			+\left(\delta_\tau R_3^{n+1},\delta_\tau e_{\tilde{\mathbf{u}}}^{n+1}\right).
		\end{aligned}
	\end{equation}
	We now estimate terms on the right-hand side of \eqref{eq_error_estimates_delta_t_e_u_rewrite} as follows:
	\begin{equation}
		\begin{aligned}
			\left(\delta_\tau R_2^{n+1},\delta_\tau e_{\tilde{\mathbf{u}}}^{n+1}\right)\leq \frac{\nu}{10}\|\nabla\delta_\tau e_{\tilde{\mathbf{u}}}^{n+1}\|^2+C\tau^2
		\end{aligned}
	\end{equation}
	and 
	\begin{equation}
		\begin{aligned}
			\big|-\left(\nabla\delta_\tau e_p^{n+1},\delta_\tau e_{\tilde{\mathbf{u}}}^{n+1}\right)\big|=\big|-\left(\nabla\delta_\tau e_p^{n},\delta_\tau e_{\tilde{\mathbf{u}}}^{n+1}\right)-\left(\nabla\left(\delta_\tau e_p^{n+1}-\delta_\tau e_p^{n}\right),\delta_\tau e_{\tilde{\mathbf{u}}}^{n+1}\right)\big|,
		\end{aligned}
	\end{equation}
	and the term $-\left(\nabla\delta_\tau e_p^{n},\delta_\tau e_{\tilde{\mathbf{u}}}^{n+1}\right)$ is estimated by
	\begin{equation}
		\label{eq_error_estimate_delta_t_p_u_one}
		\begin{aligned}
			\big|-\left(\nabla\delta_\tau e_p^{n},\delta_\tau e_{\tilde{\mathbf{u}}}^{n+1}\right)\big|
			=&\big|-\left(\nabla\delta_\tau e_p^{n},\nabla\left(p_h^{n+1}-2p_h^n+p_h^{n-1}\right)\right)\big|\\
			=&\big|-\tau\left(\nabla\delta_\tau e_p^{n},\nabla\left(\delta_\tau e_p^{n+1}-\delta_\tau e_p^n\right)\right)-\left(\nabla\delta_\tau e_p^{n+1},\nabla\delta_\tau R_4^{n+1} \right)\big|\\
			=&\big|-\frac{\tau}{2}\left(\|\nabla\delta_\tau e_p^{n+1}\|^2-\|\nabla\delta_\tau e_p^{n}\|^2-\|\nabla\delta_\tau e_p^{n+1}-\nabla\delta_\tau e_p^{n}\|^2\right)\\
				&-\left(\nabla\delta_\tau e_p^{n+1},\nabla\delta_\tau R_4^{n+1} \right)\big|\\
			\leq&-\frac{\tau}{2}\left(\|\nabla\delta_\tau e_p^{n+1}\|^2-\|\nabla\delta_\tau e_p^{n}\|^2-\|\nabla\delta_\tau e_p^{n+1}-\nabla\delta_\tau e_p^{n}\|^2\right)\\
				&+C\left(\tau^2\|\nabla\delta_\tau e_p^n\|^2+\tau^2\right)
		\end{aligned}
	\end{equation}
	and the term $-\left(\nabla\left(\delta_\tau e_p^{n+1}-\delta_\tau e_p^{n}\right),\delta_\tau e_{\tilde{\mathbf{u}}}^{n+1}\right)$ is 
	bounded by
	\begin{equation}
		\label{eq_error_estimate_delta_t_p_u_two}
		\begin{aligned}
			&\big|-\left(\nabla\left(\delta_\tau e_p^{n+1}-\delta_\tau e_p^{n}\right),\delta_\tau e_{\tilde{\mathbf{u}}}^{n+1}\right)\big|
				=\big|-\left(\nabla\left(\delta_\tau e_p^{n+1}-\delta_\tau e_p^{n}\right),\nabla\left(p_h^{n+1}-2p_h^n+p_h^{n-1}\right)\right)\big|\\
			=&\big|-\tau\left(\nabla\left(\delta_\tau e_p^{n+1}-\delta_\tau e_p^{n}\right),\nabla\left(\delta_\tau e_p^{n+1}-\delta_\tau e_p^{n}\right)\right)-\left(\nabla\left(\delta_\tau e_p^{n+1}-\delta_\tau e_p^{n}\right),\nabla\delta_\tau R_4^{n+1}\right)\big|\\
			\leq&-\frac{\tau}{2}\|\nabla\delta_\tau e_p^{n+1}-\nabla\delta_\tau e_p^{n}\|^2+C\tau^2.
		\end{aligned}
	\end{equation}
	Combining \eqref{eq_error_estimate_delta_t_p_u_one} with \eqref{eq_error_estimate_delta_t_p_u_two}, we have
	\begin{equation}
		-\left(\nabla\delta_\tau e_p^{n+1},\delta_\tau e_{\tilde{\mathbf{u}}}^{n+1}\right)\leq -\frac{\tau}{2}\left(\|\nabla\delta_\tau e_p^{n+1}\|^2-\|\nabla\delta_\tau e_p^{n}\|^2\right)+C\tau^2.
	\end{equation}
 	The term $\delta_\tau\left(\frac{\rho^{n+1}}{\sqrt{E_1^{n+1}}}
 	\left(\mathbf{u}^{n+1}\cdot\nabla\mathbf{u}^{n+1},\delta_\tau e_{\tilde{\mathbf{u}}}^{n+1}\right)
 	-\frac{\rho_h^n}{\sqrt{E_{1,h}^{n+1}}}
 	\left(\mathbf{u}_h^{n}\cdot\nabla\mathbf{u}_h^{n},\delta_\tau e_{\tilde{\mathbf{u}}}^{n+1}\right)\right)$ 
 	can be bounded by
 	\begin{equation}
 		\begin{aligned}
 			&\bigg|\delta_\tau\left(\frac{\rho^{n+1}}{\sqrt{E_1^{n+1}}}
 			\left(\mathbf{u}^{n+1}\cdot\nabla\mathbf{u}^{n+1},\delta_\tau e_{\tilde{\mathbf{u}}}^{n+1}\right)
 			-\frac{\rho_h^n}{\sqrt{E_{1,h}^{n+1}}}\left(\mathbf{u}_h^{n}\cdot\nabla\mathbf{u}_h^{n},\delta_\tau e_{\tilde{\mathbf{u}}}^{n+1}\right)\right)\bigg|\\
 			=~\bigg|&\frac{1}{\tau}\left(\left(\frac{\rho^{n+1}}{\sqrt{E_1^{n+1}}}
 			\left(\mathbf{u}^{n+1}\cdot\nabla\mathbf{u}^{n+1},\delta_\tau e_{\tilde{\mathbf{u}}}^{n+1}\right)
 			-\frac{\rho^{n}}{\sqrt{E_1^{n}}}
 			\left(\mathbf{u}^{n}\cdot\nabla\mathbf{u}^{n},\delta_\tau e_{\tilde{\mathbf{u}}}^{n+1}\right)\right)\right.
 			\\
 			&\left. -\left(\frac{\rho_h^n}{\sqrt{E_{1,h}^{n+1}}}
 			\left(\mathbf{u}_h^{n}\cdot\nabla\mathbf{u}_h^{n},\delta_\tau e_{\tilde{\mathbf{u}}}^{n+1}\right)
 			-\frac{\rho_h^n}{\sqrt{E_{1,h}^{n+1}}}
 			\left(\mathbf{u}_h^{n-1}\cdot\nabla\mathbf{u}_h^{n-1},\delta_\tau e_{\tilde{\mathbf{u}}}^{n+1}\right)\right)	\right)\bigg|\\
 			\leq~&\frac{\nu}{10}\|\nabla\delta_\tau e_{\tilde{\mathbf{u}}}^{n+1}\|^2+C\left(\mathcal{E}_h^2
 					+\tau^2+\|\nabla e_{\tilde{\mathbf{u}}}^{n+1}\|^2+\|\nabla e_{\tilde{\mathbf{u}}}^{n}\|^2\right).
 		\end{aligned}
 	\end{equation}
 	The error estimate of term $-\left(\delta_{\tau\tau}\left(\mathbf{u}^{n+1}-\mathbf{P}_h\mathbf{u}^{n+1}\right),\delta_\tau e_{\tilde{\mathbf{u}}}^{n+1}\right)$ can be bounded by
 	\begin{equation}
 		\begin{aligned}
 			&\big|-\left(\delta_{\tau\tau}\left(\mathbf{u}^{n+1}-\mathbf{P}_h\mathbf{u}^{n+1}\right),\delta_\tau e_{\tilde{\mathbf{u}}}^{n+1}\right)\big|
 			=\big|-\frac{1}{\tau} \left(\delta_\tau\left(\mathbf{u}^{n+1}-\mathbf{P}_h\mathbf{u}^{n+1}\right)-\delta_\tau\left(\mathbf{u}^{n}-\mathbf{P}_h\mathbf{u}^{n}\right),\delta_\tau e_{\tilde{\mathbf{u}}}^{n+1}\right)\big|\\
 			\leq&~\frac{1}{\tau}\left(\|\delta_\tau\left(\mathbf{u}^{n+1}-\mathbf{P}_h\mathbf{u}^{n+1}\right)\|\|\delta_\tau e_{\tilde{\mathbf{u}}}^{n+1}\|_{L^4}+\|\delta_\tau\left(\mathbf{u}^{n}-\mathbf{P}_h\mathbf{u}^{n}\right)\|\|\delta_\tau e_{\tilde{\mathbf{u}}}^{n+1}\|_{L^4}\right)\\
 			\leq&~\frac{\nu}{10}\|\nabla\delta_\tau e_{\tilde{\mathbf{u}}}^{n+1}\|^2+Ch^{2(r+1)}.
 		\end{aligned}
 	\end{equation}
 	We next estimate the term $\left(\delta_\tau(\nabla R_h\phi^{n+1}\cdot\mu^{n+1}),\delta_\tau e_{\tilde{\mathbf{u}}}^{n+1}\right)-\left(\delta_\tau\left(\nabla\phi_h^{n+1}\cdot\mu_h^{n+1}\right),\delta_\tau e_{\tilde{\mathbf{u}}}^{n+1}\right)$ that
 	\begin{equation}
 		\begin{aligned}
 			&\left(\delta_\tau(\nabla R_h\phi^{n+1}\cdot\mu^{n+1}),\delta_\tau e_{\tilde{\mathbf{u}}}^{n+1}\right)-\left(\delta_\tau\left(\nabla\phi_h^{n+1}\cdot\mu_h^{n+1}\right),\delta_\tau e_{\tilde{\mathbf{u}}}^{n+1}\right)\\
 			=&\left(\delta_\tau(\nabla \left(R_h\phi^{n+1}-\phi_h^{n+1}\right)\cdot\mu^{n+1}),\delta_\tau e_{\tilde{\mathbf{u}}}^{n+1}\right)+\left(\delta_\tau(\nabla \phi_h^{n+1}\cdot\mu^{n+1}),\delta_\tau e_{\tilde{\mathbf{u}}}^{n+1}\right)\\
 			&-\left(\delta_\tau\left(\nabla\phi_h^{n+1}\cdot\mu_h^{n+1}\right),\delta_\tau e_{\tilde{\mathbf{u}}}^{n+1}\right)\\
 			=~&\left(\delta_\tau(\nabla \left(R_h\phi^{n+1}-\phi_h^{n+1}\right)\cdot\mu^{n+1}),\delta_\tau e_{\tilde{\mathbf{u}}}^{n+1}\right)+\left(\delta_\tau(\nabla \phi_h^{n+1}\cdot\left(\mu^{n+1}-\Pi_h\mu^{n+1}\right)),\delta_\tau e_{\tilde{\mathbf{u}}}^{n+1}\right)\\
 			&+\left(\delta_\tau(\nabla \phi_h^{n+1}\cdot \left(\Pi_h\mu^{n+1}-\mu_h^{n+1}\right)),\delta_\tau e_{\tilde{\mathbf{u}}}^{n+1}\right)\\
 			=~&\left(\delta_\tau(\nabla e_\phi^{n+1}\cdot\mu^{n+1}),\delta_\tau e_{\tilde{\mathbf{u}}}^{n+1}\right)
 				+\left(\delta_\tau(\nabla \phi_h^{n+1}\cdot\left(\mu^{n+1}-\Pi_h\mu^{n+1}\right)),\delta_\tau e_{\tilde{\mathbf{u}}}^{n+1}\right)\\
 					&+\left(\delta_\tau(\nabla \phi_h^{n+1}\cdot e_\mu^{n+1}),\delta_\tau e_{\tilde{\mathbf{u}}}^{n+1}\right)\\
 			\leq&\|\delta_\tau\nabla e_\phi^{n+1}\|\|\mu^{n+1}\|_{L^4}\|\delta_\tau e_{\tilde{\mathbf{u}}}^{n+1}\|_{L^4}
 					+\|\nabla\phi_h^{n+1}\|_{L^4}\|\nabla(\delta_\tau\left(\mu^{n+1}-\Pi_h\mu^{n+1}\right))\|\|\delta_\tau e_{\tilde{\mathbf{u}}}^{n+1}\|_{L^4}\\
 				&+\|\phi_h^{n+1}\|_{L^4}\|\delta_\tau\nabla e_{\mu}^{n+1}\|\|\delta_\tau e_{\tilde{\mathbf{u}}}^{n+1}\|_{L^4}\\
 			\leq ~&\frac{\nu}{10}\|\nabla\delta_\tau e_{\tilde{\mathbf{u}}}^{n+1}\|^2+C\left(\|\delta_\tau\nabla e_\phi^{n+1}\|^2+\|\delta_\tau\nabla e_{\mu}^{n+1}\|^2+h^{2(r+1)}+\tau^2\right)
 		\end{aligned}
 	\end{equation}
 	The last term $\left(\delta_\tau R_3^{n+1},\delta_\tau e_{\tilde{\mathbf{u}}}^{n+1}\right)$ can be estimated by
 	\begin{equation}
 		\begin{aligned}
 			\left(\delta_\tau R_3^{n+1},\delta_\tau e_{\tilde{\mathbf{u}}}^{n+1}\right)\leq \frac{\nu}{10}\|\nabla\delta_\tau e_{\tilde{\mathbf{u}}}^{n+1}\|^2+C\tau^2.
 		\end{aligned}
 	\end{equation}
 	Thus, 
 	 using the above error estimates, we can obtain
 	\begin{equation}
 		\label{eq_using_the_above_error_estimates_reslut}
 		\begin{aligned}
 			&\frac{\|\delta_{\tau}e_{\mathbf{u}}^{n+1}\|^2-\|\delta_{\tau}e_{\mathbf{u}}^{n}\|^2+\|\delta_{\tau}e_{\mathbf{u}}^{n+1}-\delta_{\tau}e_{\mathbf{u}}^{n}\|^2}{2\tau}+\frac{\nu}{2}\|\nabla\delta_\tau e_{\tilde{\mathbf{u}}}^{n+1}\|^2
 				+\frac{\tau}{2}\left(\|\nabla\delta_\tau e_p^{n+1}\|^2-\|\nabla\delta_\tau e_p^{n+1}\|^2\right)\\
 			\leq&C\left(\|\nabla e_{\tilde{\mathbf{u}}}^{n+1}\|^2+\|\nabla e_{\tilde{\mathbf{u}}}^n\|^2+\|\delta_\tau\nabla e_{\phi}^{n+1}\|^2+\|\delta_\tau\nabla e_\mu^{n+1}\|^2+\mathcal{E}_h^2+\tau^2\right).
 		\end{aligned}
 	\end{equation}
	Multiplying \eqref{eq_using_the_above_error_estimates_reslut} by  $2\tau$ and summing up for $n$ from $1$ to $m$, we get
	\begin{equation}
		\label{eq_summing_up_the_above_error_estimates_reslut}
		\begin{aligned}
			\|\delta_{\tau}e_{\mathbf{u}}^{m+1}\|^2+&\nu\tau\sum_{n=1}^{m}\|\nabla\delta_\tau e_{\tilde{\mathbf{u}}}^{n+1}\|^2+\tau^2\|\nabla\delta_\tau e_p^{m+1}\|^2\\
			\leq&~\|\delta_{\tau}e_{\mathbf{u}}^{1}\|^2+\tau^2\|\nabla\delta_\tau e_p^{1}\|^2\\
				&+C\tau\sum_{n=1}^{m}\left(\|\nabla e_{\tilde{\mathbf{u}}}^{n+1}\|^2+\|\nabla e_{\tilde{\mathbf{u}}}^n\|^2+\|\delta_\tau\nabla e_{\phi}^{n+1}\|^2+\|\delta_\tau\nabla e_\mu^{n+1}\|^2\right)
				+C\left(h^{2(r+1)}+\tau^2\right).
		\end{aligned}
	\end{equation}
	Recalling the error equation \eqref{eq_combining_error_eqautions_and_projections_term}, on the right-hand side of \eqref{eq_summing_up_the_above_error_estimates_reslut}, the first two terms in time $n=0$ can be estimated by 
	\begin{equation}
		\label{eq_recast_error_equations_n0}
		\begin{aligned}
				\left(\delta_\tau e_{{\mathbf{u}}}^{1},\mathbf{v}_h\right)+\nu\left(\nabla e_{\tilde{\mathbf{u}}}^{1},\nabla\mathbf{v}_h\right)
			+\left(\nabla e_p^{1},\mathbf{v}_h\right)\\
			+\left(\frac{\rho^1}{\sqrt{E_1(\phi^1)}}\left(\mathbf{u}^{1}\cdot\nabla\mathbf{u}^{1},\mathbf{v}_h\right)
			-\frac{\rho_h^0}{\sqrt{E_{1,h}^1}}\left(\mathbf{u}_h^{0}\cdot\nabla\mathbf{u}_h^{0},\mathbf{v}_h\right)\right)
			+\left(\delta_\tau\left(\mathbf{u}^{1}-\mathbf{P}_h\mathbf{u}^{1}\right),\mathbf{v}_h\right)\\
			+\left(b\left(\phi_h^{1},\mu_h^{1},\mathbf{v}_h\right)-b\left(R_h\phi^{1},\mu^{1},\mathbf{v}_h\right)\right)-
			\left(R_2^{1},\mathbf{v}_h\right)-\left(R_3^{1},\mathbf{v}_h\right)&=0.\\
		\end{aligned}
	\end{equation}
	We next choose $\mathbf{v}_h=\delta_\tau e_{\mathbf{u}}^1$ in \eqref{eq_recast_error_equations_n0} and use the $\nabla\cdot e_{\mathbf{u}}^{n+1}=0$ to obtain
	\begin{equation}
		\begin{aligned}
			\|\delta_\tau e_{{\mathbf{u}}}^{1}\|^2=&-\left(\delta_\tau\left(\mathbf{u}^{1}-\mathbf{P}_h\mathbf{u}^{1}\right),\delta_\tau e_{\mathbf{u}}^1\right)-
				\left(b\left(\phi_h^{1},\mu_h^{1},\delta_\tau e_{\mathbf{u}}^1\right)-b\left(R_h\phi^{1},\mu^{1},\delta_\tau e_{\mathbf{u}}^1\right)\right)+
			\left(R_2^{1},\delta_\tau e_{\mathbf{u}}^1\right).
		\end{aligned}
	\end{equation}
	It is easy to show that
	\begin{equation}
		\|\delta_\tau e_{{\mathbf{u}}}^{1}\|^2\leq C\left(\tau^2+h^{2(r+1)}\right).
	\end{equation}
 	 We estimate the term $\tau^2\|\nabla\delta_\tau e_p^{1}\|^2$ and then recast \eqref{eq_error_equations_u_p_cor} into 
 	\begin{equation}
 		\label{eq_error_equations_u_p_cor_recast}
 		\tau^2\nabla\delta_\tau e_p^{n+1}=\tau R_3^{n+1}+e_{\tilde{\mathbf{u}}}^{n+1}-e_\mathbf{u}^{n+1}.
 	\end{equation}
 	Taking the inner product of \eqref{eq_error_equations_u_p_cor_recast} with $\nabla\delta_\tau e_p^{n+1}$
 	when $n=0$  and using the $\nabla\cdot e_{\mathbf{u}}^{n+1}=0$,
 	we get
 	\begin{equation}
 		\begin{aligned}
 			\tau^2\|\nabla\delta_\tau e_p^{1}\|^2=&\left(\tau R_3^{1},\nabla\delta_\tau e_p^{1}\right)+\left(e_{\tilde{\mathbf{u}}}^{1}-e_{\mathbf{u}}^1,\nabla\delta_\tau e_p^{1}\right)\\
 			\leq &~C\left(\|e_{\tilde{\mathbf{u}}}^{1}\|^2+\|e_{\mathbf{u}}^1\|^2+h^{2(r+1)}+\tau^2\right)\leq~C\left(h^{2(r+1)}+\tau^2\right).
 		\end{aligned}
 	\end{equation}
 	Using the Lemma \ref{lemma_discrete_Gronwall_inequation} and substituting the above error estimates into \eqref{eq_summing_up_the_above_error_estimates_reslut}, we can obtain
 	\begin{equation}
 		\begin{aligned}
 			\|\delta_{\tau}e_{\mathbf{u}}^{m+1}\|^2+&\nu\tau\sum_{n=1}^{m}\|\nabla\delta_\tau e_{\tilde{\mathbf{u}}}^{n+1}\|^2+\tau^2\|\nabla\delta_\tau e_p^{m+1}\|^2
 					\leq~C\left(h^{2(r+1)}+\tau^2\right).
 		\end{aligned}
 	\end{equation}
 	Next, the pressure estimate is proved in the following. Recalling the above equation \eqref{eq_combining_error_eqautions_and_projections_term}, we have
 	\begin{equation}
 			\label{eq_combining_error_eqautions_and_projections_term_recast}
 		\begin{aligned}
 			\left(\nabla e_p^{n+1},\mathbf{v}_h\right)=&-\left(\delta_\tau e_{{\mathbf{u}}}^{n+1},\mathbf{v}_h\right)-\nu\left(\nabla e_{\tilde{\mathbf{u}}}^{n+1},\nabla\mathbf{v}_h\right)
 			-\left(\delta_\tau\left(\mathbf{u}^{n+1}-\mathbf{P}_h\mathbf{u}^{n+1}\right),\mathbf{v}_h\right)\\
 			&-\left(\frac{\rho^{n+1}}{\sqrt{E_{1}^{n+1}}}\left(\mathbf{u}^{n+1}\cdot\nabla\mathbf{u}^{n+1},\mathbf{v}_h\right)
 			-\frac{\rho^n}{\sqrt{E_{1,h}^{n+1}}}\left(\mathbf{u}_h^{n}\cdot\nabla\mathbf{u}_h^{n},\mathbf{v}_h\right)\right)\\
 			&-\left(b\left(\phi_h^{n+1},\mu_h^{n+1},\mathbf{v}_h\right)-b\left(R_h\phi^{n+1},\mu^{n+1},\mathbf{v}_h\right)\right)+
 			\left(R_2^{n+1},\mathbf{v}_h\right)+\left(R_3^{n+1},\mathbf{v}_h\right).\\
 		\end{aligned}
 	\end{equation}
 	According to the above error estimates, we get the fact that
 	\begin{equation}
 		\|e_p^{n+1}\|\leq C\sup_{\mathbf{v}_h \in \mathbf{X}_h^{r+1} }\frac{\left(\nabla e_p^{n+1},\mathbf{v}_h\right)}{\|\nabla\mathbf{v}_h\|}.
 	\end{equation} 
 	Thus we derive that
 	\begin{equation}
 		\begin{aligned}
 			\tau\sum_{n=0}^{m}\|e_p^{n+1}\|^2\leq &~C\tau\sum_{n=0}^{m}\left(\|\delta_\tau e_\mathbf{u}^{n+1}\|^2+\|\nabla e_{\tilde{\mathbf{u}}}^{n+1}\|^2
 				+\|e_\mathbf{u}^n\|^2+\|e_\mu^n\|^2+\|\nabla e_\mu^n\|^2\right)\\
 				&+C\tau\sum_{n=0}^{m}\left(\|e_\phi^n\|^2+\|\nabla e_\phi^n\|^2+|e_r^{n+1}|^2\right)+C\left(h^{2(r+1)}+\tau^2\right).
 		\end{aligned}
 	\end{equation}
 	Indrawing to a close, we have completed the proof of the pressure error estimate. Thus, the error estimates of Theorem \ref{theorem_error_estimates_u_phi_mu_r} holds.
	\begin{figure}[htbp]
		\centering
		\includegraphics[width=1\linewidth]{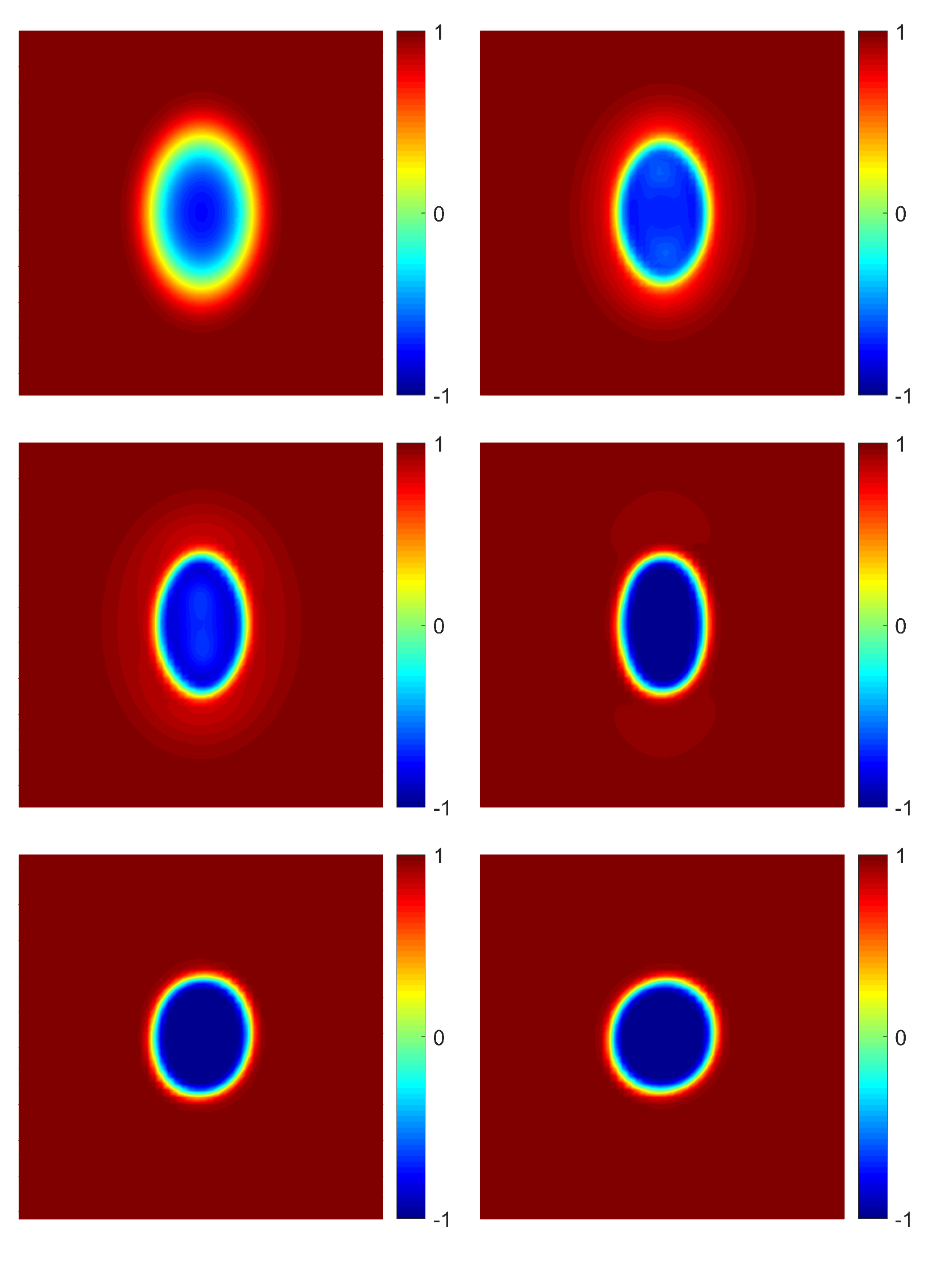}
		\caption{Snapshots of phase function at difference times from left to right row by row with $t=0, 5\!\times\!10^{-6}, 10^{-5}, 5\!\times\!10^{-5}, 5\!\times\!10^{-4}, 10^{-3}$, respectively.}\label{Figure-DegenerateDyna-Elliptical}
	\end{figure}

	\section{Numerical experiments}\label{section_numerical_experiments}
	In this section, three numerical experiments are performed for the corresponding three purposes: 
1)- validating convergence rates in Theorem \ref{theorem_error_estimates_u_phi_mu_r}; 
2)- simulating the dynamics of a elliptical shape fluid, and 
3)- showing numerically the energy stability of our scheme. 
In all tests, we adopt $S^1_h \times S^1_h \times \mathbf{X}^2_h \times S^1_h$ finite element spaces for phase function, chemical potential, velocity, and pressure, respectively; in other words, $P_1$ function space for both $\phi^{n+1}_h$ and $\mu^{n+1}_h$, and $\mathbf{P}_2\times P_1$ for $(\mathbf{u}^{n+1}_h, p^{n+1}_h)$. 
\subsection{Convergence tests}\label{example_CT}
The first test considers the following Cahn–Hilliard–Navier–Stokes equations
\begin{equation}
	\label{eq_chns_recast_ca}
	\left\{\begin{aligned}
		\partial_t \phi+\boldsymbol{u} \cdot \nabla \phi-M \lambda \Delta \mu=0, & & \text { in } \Omega \times[0, T], \\
		\mu+\lambda \Delta \phi-F^{\prime}(\phi)=0, & & \text { in } \Omega \times[0, T], \\
		\partial_t \boldsymbol{u} - \nu \Delta \boldsymbol{u} + \boldsymbol{u} \cdot \nabla \boldsymbol{u} + \nabla p - \mu \nabla \phi=\mathbf{0}, & & \text { in } \Omega \times[0, T], \\
		\nabla \cdot \boldsymbol{u}=0, & & \text { in } \Omega \times[0, T],
	\end{aligned}\right.
\end{equation} 
and the corresponding parameters will be given in the each examples. The modified discrete energy is
\begin{equation}
	\label{eq_modified_discrete_energy}
	\tilde{E}_h^n(\phi,\mathbf{u},p,\rho)=\frac{1}{2}\int_{\Omega}\left(\lambda|\nabla\phi_h^n|^2+|\mathbf{u}_h^n|^2+\tau^2|\nabla p_h^n|^2\right)dx+\lambda|\rho_h^n|^2.
\end{equation}
where $\Omega = [0,1]\times[0,1]$ and time $T = 0.01$, and the physical parameters are selected as $M = 0.1$, $\lambda = 0.04$, $\epsilon = 0.2$ and $\nu = 0.01$. The time-step $\tau$ is set up as $\tau = h^3$ to balance the convergence rates between time and space.
The manufactured solutions are chosen from \cite{2022_ChenYaoyao_CHNS_2022_AMC} and takes the form
$$\left\{
\begin{aligned}
	\phi(t,x,y) &= 2 + \sin(t)\cos(\pi x)\cos(\pi y),  \\
	\mathbf{u}(t,x,y) &= \left[\pi\sin(\pi x)^2\sin(2\pi y)\sin(t), -\pi\sin(\pi y)^2\sin(2\pi x)\sin(t)\right]^{\text{T}},  \\
	p(t,x,y) &= \cos(\pi x)\sin(\pi y)\sin(t),
\end{aligned}\right.
$$
and the exact chemical potential $\mu(t,x,y)$ is obtained by its definition, i.e., 
$$
\begin{aligned}
	\mu(t,x,y) &:= -\lambda \Delta \phi(t,x,y) + F^{\prime}\big(\phi(t,x,y)\big) \\
	&=  -\lambda \Delta \phi(t,x,y) + \frac{1}{\epsilon^2} \left(\phi(t,x,y)^3 - \phi(t,x,y)\right) \\
	&= 2\lambda \pi^2 \cos(\pi x)\cos(\pi y) \sin(t) - \frac{1}{\epsilon^2}\left[\cos(\pi x)\cos(\pi y) \sin(t) - \big(\cos(\pi x)\cos(\pi y) \sin(t) + 2\big)^3 + 2 \right].
\end{aligned}
$$
For simplicity of notations, we adopt the following notations:
$$
\begin{aligned}
	\|\cdot\|_{\ell^{\infty}(L^2)} := \max\limits_{0\leq n \leq N}\|\cdot\|_{L^2}, \quad 
	|\cdot|_{\ell^{\infty}} := \max\limits_{0\leq n \leq N}|\cdot|, \quad \|\cdot\|_{\ell^{2}(L^2)} := \sqrt{\tau \sum_{n=0}^{N}\|\cdot\|_{L^2}}.
\end{aligned}
$$
Therefore, according to Theorem 4.1, the following theoretically optimal orders should be observed:
$$
\begin{aligned}
	\left\|\phi - \phi_h\right\|_{\ell^{\infty}(L^2)} & \approx  \mathcal{O}(h^2), \qquad \left\|\mu - \mu_h\right\|_{\ell^2(L^2)}  \approx  \mathcal{O}(h^2), \\
	\left\|\boldsymbol{u} - \boldsymbol{u}_h\right\|_{\ell^{\infty}(L^2)} & \approx   \mathcal{O}(h^3), \qquad \left\|p - p_h\right\|_{\ell^2(L^2)}  \approx  \mathcal{O}(h^2), \\
	\left\|\nabla(\boldsymbol{u} - \boldsymbol{u}_h)\right\|_{\ell^{\infty}(L^2)} & \approx  \mathcal{O}(h^2), \qquad \left|\rho - \rho_h\right|_{\ell^{\infty}} \approx  \mathcal{O}(\tau).
\end{aligned}
$$
\autoref{L2-error-convRates-FirstOrder} and \autoref{H1-error-convRates-FirstOrder} present the errors and convergence rates, where the temporal orders of SAV are presented by spatial ones after setting $\tau = \mathcal{O}(h^3)$. From those tables, we can conclude that the numerical results are consistent with the above \emph{a prior} rates, and hence the scheme proposed here can indeed obtain the optimal convergence rates.   
\begin{table}[h]
	\centering
	\fontsize{10}{10}
	\begin{threeparttable}
		\caption{$L^2$ errors and convergence orders.}\label{L2-error-convRates-FirstOrder}
		\begin{tabular}{c|c|c|c|c|c|c|c|c}
			\toprule
			\multirow{2.5}{*}{$h$}  & \multicolumn{2}{c}{$\left\|\phi - \phi_h\right\|_{\ell^{\infty}(L^2)}$} & \multicolumn{2}{c}{$\left\|\mu - \mu_h\right\|_{\ell^2(L^2)}$} & \multicolumn{2}{c}{$\left\|\boldsymbol{u} - \boldsymbol{u}_h\right\|_{\ell^{\infty}(L^2)}$} & \multicolumn{2}{c}{$\left\|p - p_h\right\|_{\ell^2(L^2)}$} \cr
			\cmidrule(lr){2-3} \cmidrule(lr){4-5} \cmidrule(lr){6-7}  \cmidrule(lr){8-9}
			& error & rate & error & rate & error & rate & error & rate \cr
			\midrule
			1/4   & 1.7852$-$04 &  -       & 1.2149$-$05  & -       & 5.6137$-$04  & -      & 1.8790$-$03 & -      \cr
			1/8   & 2.1368$-$05 &  3.0626  & 1.3138$-$06  & 3.2091  & 9.8455$-$05  & 2.5114 & 1.9000$-$04 & 3.3059 \cr
			1/16  & 3.1612$-$06 &  2.7569  & 1.8802$-$07  & 2.8048  & 1.1181$-$07  & 3.1384 & 2.3500$-$05 & 3.0153 \cr
			1/32  & 6.6269$-$07 &  2.2541  & 3.6619$-$08  & 2.3602  & 8.1548$-$07  & 3.7773 & 3.8771$-$06 & 2.5996 \cr
			1/64  & 1.6764$-$07 &  1.9829  & 8.9149$-$09  & 2.0383  & 5.4351$-$08  & 3.9073 & 9.1833$-$07 & 2.0779 \cr
			\bottomrule
		\end{tabular}
	\end{threeparttable}
\end{table}
\begin{table}[h]
	\centering
	\fontsize{10}{10}
	\begin{threeparttable}
		\caption{$H^1$ errors and convergence orders for velocity and temporal convergence orders for SAV (presented by spatial orders).}\label{H1-error-convRates-FirstOrder}
		\begin{tabular}{c|c|c|c|c}
			\toprule
			\multirow{2.5}{*}{$h$} & \multicolumn{2}{c}{$\left\|\nabla(\boldsymbol{u} - \boldsymbol{u}_h)\right\|_{\ell^{\infty}(L^2)}$} & \multicolumn{2}{c}{$\left|\rho - \rho_h\right|_{\ell^{\infty}}$} \cr
			\cmidrule(lr){2-3} \cmidrule(lr){4-5}   
			& error & rate & error & rate \cr
			\midrule
			1/4   \qquad&\quad 3.3592$-$02 $\quad$&\quad  -       $\quad$&\quad 3.0022$-$06  $\quad$&\quad -        \cr
			1/8   \qquad&\quad 1.2206$-$02 $\quad$&\quad  1.4604  $\quad$&\quad 4.2887$-$07  $\quad$&\quad 2.8074   \cr
			1/16  \qquad&\quad 2.7855$-$03 $\quad$&\quad  2.1316  $\quad$&\quad 5.5682$-$08  $\quad$&\quad 2.9452   \cr
			1/32  \qquad&\quad 4.0910$-$04 $\quad$&\quad  2.7674  $\quad$&\quad 6.9891$-$09  $\quad$&\quad 2.9940   \cr
			1/64  \qquad&\quad 5.5197$-$05 $\quad$&\quad  2.8898  $\quad$&\quad 8.7406$-$10  $\quad$&\quad 2.9993   \cr
			\bottomrule
		\end{tabular}
	\end{threeparttable}
\end{table}
\begin{figure}[htbp]
	\centering
	\includegraphics[width=1\linewidth]{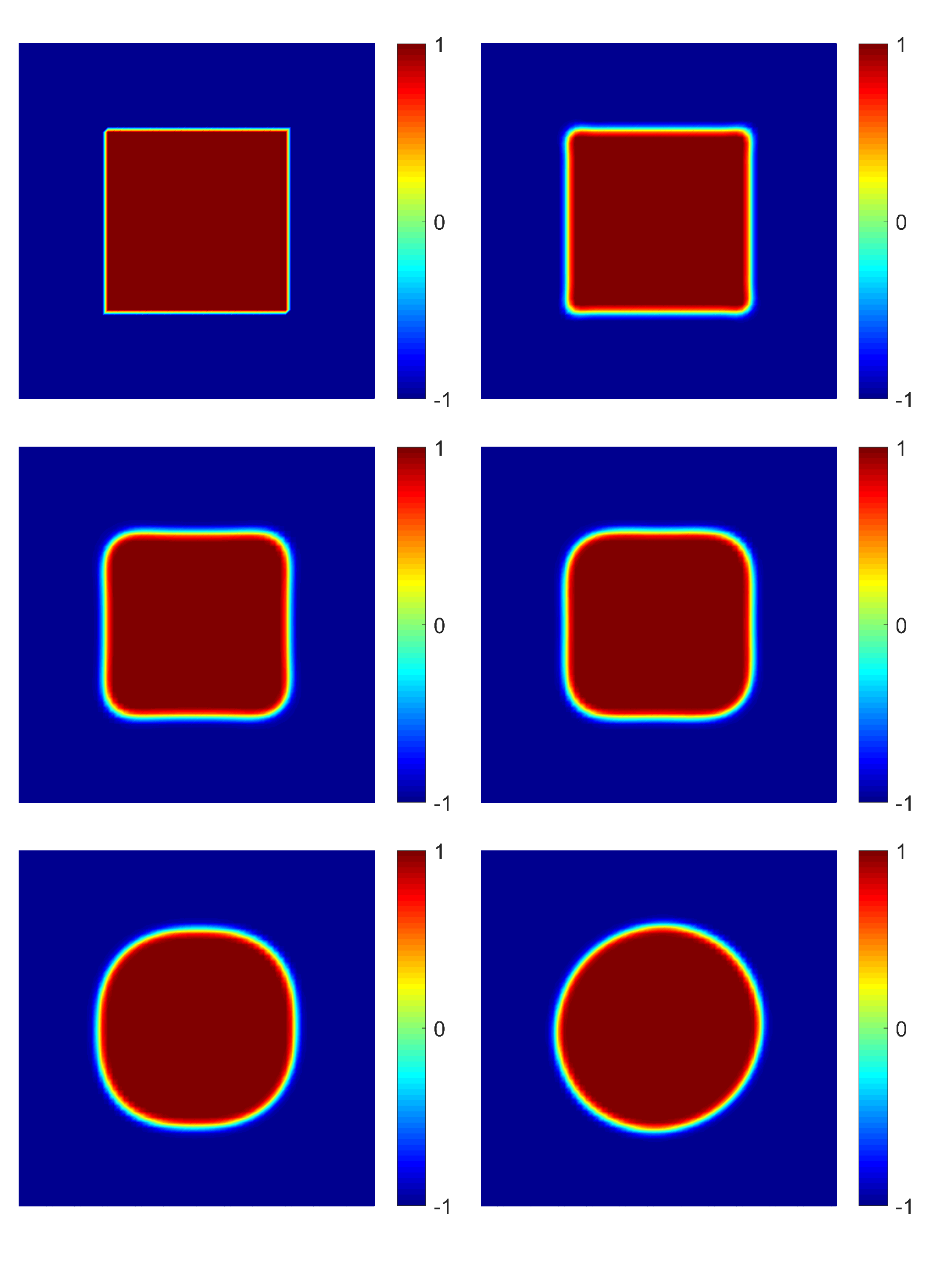}
	\caption{Snapshots of phase function at difference times from left to right row by row with $t=0, 0.001, 0.03, 0.08, 0.3, 1$, respectively.}\label{Figure-DegenerateDyna-Square}
\end{figure}
\subsection{Dynamics of a Elliptical Shape Fluid}\label{example_DESF}
In this example, we consider the Cahn–Hilliard–Navier–Stokes \eqref{eq_chns_recast_ca} to simulate the dynamics of a elliptical shape fluid \cite{2006_FengXiaobing_FullydiscretefiniteelementapproximationsoftheNavierStokesCahnHilliarddiffuseinterfacemodelfortwophasefluidflows}. We choose $\Omega = [-0.4,0.4]^2$, and $M = 0.1$, $\lambda = 0.1$, $\epsilon = 0.01$, and $\nu = 1$, with the initial phase function as
$$
\phi_0 = \tanh\left(\frac{x^2}{0.01} + \frac{y^2}{0.0225} - 1\right), 
$$ 
and the right term $\boldsymbol{f} = [1,0]^{\text{T}}$ is applied to the RHS of the momentum equation. The spatial length $h=1/64$, and $\tau = 10^{-7}$. We run this test up to final time $T=10^{-3}$, and record the snapshots of phase function at $t=0, 5\!\times\!10^{-6}, 10^{-5}, 5\!\times\!10^{-5}, 5\!\times\!10^{-4}, 10^{-3}$, respectively, in \autoref{Figure-DegenerateDyna-Elliptical}. We notice, from this figure, that the elliptical bubble quickly deforms into a circular bubble, which, as expected, can be thought of as approaching the equilibrium state.

\subsection{Dynamics of a Square Shape Fluid}\label{example_DSSF}
In this example, we consider the Cahn–Hilliard–Navier–Stokes \eqref{eq_chns_recast_ca} to simulate the dynamics of a elliptical shape fluid \cite{2020_LiXiaoli_On_a_SAV_MAC_scheme_for_the_Cahn_Hilliard_Navier_Stokes_phase_field_model_and_its_error_analysis_for_the_corresponding_Cahn_Hilliard_Stokes_case_}. We choose $\Omega = [0,1]^2$, and $M = 0.002$, $\lambda = 0.1$, $\epsilon = 0.01$, and $\nu = 1$, with the initial phase function as
$$
\phi_0(x, y)=\left\{\begin{aligned}
	1, \quad  & \text { if } 0.25 \leq x \leq 0.75 ~\& ~0.25 \leq y \leq 0.75, \\
	-1, \quad & \text { if otherwise. }
\end{aligned}\right.
$$
The spatial length $h=1/64$, and $\tau = 10^{-5}$. We run this test up to final time $T=1$, and record the snapshots of phase function at $t=0, 0.001, 0.03, 0.08, 0.3, 1$, respectively, in \autoref{Figure-DegenerateDyna-Square}. We notice, from this figure, that the elliptical bubble quickly deforms into a circular bubble, which, as expected, can be thought of as approaching the equilibrium state.

\indent Moreover, the history curve of the modified energy defined in \eqref{eq_modified_discrete_energy} for example \ref{example_DESF} and in \eqref{eq_modified_discrete_energy} for example \ref{example_DSSF} are showed in \autoref{EnergyStab_Degenerate_Ellip_Squa}. It demonstrates that the modified energy decreases.
\begin{figure}[htbp]
	\centering
	\includegraphics[width=1\linewidth]{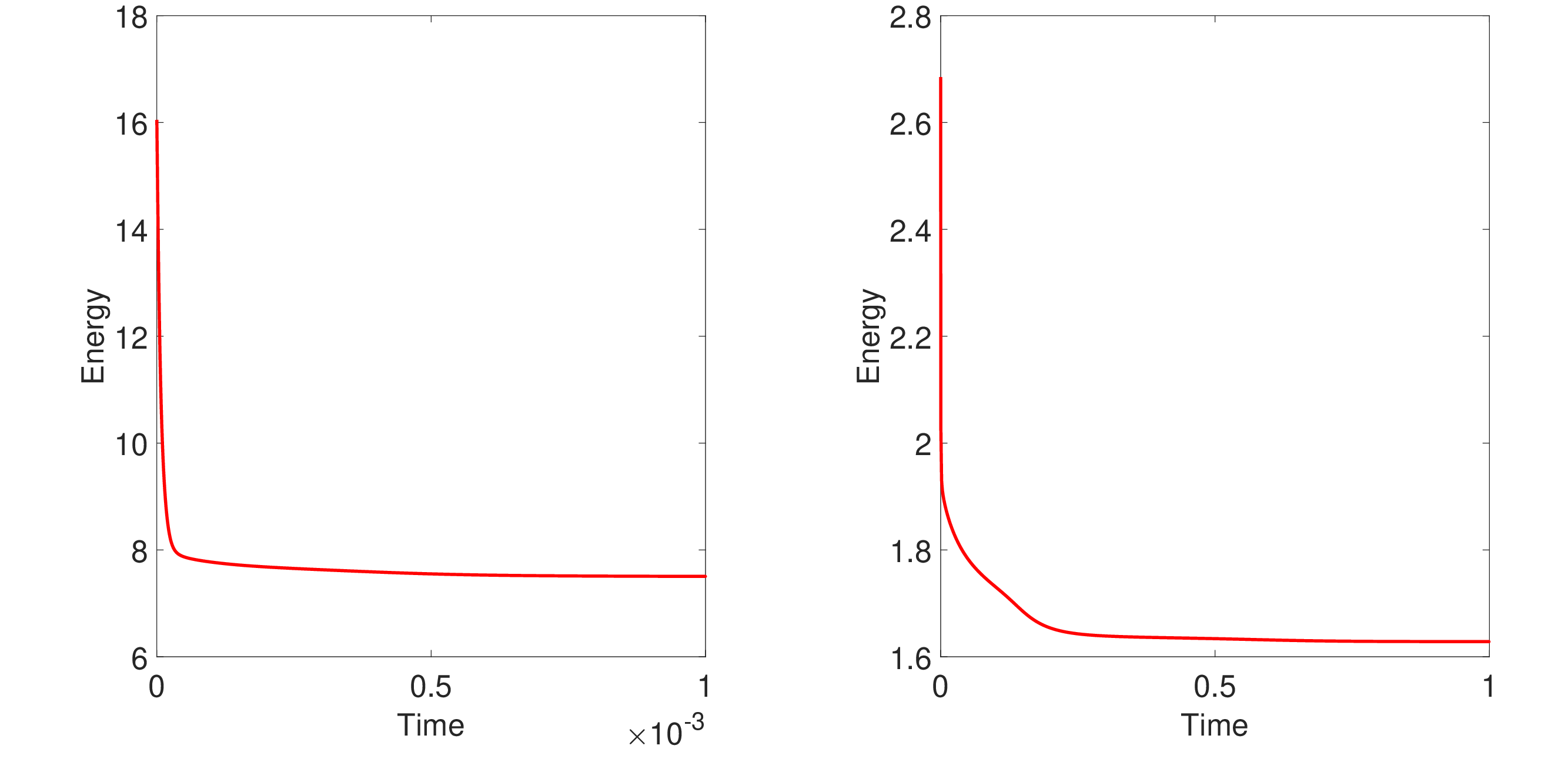}
	\caption{Evolution of the modified energy, left: example \ref{example_DESF}; right: example \ref{example_DSSF} .}\label{EnergyStab_Degenerate_Ellip_Squa}
\end{figure}

	\section{Conclusions}\label{section_conclusion}
	In this work, we  proposed a decoupling method for the CHNS model which is the highly coupled nonlinear system. 
	The method is first-order, fully discrete and unconditional energy stabilization, and is very the efficient and  
	easy to implement.
	Moreover, we carried out a rigorous error analysis for the fully dicrete scheme  and derived optimal error estimates for all relevant functions in $L^2$ norm.
	The above numerical experiments are presented to verify the theoretical results of the our method.
	
	\section*{Acknowledgments}
	This research is supported by the National Natural Science Foundation of China (No.11971337).

\bibliographystyle{unsrt}
\bibliography{../document/bibfile}
\end{document}